\newcommand{\driverOption}{}
  \renewcommand{\driverOption}{pdftex}
  \renewcommand{\driverOption}{dvips}
\newcommand{\hyperrefDriverOption}{}
	\renewcommand{\hyperrefDriverOption}{pdftex}
	\renewcommand{\hyperrefDriverOption}{hypertex}
\tikzset{node_white/.style={circle, draw, fill=white, inner sep=0pt, text width=0pt, text height=0pt, text depth=0pt, minimum size = 3pt}}
\tikzset{node_black/.style={circle, draw, fill=black, inner sep=0pt, text width=0pt, text height=0pt, text depth=0pt, minimum size = 3pt}}
\tikzset{node_circle/.style={circle, draw, fill=white, minimum size = 13pt}}
\tikzset{line_solid/.style={draw, thick}}
\tikzset{line_dashed/.style={draw, thick, dash pattern={on 3pt off 1pt}}}
\tikzset{line_dashdotted/.style={draw, thick, dash pattern={on 3pt off 3pt on 1pt off 3pt}}}
\tikzset{line_dotted/.style={draw, thick, dash pattern={on 1pt off 1pt}}}
	\newcommand{\TM}[1]{\marginpar{\parbox{4cm}{{\small {\bf TM:} #1}}}}
	\newcommand{\SF}[1]{\marginpar{\parbox{4cm}{{\small {\bf SF:} #1}}}}
	\newcommand{\LK}[1]{\marginpar{\parbox{4cm}{{\small {\bf LK:} #1}}}}
	\newcommand{\LS}[1]{\marginpar{\parbox{4cm}{{\small {\bf LS:} #1}}}}
	\newcommand{\TM}[1]{}
	\newcommand{\SF}[1]{}
	\newcommand{\LK}[1]{}
	\newcommand{\LS}[1]{}
\newtheorem{theorem}{Theorem}
\newtheorem{lemma}[theorem]{Lemma}
\newtheorem{proposition}[theorem]{Proposition}
\theoremstyle{definition}
\theoremstyle{remark}
\newtheorem*{claim}{Claim}
\long\def\symbolfootnote[#1]#2{\begingroup
\def\thefootnote{\fnsymbol{footnote}}\footnote[#1]{#2}\endgroup}
\begin{document}

\begin{center}

\LARGE Rainbow cycles in flip graphs
\vspace{2mm}

\Large Stefan Felsner, Linda Kleist, Torsten M\"utze, Leon Sering
\vspace{2mm}

\large
  Institut f\"ur Mathematik \\
  TU Berlin, 10623 Berlin, Germany \\
  {\small\tt \{felsner,kleist,muetze,sering\}@math.tu-berlin.de}
\vspace{5mm}

\small

\begin{minipage}{0.8\linewidth}
\textsc{Abstract.}
The flip graph of triangulations has as vertices all triangulations of a convex $n$-gon, and an edge between any two triangulations that differ in exactly one edge.
An $r$-rainbow cycle in this graph is a cycle in which every inner edge of the triangulation appears exactly $r$ times.
This notion of a rainbow cycle extends in a natural way to other flip graphs.
In this paper we investigate the existence of $r$-rainbow cycles for three different flip graphs on classes of geometric objects:
the aforementioned flip graph of triangulations of a convex $n$-gon, the flip graph of plane trees on an arbitrary set of $n$ points, and the flip graph of non-crossing perfect matchings on a set of $n$ points in convex position.
In addition, we consider two flip graphs on classes of non-geometric objects: the flip graph of permutations of $\{1,2,\dots,n\}$ and the flip graph of $k$-element subsets of $\{1,2,\dots,n\}$.
In each of the five settings, we prove the existence and non-existence of rainbow cycles for different values of $r$, $n$ and~$k$.
\end{minipage}
\end{center}

\vspace{5mm}

\section{Introduction}

Flip graphs are fundamental structures associated with families of geometric objects such as triangulations, plane spanning trees, non-crossing matchings, partitions or dissections.
A classical example is the flip graph of triangulations.
The vertices of this graph $G_n^\tT$ are the triangulations of a convex $n$-gon, and two triangulations are adjacent whenever they differ by exactly one edge.
In other words, moving along an edge of $G_n^\tT$ corresponds to flipping the diagonal of a convex quadrilateral formed by two triangles.
Figure~\ref{fig:triang} shows the graph $G_6^\tT$.

\begin{figure}[htb]
\includegraphics{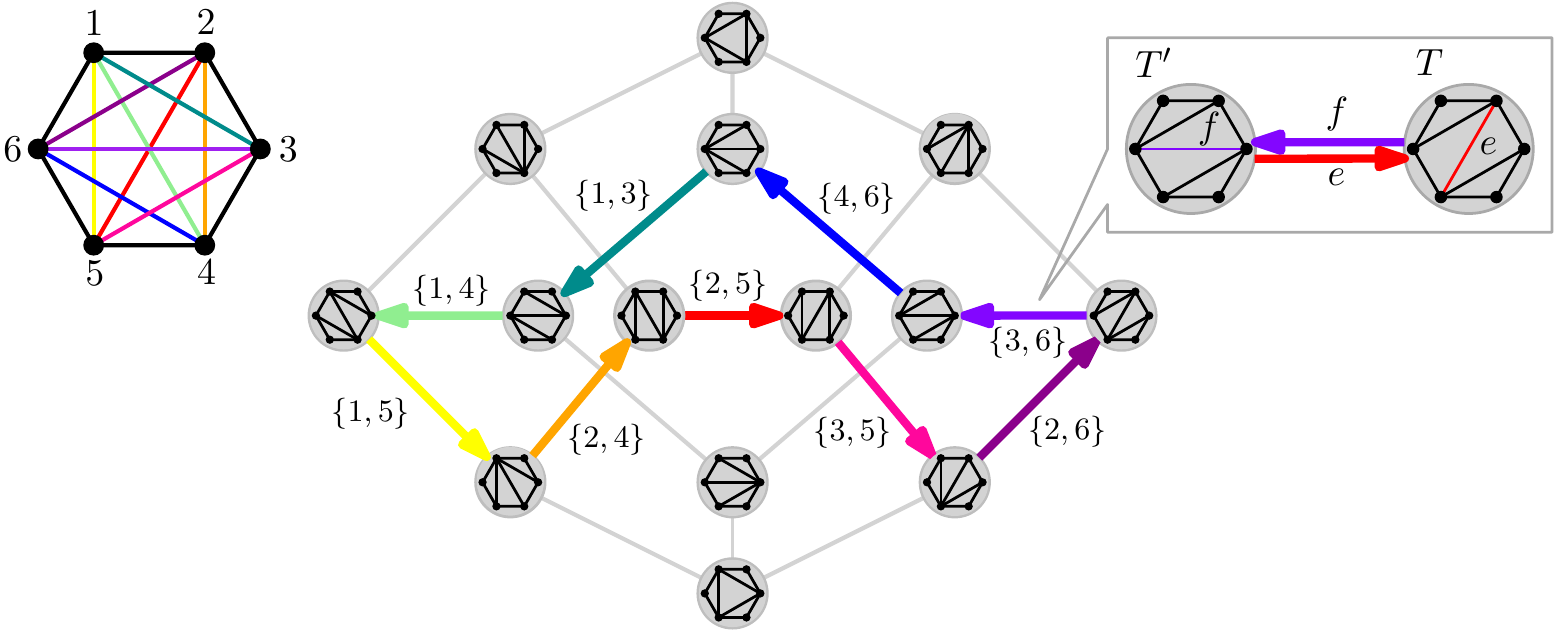}
\caption{The flip graph of triangulations $G_n^\tT$ of a convex $n$-gon for $n=6$, and a rainbow cycle in this graph.}
\label{fig:triang}
\end{figure}

A question that has received considerable attention is to determine the diameter of $G_n^\tT$, i.e., the number of flips that is necessary and sufficient to transform any triangulation into any other; see the survey \cite{MR2455502}.
In a landmark paper \cite{MR928904}, Sleator, Tarjan and Thurston proved that the diameter of $G_n^\tT$ is $2n-10$ for sufficiently large $n$.
Recently, Pournin \cite{MR3197650} gave a combinatorial proof that the diameter is $2n-10$ for all $n>12$.
A challenging algorithmic problem in this direction is to efficiently compute a minimal sequence of flips that transforms two given triangulations into each other; see \cite{MR1744193,DBLP:conf/cocoon/LiZ98}.
These questions involving the diameter of the flip graph become even harder when the $n$ points are not in convex, but in general position; see e.g.~\cite{MR1640808,MR1706610,MR2770955}.
Moreover, apart from the diameter, many other properties of the flip graph $G_n^\tT$ have been investigated, e.g., its realizability as a convex polytope \cite{MR3437894}, its automorphism group \cite{MR1022776}, the vertex-connectivity \cite{MR1723053}, and the chromatic number \cite{MR2535071}.

Another property of major interest is the existence of a Hamilton cycle in $G_n^\tT$.
This was first established by Lucas \cite{MR920505} and a very nice and concise proof was given by Hurtado and Noy \cite{MR1723053}.
The reason for the interest in Hamilton cycles is that a Hamilton cycle in $G_n^\tT$ corresponds to a so-called \emph{Gray code}, i.e., an algorithm that allows to generate each triangulation exactly once, by performing only a single flip operation when moving to the next triangulation.
In general, the task of a Gray code algorithm is to generate all objects in a particular combinatorial class, each object exactly once, by applying only a small transformation in each step, such as a flip in a triangulation.
Combinatorial classes of interest include geometric configurations such as triangulations, plane spanning trees or non-crossing perfect matchings, but also classes without geometric information such as permutations, combinations, bitstrings etc.
This fundamental topic is covered in depth in the most recent volume of Knuth's seminal series \emph{The Art of Computer Programming} \cite{MR3444818}, and in the classical books by Nijenhuis and Wilf \cite{MR510047,MR993775}.
Here are some important Gray code results in the geometric realm:
Hernando, Hurtado and Noy \cite{MR1939072} proved the existence of a Hamilton cycle in the flip graph of non-crossing perfect matchings on a set of $2m$ points in convex position for every even $m\geq 4$.
Aichholzer et al.\ \cite{MR2346418} described Hamilton cycles in the flip graphs of plane graphs on a general point set, for plane and connected graphs and for plane spanning trees on a general point set.
Huemer et al.\ \cite{MR2510231} constructed Hamilton cycles in the flip graphs of non-crossing partitions of a point set in convex position, and for the dissections of a convex polygon by a fixed number of non-crossing diagonals.

As mentioned before, a Hamilton cycle in a flip graph corresponds to a cyclic listing of all objects in some combinatorial class, such that each object is encountered exactly once, by performing a single flip in each step.
In this work we consider the \emph{dual} problem: we are interested in a cyclic enumeration of some of the combinatorial objects, such that each flip operation is encountered exactly once.
For instance, in the flip graph of triangulations $G_n^\tT$, we ask for the existence of a cycle with the property that each inner edge of the triangulation appears (and disappears) exactly once.
An example of such a cycle is shown in Figure~\ref{fig:triang}.
This idea can be formalized as follows.
Consider two triangulations $T$ and $T'$ that differ in flipping the diagonal of a convex quadrilateral, i.e., $T'$ is obtained from $T$ by removing the diagonal $e$ and inserting the other diagonal $f$.
We view the edge between $T$ and $T'$ in the flip graph $G_n^\tT$ as two arcs in opposite directions, where the arc from $T$ to $T'$ receives the label $f$, and the arc from $T'$ to $T$ receives the label $e$, so the label corresponds to the edge of the triangulation that enters in this flip; see the right hand side of Figure~\ref{fig:triang}.
Interpreting the labels as colors, we are thus interested in a directed cycle in the flip graph in which each color appears exactly once, and we refer to such a cycle as a \emph{rainbow cycle}.
More generally, for any integer $r\geq 1$, an \emph{$r$-rainbow cycle} in $G_n^\tT$ is a cycle in which each edge of the triangulation appears (and disappears) exactly $r$ times.
Note that a rainbow cycle does not need to visit all vertices of the flip graph.
Clearly, this notion of rainbow cycles extends in a natural way to all the other flip graphs discussed before; see Figure~\ref{fig:flip}.

\begin{figure}[ht]
\centering
\begin{subfigure}[t]{.45\textwidth}
\centering
 \includegraphics{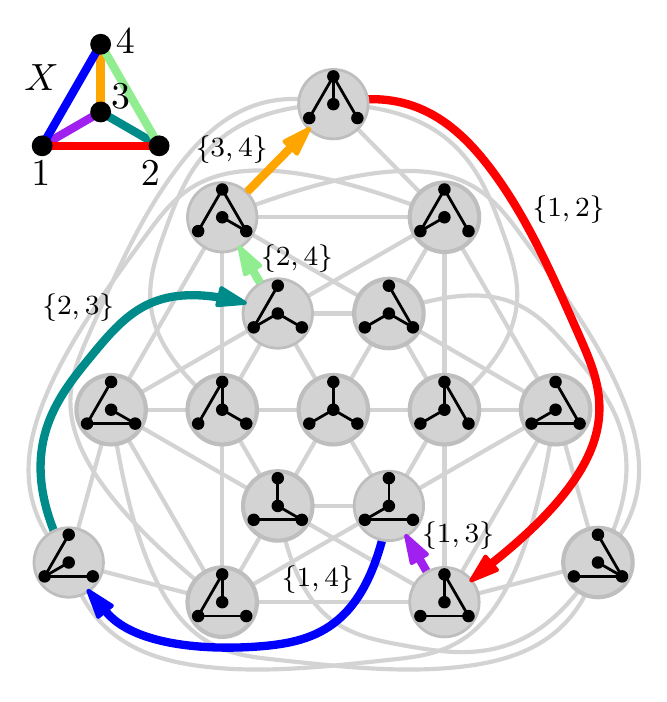}
 \subcaption{Flip graph of plane spanning trees~$G_X^\tS$.}
\end{subfigure}\qquad
\begin{subfigure}[t]{.45\textwidth}
\centering
 \includegraphics{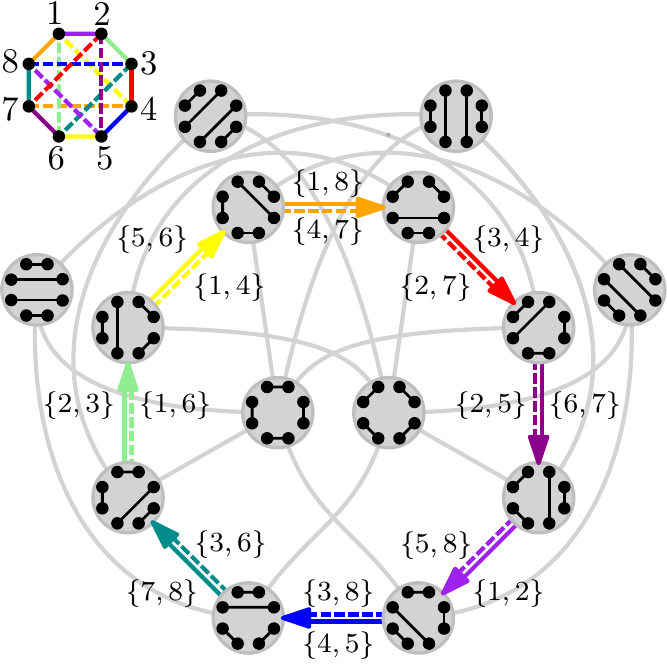}
 \subcaption{Flip graph of non-crossing perfect matchings~$G_4^\tM$.}
\end{subfigure}
\begin{subfigure}[t]{.45\textwidth}
\centering
 \includegraphics{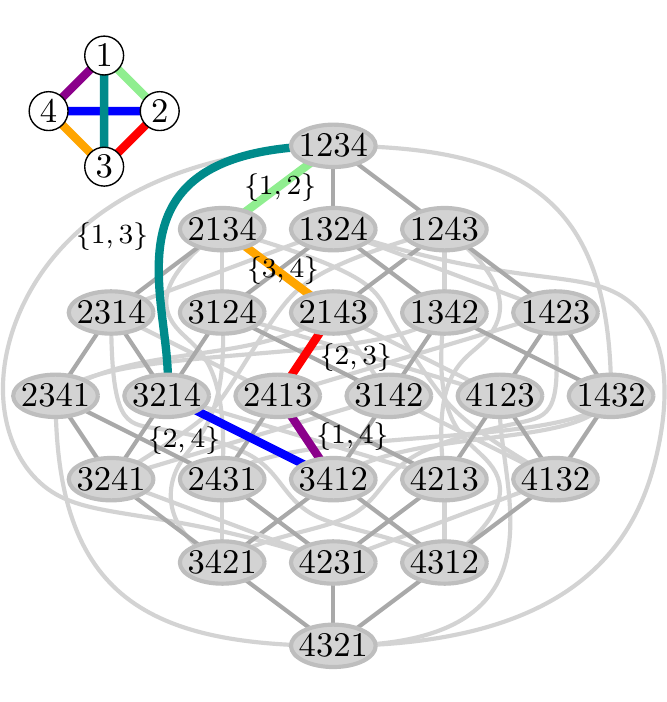}
 \caption{Flip graph of permutations~$G_4^\tP$.}
 \label{fig:permFlip}
\end{subfigure}\qquad
\begin{subfigure}[t]{.45\textwidth}
\centering
 \includegraphics{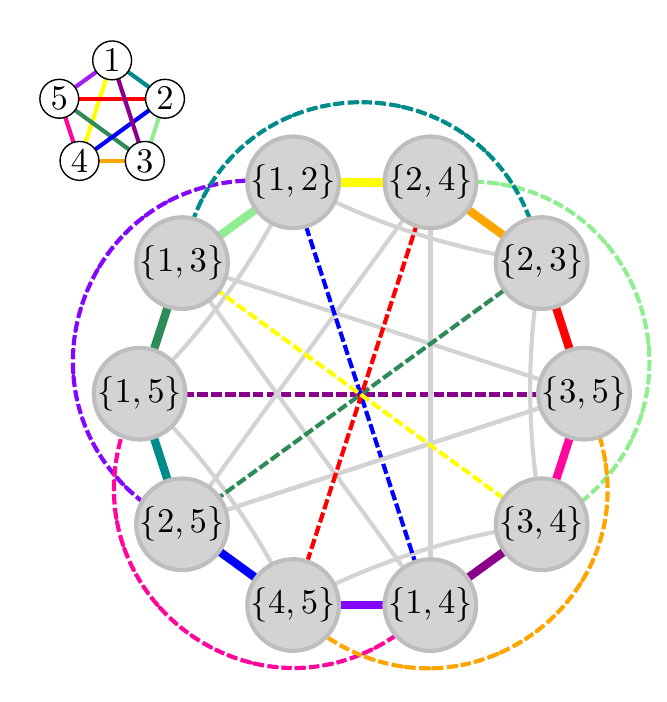}
 \caption{Flip graph  of subsets~$G_{5,2}^\tC$.}
\end{subfigure}
\caption{Examples of flip graphs with 1-rainbow cycles.
In (d), two edge-disjoint rainbow Hamilton cycles in $G_{5,2}^\tC$ are highlighted, one with bold edges and one with dashed edges.}
\label{fig:flip}
\end{figure}

\subsection{Our results}

In this work we initiate the investigation of rainbow cycles in flip graphs for five popular classes of combinatorial objects.
We consider three geometric classes: triangulations of a convex polygon, plane spanning trees on point sets in general position, and non-crossing perfect matchings on point sets in convex position.
In addition, we consider two classes without geometric information: permutations of the set $[n]:=\{1,2,\dots,n\}$, and $k$-element subsets of $[n]$.
We proceed to present our results in these five settings in the order they were just mentioned.
For the reader's convenience, all results are summarized in Table~\ref{tab:results}.

\begin{table}[ht]
\def\arraystretch{1.2}
\setlength{\tabcolsep}{3pt}
\centering
\caption{Overview of results.}
\label{tab:results}
\begin{tabular}{lllll G R l}
\hline
\multirow{12}{*}{\rotatebox[origin=c]{90}{\textsc{geometric}}}
  & \multicolumn{3}{c}{flip graph} &  \multicolumn{3}{c}{existence of $r$-rainbow cycle} & \\ 
  & & vertices & arcs/edges & $r$ & yes & no & \\ \hline
  & $G_n^\tT$ & \multirow[t]{2}{4cm}{triangulations of convex $n$-gon} & edge flip & $1$ & $n\geq 4$  & & Thm.~\ref{thm:triang} \\ 
  & & & & $2$&$n\geq 7$& & \\[10pt]
  & $G_X^\tS$ & \multirow[t]{2}{4cm}{plane spanning trees on point set $X$ in general position} & edge flip & $1,\hspace{-1.5pt}...,|X|-2$ & $|X|\geq 3$ & & Thm.~\ref{thm:trees} \\
  & & & & & & & \\ [10pt]
  & $G_m^\tM$ & \multirow[t]{3}{4cm}{non-crossing perfect matchings on $2m$ points in convex position} & \multirow[t]{2}{2cm}{two edge flip} & $1$ & $m\in\{2,4\}$ &  & Thm.~\ref{thm:match} \\
  & & & & & & \multirow[t]{-2}{2.4cm}{odd $m$, \\ $m\in\{6,8,10\}$} & \\
  & & & & $2$ & $m\in\{6,8\}$ & & \\
  \hdashline
\multirow{7}{*}{\rotatebox[origin=c]{90}{\textsc{abstract}}}
  & $G_n^\tP$ & \multirow[t]{1}{3.2cm}{permutations of $[n]$} & transposition & 1 & $\floor{n/2}$ even & $\floor{n/2}$ odd & Thm.~\ref{thm:perm} \\ [5pt]
  & $G_{n,k}^\tC$ & \multirow[t]{2}{3cm}{$k$-subsets of $[n]$, $2\leq k\leq \lfloor n/2\rfloor$} & \multirow[t]{2}{2cm}{element exchange} &1&  & even $n$ & Thm.~\ref{thm:comb} \\
  & & & & & \multirow[t]{-2}{2cm}{odd $n$ and $k<n/3$} & & \\
  & & \multirow[t]{2}{3cm}{$2$-subsets of $[n]$ for odd $n$} & & 1 & & & \\
  & & & & & & & \\
  & & & & & \multirow[t]{-3}{2.cm}{two edge-disjoint 1-rainbow Ham.\ cycles}& & \\ [5pt] \hline
\end{tabular}
\end{table}

Our first result is that the flip graph of triangulations $G_n^\tT$ defined in the introduction has a 1-rainbow cycle for $n\geq 4$ and a 2-rainbow cycle for $n\geq 7$ (Theorem~\ref{thm:triang} in Section~\ref{sec:triang}).

Next, we consider the flip graph $G_X^\tS$ of plane spanning trees on a point set $X$ in general position; see Figure~\ref{fig:flip}~(a).
We prove that $G_X^\tS$ has an $r$-rainbow cycle for any point set $X$ with at least three points for any $r=1,2,\dots,|X|-2$ (Theorem~\ref{thm:trees} in Section~\ref{sec:trees}).

We then consider the flip graph $G_m^\tM$ of non-crossing perfect matchings on $2m$ points in convex position; see Figure~\ref{fig:flip}~(b).
We exhibit 1-rainbow cycles for $m=2$ and $m=4$ matching edges, and 2-rainbow cycles for $m=6$ and $m=8$.
We also argue that there is no 1-rainbow cycle for $m\in\{6,8,10\}$, and none for any odd $m$.
In fact, we believe that there are no 1-rainbow cycles in $G_m^\tM$ for any $m\geq 5$.
Our results for this setting are summarized in Theorem~\ref{thm:match} in Section~\ref{sec:match}.

Next, we consider the flip graph $G_n^\tP$ of permutations of $[n]$, where an edge connects any two permutations that differ in a transposition, i.e., in exchanging two entries at positions $i$ and~$j$; see Figure~\ref{fig:flip}~(c).
The edges of this graph are colored with the corresponding pairs $\{i,j\}$, and in a 1-rainbow cycle each of the $\binom{n}{2}$ possible pairs appears exactly once.
We prove that $G_n^\tP$ has a 1-rainbow cycle if $\lfloor n/2\rfloor$ is even, and no 1-rainbow cycle if $\lfloor n/2\rfloor$ is odd (Theorem~\ref{thm:perm} in Section~\ref{sec:perm}).

Finally, we consider the flip graph $G_{n,k}^\tC$ of $k$-element subsets of $[n]$, also known as $(n,k)$-combinations, where an edge connects any two subsets that differ in exchanging one element $i$ for another element~$j$, i.e., the symmetric difference of the subsets has cardinality two; see Figure~\ref{fig:flip}~(d).
The edges of this graph are colored with the corresponding pairs $\{i,j\}$, and in a 1-rainbow cycle each of the $\binom{n}{2}$ possible pairs appears exactly once.
As $G_{n,k}^\tC$ is isomorphic to $G_{n,n-k}^\tC$, including the edge-coloring, we assume without loss of generality that $2\leq k\leq \lfloor n/2\rfloor$.
We prove that $G_n^\tC$ has a 1-rainbow cycle for every odd $n$ and $k<n/3$, and we prove that it has no 1-rainbow cycle for any even $n$.
The case $k=2$ is of particular interest, as a 1-rainbow cycle in the flip graph $G_{n,2}^\tC$ is a Hamilton cycle (both the number of subsets and the number of exchanges equal $\binom{n}{2}$).
Moreover, we show that $G_{n,2}^\tC$ even has two edge-disjoint 1-rainbow Hamilton cycles (for odd $n$).
Our results in this setting are summarized in Theorem~\ref{thm:comb} in Section~\ref{sec:comb}.

We conclude in Section~\ref{sec:open} with some open problems.

\subsection{Related work}

Gray codes are named after Frank Gray, a physicist at Bell Labs, who in 1953 patented a simple scheme to generate all $2^n$ bitstrings of length $n$ by flipping a single bit in each step.
This classical inductive construction is now called the \emph{binary reflected Gray code}; see \cite{MR993775} or \cite{MR3444818}.
Since its invention, there has been continued interest in developing binary Gray codes that satisfy various additional constraints, cf.\ the survey by Savage~\cite{MR1491049}.
The existence of a binary Gray code with the property that the bitflip counts in each of the $n$ coordinates are balanced, i.e., they differ by at most 2, was first established by Tootill \cite{tootill:56} (see also \cite{MR1410880}).
When $n$ is a power of two, every bit appears (and disappears) exactly $1/2\cdot 2^n/n=:r$ many times.
This balanced Gray code therefore corresponds to an $r$-rainbow cycle in the corresponding flip graph.
In this light, our results are a first step towards balanced Gray codes for other combinatorial classes.
For 2-element subsets, we indeed construct perfectly balanced Gray codes.


The Steinhaus-Johnson-Trotter algorithm \cite{MR0159764,DBLP:journals/cacm/Trotter62}, also known as `plain changes', is a method to generate all permutations of $[n]$ by adjacent transpositions $i\leftrightarrow i+1$.
More generally, it was shown in \cite{kompelmakher-liskovets} that all permutations of $[n]$ can be generated by any set of transpositions that form a spanning tree on the set of positions $[n]$.
This is even possible under the additional constraint that in every second step the same transposition is applied \cite{MR1201997}.


The generation of $(n,k)$-combinations subject to certain restrictions on admissible exchanges $i\leftrightarrow j$ has been studied widely.
Specifically, it was shown that all $(n,k)$-combinations can be generated with only allowing exchanges of the form $i\leftrightarrow i+1$ \cite{MR737262,MR821383,MR936104}, provided that $n$ is even and $k$ is odd, or $k\in \{0,1,n-1,n\}$.
The infamous \emph{middle levels conjecture} asserts that all $(2k,k)$-combinations can be generated with only exchanges of the form $1\leftrightarrow i$, and this conjecture has recently been proved in \cite{MR3483129,mlc-short:17}.


Rainbow cycles and paths have also been studied in graphs other than flip graphs.
A well-known conjecture in this context due to Andersen \cite{andersen:89} asserts that every properly edge-coloured complete graph on $n$ vertices has a rainbow path of length $n-2$, i.e., a path that has distinct colors along each of its edges.
Progress towards resolving this conjecture was recently made by Alon, Pokrovskiy and Sudakov \cite{alon-rainbow:16}, and Balogh and Molla \cite{balogh-rainbow:17}.

\section{Triangulations}
\label{sec:triang}

In this section we consider a convex $n$-gon on points labeled clockwise by $1,2,\dots,n$, and we denote by $\cT_n$ the set of all triangulations on these points.
The graph $G_n^\tT$ has $\cT_n$ as its vertex set, and an arc $(T,T')$ between any two triangulations $T$ and $T'$ that differ in exchanging the diagonal $e\in T$ of a convex quadrilateral formed by two triangles for the other diagonal $f\in T'$; see Figure~\ref{fig:triang}.
We refer to this operation as a \emph{flip}, and we denote it by $(e,f)$.
Furthermore, we label the arc $(T,T')$ with the edge $f$, so an arc is labelled with the edge that enters the triangulation in this flip.
The set of arc labels of $G_n^\tT$ is clearly $E_n:=\{\{i,j\}\mid j-i>1\}\setminus\{1,n\}$, and we think of these labels as colors.
An $r$-rainbow cycle in $G_n^\tT$ is a directed cycle along which every label from $E_n$ appears exactly $r$ times.
Clearly, the length of an $r$-rainbow cycle equals $r|E_n|=r(\binom{n}{2}-n)$.
For comparison, the number of vertices of $G_n^\tT$ is the $(n-2)$-th Catalan number $\frac{1}{n-1}\binom{2n-4}{n-2}$.
Given an $r$-rainbow cycle, the cycle obtained by reversing the orientation of all arcs is also an $r$-rainbow cycle, as every edge that appears $r$ times also disappears $r$ times.
Here is an interesting interpretation of an $r$-rainbow cycle using the language of polytopes:
The secondary polytope of the triangulations of a convex $n$-gon, called the \emph{associahedron}, has the graph $G_n^\tT$ as its skeleton, and the facets of this polytope are the triangulations with a fixed edge.
Consequently, an $r$-rainbow cycle enters (and leaves) each facet of the associahedron exactly $r$ times.

The following theorem summarizes the results of this section.

\begin{theorem}
\label{thm:triang}
The flip graph of triangulations $G_n^\tT$ has the following properties:
\begin{enumerate}[label=(\roman*)]
\item If $n\geq 4$, then $G_n^\tT$ has a 1-rainbow cycle.
\item If $n\geq 7$, then $G_n^\tT$ has a 2-rainbow cycle.
\end{enumerate}
\end{theorem}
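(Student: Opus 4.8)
The plan is to give explicit constructions in both cases, built from one basic move which I will call a \emph{fan sweep}. For an apex vertex $a$, let $F_a$ denote the fan triangulation with apex $a$, i.e.\ the triangulation whose diagonals are all $\{a,j\}$ with $j\not\equiv a-1,a,a+1\pmod n$. Consecutive fans $F_{a+1}$ and $F_a$ share no diagonal, and $F_{a+1}$ can be transformed into $F_a$ by exactly $n-3$ flips whose entering diagonals are precisely all diagonals incident to $a$; for $a=n$ this is the sweep from $F_1$ that successively flips $\{1,t+1\}\to\{t,n\}$ for $t=n-2,\dots,2$, and all other sweeps are its rotational images. I call this length-$(n-3)$ flip sequence the sweep from $F_{a+1}$ to $F_a$.

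For part~(ii) I would concatenate all $n$ sweeps into the closed walk $F_1\to F_n\to F_{n-1}\to\cdots\to F_2\to F_1$. Since a diagonal $\{i,j\}$ is incident to exactly the two apexes $i$ and $j$, it enters exactly twice along this walk, once during the sweep into $F_i$ and once during the sweep into $F_j$, so every colour appears exactly twice; the total length is $n(n-3)=2\,|E_n|$, as required. The only real content is to verify that the walk is a \emph{simple} cycle: the interior triangulations of the sweep into $F_a$ are ``double fans'' whose two high-degree vertices are $a$ and $a+1$, so the unordered apex pair $\{a,a+1\}$ can be recovered from the triangulation, showing that triangulations from different sweeps differ, while within one sweep the degree of the entering apex is strictly monotone. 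This distinctness argument is exactly where $n\ge 7$ enters: for $n\le 6$ the length $n(n-3)$ already exceeds the Catalan number $C_{n-2}$, so no simple cycle of that length can exist, whereas for $n\ge 7$ the double-fan bookkeeping goes through. I expect this simplicity check to be the main obstacle in~(ii).

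For part~(i) the target length is $\tfrac12 n(n-3)=|E_n|$, exactly half of the full fan tour, so each colour must be charged to only one of its two incident apexes. I would introduce the diagonals in the order: decreasing larger endpoint $L=n,n-1,\dots,3$, and for each $L$ decreasing smaller endpoint, realising the step that introduces $\{s,L\}$ by the flip $\{s+1,L+1\}\to\{s,L\}$ inside the quadrilateral $s,s+1,L,L+1$ (indices mod $n$). Counting admissible pairs gives $(n-3)+\sum_{L=3}^{n-1}(L-2)=\tfrac12 n(n-3)=|E_n|$ steps, and by construction each diagonal enters exactly once, namely when $L$ equals its larger endpoint. The first two blocks, $L=n$ and $L=n-1$, are precisely the sweeps $F_1\to F_n\to F_{n-1}$; after them vertex $n$ has become an ear and remains one.

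I would establish correctness by induction on $n$, the base case $n=4$ being the $2$-cycle $\{1,3\}\rightleftarrows\{2,4\}$. The key structural observation is that once vertex $n$ is an ear at $F_{n-1}$, the remaining blocks $L=n-2,\dots,3$ act entirely inside the convex $(n-1)$-gon on $\{1,\dots,n-1\}$ with the base diagonal $\{1,n-1\}$ frozen, and they reproduce precisely the tail of the construction for the $(n-1)$-gon (that construction with its own initial block removed), lifted by adding the ear. Thus legality of all flips and simplicity of this portion follow from the inductive hypothesis, while the two initial blocks cover the $n-3$ diagonals at $n$, the base $\{1,n-1\}$, and all diagonals at $n-1$, leaving the recursive tail to cover the remaining diagonals of the $(n-1)$-gon. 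The main thing to get right is the interface between the non-eared part (the two initial sweeps, where the degrees of $1$, $n$ and $n-1$ change) and the eared recursive tail, and to confirm that the lift keeps the whole walk simple; I expect this bookkeeping, rather than any single hard idea, to be the crux.
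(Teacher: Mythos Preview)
Your proposal is correct and follows essentially the same approach as the paper: the paper also uses fan triangulations $S_a$ (your $F_a$) and the identical sweep between consecutive fans, concatenates all $n$ sweeps for part~(ii), and for part~(i) gives the same inductive construction---your cycle is literally the reverse orientation of the paper's sequence $X_n$ applied to $S_1$, and your ``remove the initial block, prepend two new sweeps'' induction is exactly the paper's ``replace the terminal $F_{n-1,n-1}$ by $F_{n-1,n},F_{n,n}$'' read backwards. One small caveat on~(ii): the apex-pair recovery from an intermediate double fan uses $\max_{1\le j\le n-4}\{n-3-j,j\}\ge 3$, which only holds for $n\ge 8$, so the paper handles $n=7$ by direct verification rather than by the general bookkeeping you sketch.
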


\begin{proof}
Let $S_i$ be the star triangulation with respect to the point~$i$, i.e., the triangulation where the point~$i$ has degree $n-1$.
To transform $S_1$ into $S_2$ we can use the flip sequence
\begin{equation}
\label{eq:Fn1}
F_{1,n}:=\big((\{1,3\},\{2,4\}),(\{1,4\},\{2,5\}), (\{1,5\},\{2,6\}),\dots,(\{1,n-1\},\{2,n\})\big).
\end{equation}
For any $i=1,2,\dots,n$, let $F_{i,n}$ denote the flip sequence obtained from $F_{1,n}$ by adding $i-1$ to all points on the right-hand side of \eqref{eq:Fn1}.
Here and throughout this proof addition is to be understood modulo $n$ with $\{1,2,\dots,n\}$ as representatives for the residue classes.
Note that $F_{i,n}$ transforms $S_i$ into $S_{i+1}$ for any $i\in[n]$, and all the edges from $E_n$ that are incident with the point~$i+1$ appear exactly once during that flip sequence.
Note also that $F_{i,n}$ has length $n-3$.

We begin proving (ii).
The concatenation $(F_{1,n},F_{2,n},\dots,F_{n,n})$ is a flip sequence which applied to $S_1$ leads back to $S_1$.
Along the corresponding cycle $C$ in $G_n^\tT$, every edge from $E_n$ appears exactly twice.
Specifically, every edge $\{i,j\}\in E_n$ appears in the flip sequences $F_{i-1,n}$ and $F_{j-1,n}$.
It remains to show that $C$ is indeed a cycle, i.e., every triangulation appears at most once.
For this observe that when applying $F_{i,n}$ to $S_i$, then for every $j=1,2,\dots,n-4$, in the $j$-th triangulation we encounter after $S_i$, the point $i$ is incident with exactly $n-3-j$ diagonals, the point $i+1$ is incident with exactly $j$ diagonals, while all other points are incident with at most two diagonals.
We call these triangulations \emph{bi-centered} with the two centers $i$ and $i+1$.
For $n\geq 8$, we have $\max_{1\leq j\leq n-4}\{n-3-j,j\}\geq 3$ and therefore we can determine at least one center $k$.
The other center is either the point $k-1$ or the point $k+1$.
Since only one of these two points is incident to some diagonal, we can identify it as the other center.
Hence for any bi-centered triangulation encountered along $C$, we can uniquely reconstruct in which flip sequence $F_{i,n}$ it occurs.
For $n=7$ it can be verified directly that $C$ is a 2-rainbow cycle. 

\begin{figure}
\centering
\includegraphics{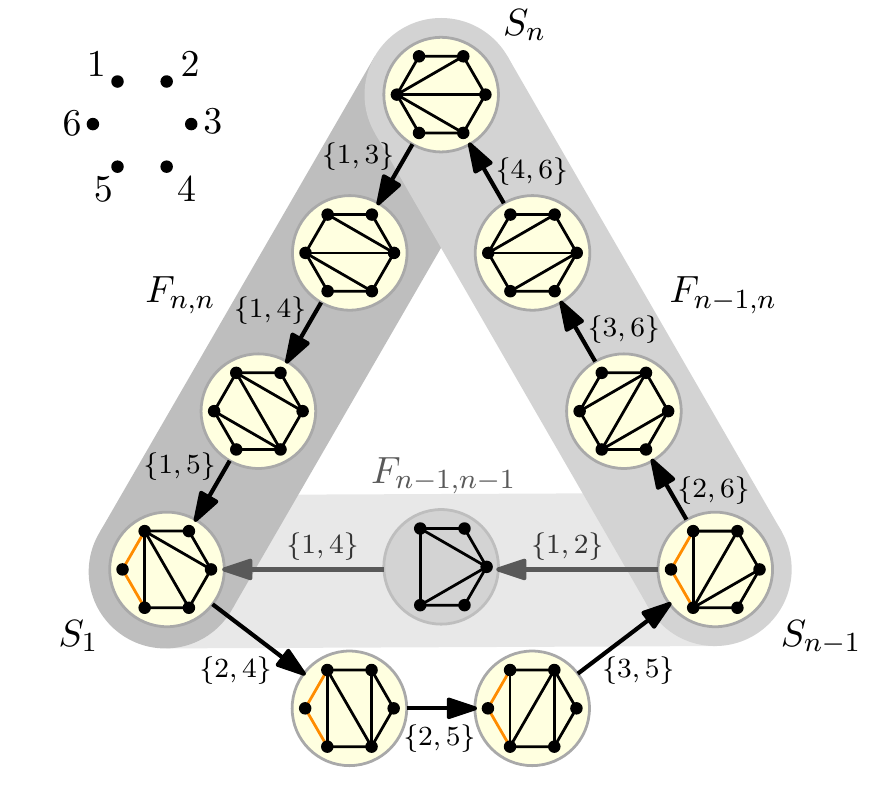} 
\caption{Illustration of the proof of Theorem~\ref{thm:triang}~(i).
The figure shows the inductive construction of a rainbow cycle for triangulations on 6~points from a rainbow cycle for 5~points.}
\label{fig:triang1}
\end{figure}

It remains to prove (i).
For $n\geq 4$, we obtain a 1-rainbow cycle in $G_n^\tT$ by applying the flip sequence $X_n:=(F_{3,4},F_{4,5},F_{5,6},\dots,F_{n-2,n-1},F_{n-1,n},F_{n,n})$ to the triangulation $S_1$. 
Note that $X_n$ differs from $X_{n-1}$ by replacing the terminal subsequence $F_{n-1,n-1}$ by $F_{n-1,n}$ and $F_{n,n}$; see Figure~\ref{fig:triang1}.
By induction, this yields a sequence of length $\big(\binom{n-1}{2}-(n-1)\big)-(n-4)+2(n-3)=\binom{n}{2}-n$.
The fact that $X_n$ produces a rainbow cycle follows by induction, by observing that applying $X_{n-1}$ to $S_1$ in $G_n^\tT$ yields a cycle along which every edge from $E_{n-1}$ appears exactly once.
Moreover, along this cycle the point $n$ is not incident with any diagonals.
The modifications described before to construct $X_n$ from $X_{n-1}$ shorten this cycle in $G_n^\tT$ and extend it by a detour through triangulations where the point $n$ is incident with at least one diagonal, yielding a cycle along which every edge from the following set appears exactly once:
\begin{equation*}
\begin{split}
E_{n-1} \;&\setminus\; \big\{\{1,3\},\{1,4\},\dots,\{1,n-2\}\big\} \\
{}&\cup\; \big\{\{n,2\},\{n,3\},\dots,\{n,n-2\}\big\} \\
{}&\cup\; \big\{\{1,3\},\{1,4\},\dots,\{1,n-2\},\{1,n-1\}\big\} \\
{}&= E_{n-1} \;\cup\; \big\{\{n,2\},\{n,3\},\dots,\{n,n-2\}\big\} \;\cup\;\{1,n-1\} \;=\; E_n.
\end{split}
\end{equation*}

This shows that applying $X_n$ to $S_1$ yields a 1-rainbow cycle in $G_n^\tT$.
\end{proof}

\section{Spanning trees}
\label{sec:trees}
In this section we consider plane spanning trees on a set $X$ of $n$ points in general position, i.e., no three points are collinear.
We use $\cS_X$ to denote the set of all plane spanning trees on~$X$.
The graph $G_X^\tS$ has $\cS_X$ as its vertex set, and an arc $(T,T')$ between any two spanning trees $T$ and $T'$ that differ in replacing an edge $e\in T$ by another edge $f\in T'$; see Figure~\ref{fig:flip}~(a).
We refer to this operation as a \emph{flip}, and we denote it by $(e,f)$.
Furthermore, we label the arc $(T, T')$ with the edge $f$, so an arc is labeled with the edge that enters the tree in this flip.
Note that the entering edge $f$ alone does not determine the flip uniquely (unlike for triangulations).
Clearly, none of the two edges $e$ and $f$ can cross any of the edges in $T \cap T'$, but they may cross each other.
The set of arc labels of $G_X^\tS$ is clearly $E_X:=\binom{X}{2}$, and we think of these labels as colors.
An $r$-rainbow cycle in $G_X^\tS$ is a directed cycle along which every label from $E_X$ appears exactly $r$ times, so it has length $r\binom{n}{2}$.

The following theorem summarizes the results of this setting.

\begin{theorem}
\label{thm:trees}
The flip graph of plane spanning trees $G_X^\tS$ has the following properties:
\begin{enumerate}[label=(\roman*)]
\item For any point set $X$ with $|X|\geq 3$ in general position, $G_X^\tS$ has a 1-rainbow cycle.
\item For any point set $X$ with $|X|\geq 4$ in general position and any $r=2,3,\dots,m$, where $m:=|X|-1$ if $|X|$ is odd and $m:=|X|-2$ if $|X|$ is even, $G_X^\tS$ has an $r$-rainbow cycle.
\end{enumerate}
\end{theorem}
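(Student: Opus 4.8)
My strategy is to mimic the star-triangulation approach from the proof of Theorem~\ref{thm:triang}, but adapted to spanning trees, where I have much more freedom because the entering edge does not determine the flip. The key observation is that for a point set $X$ in general position, a natural family of spanning trees to cycle through is the family of \emph{stars}: for each point $p\in X$, let $S_p$ denote the star that connects $p$ to all other $|X|-1$ points (this is always a plane tree, since edges emanating from a single point never cross). These $|X|$ stars will serve as the ``landmarks'' of the cycle, and I will connect $S_p$ to $S_q$ by a short flip sequence that swaps the center from $p$ to $q$. The crucial bookkeeping is that each edge $\{p,q\}\in\binom{X}{2}$ should be inserted and deleted a controlled number of times across the whole cycle.

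For part~(i), I would first prove a small base case ($|X|=3$, where the flip graph is a triangle and trivially contains a $1$-rainbow cycle) and then argue inductively, adding one point at a time. Alternatively, and more in the spirit of the triangulation proof, I would exhibit an explicit cyclic flip sequence: order the points $1,2,\dots,n$ (say by convex-position angle, or arbitrarily in general position) and, for each $i$, use a flip sequence $F_i$ that transforms $S_i$ into $S_{i+1}$ by, for each point $p\neq i,i+1$, replacing the edge $\{i,p\}$ by $\{i+1,p\}$, then finally replacing $\{i,i+1\}$ appropriately. The total concatenation $(F_1,\dots,F_n)$ returns to $S_1$, and each edge $\{i,j\}$ is inserted exactly once (when the center moves onto one of its endpoints) and deleted exactly once. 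The main thing to verify is that no intermediate tree is repeated, so that this closed walk is genuinely a cycle; here the ``center structure'' of the intermediate trees (one high-degree vertex transitioning to another) plays the role of the bi-centered triangulations in the previous proof and lets me reconstruct the position in the sequence.

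For part~(ii) I would concatenate $r$ suitably shifted copies of the $1$-rainbow construction, or more directly design a single longer closed walk in which each edge appears exactly $r$ times. The freedom in choosing which edge $e$ to delete when inserting $f$ (absent for triangulations) is what makes the larger range of $r$ achievable: I can route through trees of varying shape so that each color is reused $r$ times without prematurely closing the cycle. The parity-dependent bound $m=|X|-1$ or $m=|X|-2$ strongly suggests a counting/feasibility constraint coming from the handshake or degree-sum condition on how many times edges incident to a fixed vertex can be flipped; I expect the upper end of the range to be governed by the fact that the walk length $r\binom{n}{2}$ must be realizable by closed walks respecting the bipartite-like structure of insertions versus deletions, and the extremal value of $r$ is where this just barely remains feasible.

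\textbf{Main obstacle.} The hard part will be part~(ii): verifying that the concatenated or lengthened walk never repeats a vertex (the cycle condition) while simultaneously achieving the \emph{exact} edge-multiplicity $r$ for every color, and pinning down why $r$ cannot exceed the stated parity-dependent bound $m$. Proving the cycle condition for $r=1$ via the center-reconstruction argument is routine, but for general $r$ the intermediate trees are less rigidly structured, so I anticipate needing an additional invariant (beyond ``which vertex is the center'') to distinguish the trees, and I expect the sharpness of the bound on $r$ to require a separate parity or counting argument rather than following from the construction itself.
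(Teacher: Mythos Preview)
Your explicit construction for part~(i) --- cycling through all the stars $S_1,S_2,\dots,S_n,S_1$ via flip sequences $F_i:S_i\to S_{i+1}$ --- does not give a $1$-rainbow cycle: it gives a $2$-rainbow cycle. Along $F_i$ every edge incident to $i+1$ is inserted once, so the edge $\{i,j\}$ is inserted both in $F_{i-1}$ (moving into $S_i$) and in $F_{j-1}$ (moving into $S_j$), i.e.\ exactly twice. This is precisely the analogue of the $2$-rainbow construction in the triangulation proof, not the $1$-rainbow one. The paper's proof of~(i) does follow the inductive route you mention only in passing, but the induction step is not just ``add a point'': one removes a specific star-to-star subpath from the $1$-rainbow cycle on $X\setminus\{n\}$, attaches the edge $\{1,n\}$ to every surviving tree, and then splices in two new star-to-star paths through $S_n$. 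You would need to discover this surgery to make the induction go through.

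For part~(ii), ``concatenate $r$ shifted copies of the $1$-rainbow cycle'' fails immediately because the copies share vertices (e.g.\ the stars). The paper's approach is structurally different from anything in your plan: one takes $r$ edge-disjoint Hamilton cycles in $K_n$ (Walecki's decomposition), orients them to form an Eulerian circuit $\mathcal{E}$ on $[n]$, and then walks through the stars in the order dictated by $\mathcal{E}$, using a designated $S_i\to S_j$ path for each arc $(i,j)$ of $\mathcal{E}$. Since each vertex is visited $r$ times by $\mathcal{E}$, each star would be visited $r$ times; the paper avoids these repetitions by replacing all but one visit to each star with a carefully chosen \emph{detour tree} $D_{i,j,k}$, and a substantial part of the proof is verifying that these detour trees are plane and pairwise distinct. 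This yields a $2r$-rainbow cycle; odd values of $r$ are handled separately by grafting the $1$-rainbow cycle from part~(i) onto a $(2r-2)$-rainbow cycle. The bound $m=2\lfloor (n-1)/2\rfloor$ is exactly the number of Hamilton cycles Walecki provides, so it arises from the construction, not from any obstruction --- the theorem does not assert, and you should not attempt to prove, that $r>m$ is impossible.
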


\subsection{Proof of Theorem~\ref{thm:trees}~(i)}

We label the $n$ points of $X$ with integers $1, 2, \dots, n$ as follows; see Figure~\ref{fig:tree-labels}~(a).
We first label an arbitrary point on the convex hull of $X$ as point $1$, and we then label the points from $2$ to $n$ in counter-clockwise order around $1$ such that $\{1, 2\}$ and $\{1, n\}$ are edges on the convex hull of $X$.

\begin{figure}[htb]
\centering
\begin{tabular}{cc}
\includegraphics{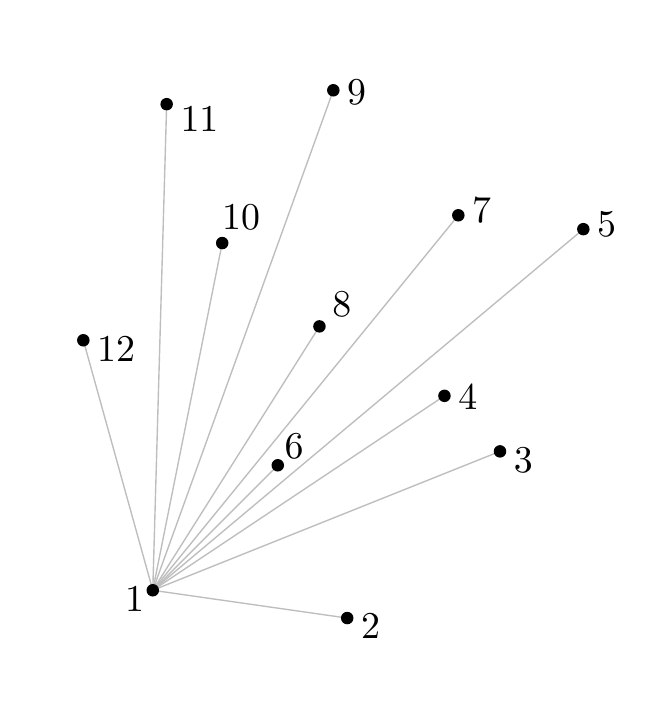}
&
\includegraphics{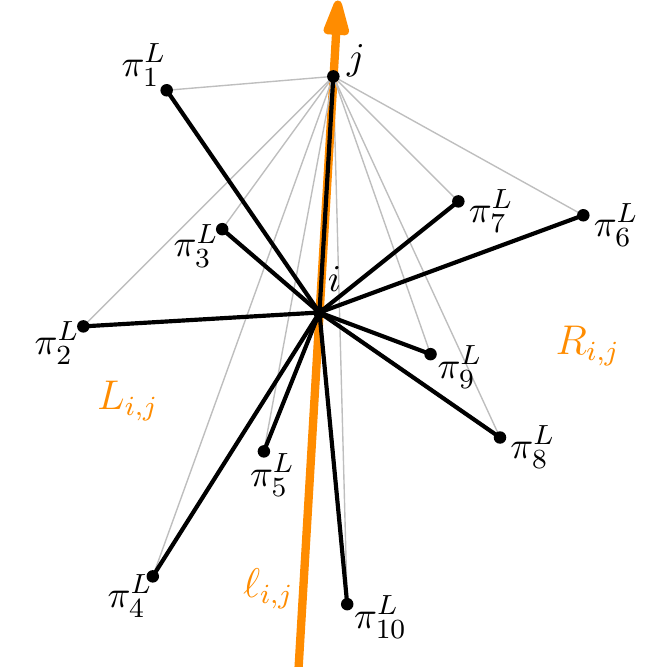}
\\
(a) & (b)
\end{tabular}
\caption{(a) Ordering of points $1,2,\dots,n$.
(b) The point ordering $\pi^L$ for the path from $S_i$ to $S_j$.}
\label{fig:tree-labels}
\end{figure}

Given a graph $G$ that has an edge $e$ but that does not have an edge $f$, we write $G - e$ for the graph obtained from $G$ by removing $e$, and we write $G+f$ for the graph obtained from $G$ by adding~$f$.
Furthermore, for any subset $Y \subseteq X$ and point $i \in Y$ we write $S_i(Y)$ for the tree on $Y$ that forms a star with center vertex $i$.
We write $S_i$ for the star $S_i(X)$; see Figure~\ref{fig:tree-labels}~(b).
For two distinct points $i,j\in[n]$, the directed line from $i$ to $j$ is denoted $\ell_{i,j}$.
The direction allows us to distinguish the left and right half-plane.
Let $L_{i,j}$ be the points of $X$ strictly on the left and $R_{i,j}$ the points of $X$ strictly on the right of the line $\ell_{i,j}$.

We will define two specific flip sequences $F^L_{i,j}$ and $F^R_{i,j}$ that transform the star $S_i$ into the star $S_j$; see Figure~\ref{fig:tree-labels}~(b).
Let $\tau^L$ be the sequence of all points $k$ in $L_{i,j}$ ordered by decreasing clockwise angles $(i,j,k)$.
Similarly, let $\tau^R$ be the sequence of all points $k$ in $R_{i,j}$ ordered by decreasing counter-clockwise angles $(i,j,k)$.
Let $\pi^L :=(\tau^L,\tau^R)$ and $\pi^R:=(\tau^R,\tau^L)$ be the concatenations of these two sequences.

The flip sequence $F^L_{i,j}$ is defined as
\begin{equation*}
F^L_{i,j}:=(f_1,f_2,\ldots,f_{n-1}) \quad \text{were} \quad
f_k:=\begin{cases}
      \big(\{i,j\},\{j,\pi^L_1\}\big) & \text{if } k=1, \\
      \big(\{i,\pi^L_{k-1}\},\{j,\pi^L_k\}\big) & \text{if } 2\leq k\leq n-2, \\
      \big(\{i,\pi^L_{n-2}\},\{j,i\}\big) & \text{if } k=n-1.
     \end{cases}
\end{equation*}
The flip sequence $F^R_{i,j}$ is defined analogously, by using $\pi^R$ instead of $\pi^L$.

Note that in both flip sequences, every edge from $E_X$ that contains the point $j$ appears exactly once.
Furthermore, if $\{i, j\}$ is an edge of the convex hull of $X$, then either $L_{i,j}$ or $R_{i,j}$ is empty and therefore $F^L_{i,j} = F^R_{i,j}$.
Otherwise, these flip sequences differ, as the first flip of $F^L_{i,j}$ adds an edge on the left of $\ell_{i,j}$, while the first flip of $F^R_{i,j}$ adds an edge on the right of $\ell_{i,j}$.

Clearly, each of the flip sequences $F^L_{i,j}$ and $F^R_{i,j}$ yields a path from $S_i$ to $S_j$ in the graph $G_X^\tS$, i.e., every flip adds an edge which is not in the tree and removes one which is in the tree, and the trees along the path are distinct plane spanning trees.
We denote the paths from $S_i$ to $S_j$ in the graph $G_X^\tS$ obtained from the flip sequences $F^L_{i,j}$ and $F^R_{i,j}$ by $P^L_{i,j}$ and $P^R_{i,j}$, respectively.
We refer to the trees along these paths other than $S_i$ and $S_j$ as \emph{intermediate trees}.
Note that there are $n-2$ intermediate trees along each of the paths $P^L_{i,j}$ and $P^R_{i,j}$.

\begin{proof}[Proof of Theorem~\ref{thm:trees}~(i)]
We prove the following stronger statement by induction on $n$:
For any point set $X=[n]$ of $n \geq 3$ points in general position, there is a $1$-rainbow cycle in $G_X^\tS$ that contains the subpath $P^L_{n,1}$.
Recall that the edge $\{1,n\}$ lies on the convex hull of~$X$ and therefore $P^L_{n,1}=P^R_{n,1}$.

\begin{figure}
\centering
\includegraphics{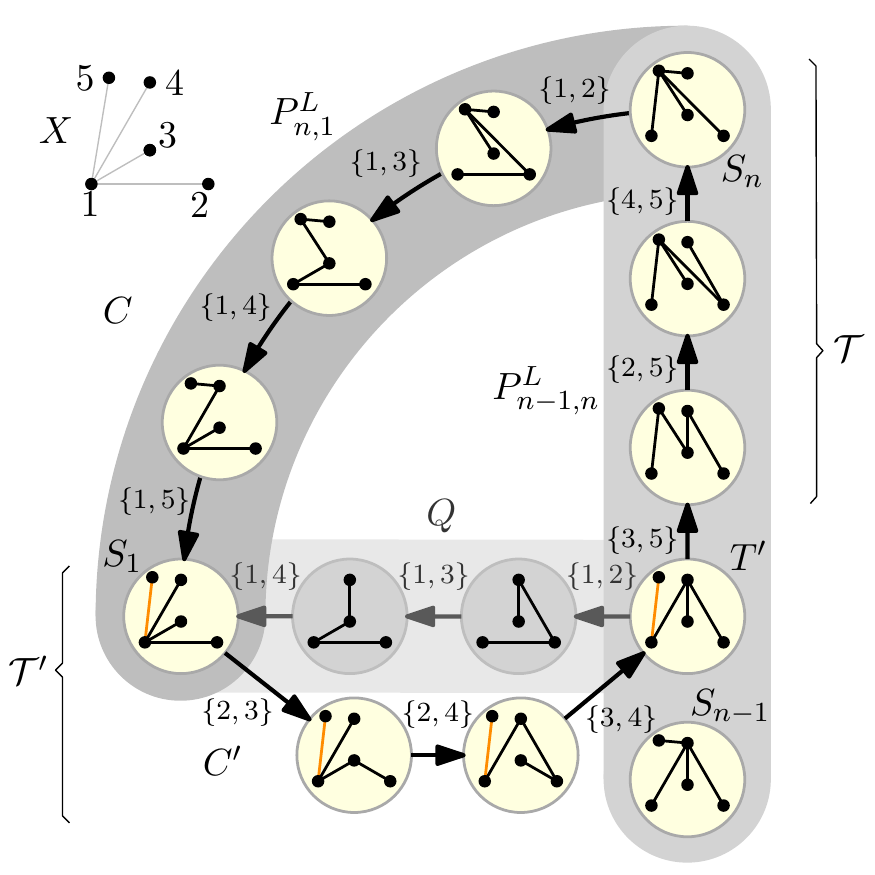} 
\caption{Illustration of the proof of Theorem~\ref{thm:trees}~(i).
In the induction step, the rainbow cycle from Figure~\ref{fig:flip}~(a) on the point set $X\setminus\{5\}=[4]$ is extended to a rainbow cycle on the point set $X=[5]$.}
\label{fig:trees1}
\end{figure}

To settle the base case $n=3$ we take the cycle $(S_1, S_3, S_2)$.

The following inductive construction is illustrated in Figure~\ref{fig:trees1}.
For the induction step let $C'$ be the rainbow cycle for the point set $X':=X\setminus\{n\}=[n-1]$, and let $Q$ be the subpath $P^L_{n-1,1}=P^R_{n-1,1}$ of $C'$.
We obtain the desired rainbow cycle $C$ for the point set $X=[n]$ as follows:
We remove all intermediate trees on the path $Q$ from $C'$, and we add the edge $\{1,n\}$ to all remaining trees, so that all these trees are spanning trees on the point set $X$.
The resulting path in $G_X^\tS$ starts at $S_1(X)=S_1(X')+\{1,n\}$ and ends at $T':=S_{n-1}(X')+\{1,n\}$.
Note that $T'$ is the first intermediate tree on the path $P^L_{n-1,n}$, so we continue the cycle $C$ from $T'$ along this path until we reach the star $S_n(X)$, and from there we complete the cycle $C$ along the path $P^L_{n,1}=P^R_{n,1}$ back to the star $S_1(X)$.
By construction, $C$ contains the required subpath.

We now argue that $C$ does not visit any spanning tree twice.
Let $\cT'$ denote the set of trees on $C'$ except the intermediate trees on $Q$, and let $\cT$ denote the set of trees on the paths $P^L_{n-1,n}$ and $P^L_{n,1}$ except the trees $S_{n-1}(X)$, $T'$ and $S_1(X)$.
Note that all trees in $\cT'$ contain the edge $\{1,n\}$ and the point $n$ has degree~1, whereas all trees in $\cT$ do not contain the edge $\{1,n\}$ or the point $n$ has degree at least~2.
It follows that $\cT'\cap\cT=\emptyset$.
The intermediate trees on the path $P^L_{n-1,n}$ do not contain the edge $\{1,2\}$, whereas the intermediate trees on the path $P^L_{n,1}$ do contain this edge, so no two intermediate trees of these paths are the same.
We conclude that $C$ does not visit any tree twice.

By construction, along the cycle $C$ every edge from the following set appears exactly once:
\begin{equation*}
\begin{split}
 E_{X'} \;&\setminus\; \big\{\{1,2\},\{1,3\},\dots,\{1,n-1\}\big\} \\
 {}&\cup\; \big\{\{n,2\},\{n,3\},\dots,\{n,n-1\}\big\} \\
 {}&\cup\; \big\{\{1,2\},\{1,3\},\dots,\{1,n-1\},\{1,n\}\big\} \\
 {}&= E_{X'} \;\cup\; \big\{\{n,2\},\{n,3\},\dots,\{n,n-1\}\big\} \;\cup\;\{1,n\} \;=\; E_{X}.
\end{split}
\end{equation*}
This shows that $C$ is a 1-rainbow cycle in $G_X^\tS$.
\end{proof}

\subsection{Proof of Theorem~\ref{thm:trees}~(ii)}

The next lemma explicitly describes all intermediate trees along the paths $P^L_{i,j}$ and $P^R_{i,j}$.
It is an immediate consequence of the definition of the flip sequences given in the previous section.

A \emph{caterpillar} is a tree that has the property that when removing all leafs, the remaining graph is a path.
We refer to any path that is obtained from a caterpillar by removing a number of leafs as a \emph{central path} of the caterpillar.
Note that all other vertices not on a central path are leafs of degree~1.
Consequently, specifying the degree sequence of the vertices on a central path of a caterpillar describes the caterpillar uniquely.
Note that a caterpillar may have several different central paths, e.g.\ the caterpillar with central path $(a,b,c,d)$ and degree sequence $(1,4,4,1)$ can also be described via the central path $(b,c)$ and the degree sequence $(4,4)$.

\begin{lemma}
\label{lem:intermediate}
Let $n\geq 3$.
For any two points $i,j\in[n]$ and any $1\leq t\leq n-2$, the $t$-th intermediate tree on the path $P^L_{i,j}$ from $S_i$ to $S_j$ is a caterpillar and $(i,\pi^L_t,j)$ is a central path of the caterpillar with degree sequence $(n-1-t,2,t)$.
An analogous statement holds for all intermediate trees on the path $P^R_{i,j}$.
\end{lemma}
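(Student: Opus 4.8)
The plan is to prove the statement by directly tracking the edge set of the tree produced after each of the first $t$ flips of the sequence $F^L_{i,j}$, since Lemma~\ref{lem:intermediate} is pure bookkeeping once the flips are written out. Recall that $S_i$ consists of all edges $\{i,k\}$ with $k\neq i$, that $\pi^L=(\pi^L_1,\dots,\pi^L_{n-2})$ lists the points of $[n]\setminus\{i,j\}$, and that the flip $f_1$ deletes $\{i,j\}$ and inserts $\{j,\pi^L_1\}$, while each $f_k$ with $2\leq k\leq n-2$ deletes $\{i,\pi^L_{k-1}\}$ and inserts $\{j,\pi^L_k\}$.

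First I would establish, by induction on $t$, the explicit claim that the $t$-th intermediate tree consists exactly of the edges $\{i,\pi^L_s\}$ for $t\leq s\leq n-2$ together with the edges $\{j,\pi^L_s\}$ for $1\leq s\leq t$. The base case $t=1$ is immediate from the effect of $f_1$, which removes only $\{i,j\}$ and adds $\{j,\pi^L_1\}$. For the induction step, note that passing from the $t$-th to the $(t+1)$-th tree applies $f_{t+1}$, which deletes the present edge $\{i,\pi^L_t\}$ and inserts $\{j,\pi^L_{t+1}\}$; this moves the point $\pi^L_t$ from the $i$-part to the $j$-part and exactly reproduces the claimed edge set for index $t+1$. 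As a sanity check, the terminal flip $f_{n-1}$ then deletes the last remaining $i$-edge $\{i,\pi^L_{n-2}\}$ and inserts $\{i,j\}$, correctly terminating at the star $S_j$.

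From this explicit edge set the degrees can be read off at once: the center $i$ retains its $n-1-t$ edges to $\pi^L_t,\dots,\pi^L_{n-2}$, the center $j$ carries its $t$ edges to $\pi^L_1,\dots,\pi^L_t$, the single shared point $\pi^L_t$ is adjacent to both $i$ and $j$ and so has degree $2$, and every remaining point $\pi^L_s$ with $s\neq t$ is a leaf attached to exactly one of $i$ or $j$. Deleting all these leaves leaves only the path $(i,\pi^L_t,j)$, so the tree is a caterpillar, and retaining the triple $i,\pi^L_t,j$ exhibits $(i,\pi^L_t,j)$ as a central path with degree sequence $(n-1-t,2,t)$, as claimed. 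The statement for $P^R_{i,j}$ follows verbatim by substituting $\pi^R$ for $\pi^L$. I do not anticipate a genuine obstacle; the only point requiring a little care is the two extreme indices $t=1$ and $t=n-2$, where $j$ respectively $i$ is itself a leaf, but since a central path may retain leaves at its ends, the triple $(i,\pi^L_t,j)$ still qualifies.
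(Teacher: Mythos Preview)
Your proof is correct and is precisely the routine unfolding of the flip definitions that the paper has in mind; the paper itself gives no argument beyond declaring the lemma ``an immediate consequence of the definition of the flip sequences.'' Your explicit edge-set description and the remark about the boundary cases $t=1$ and $t=n-2$ (where one of $i,j$ is itself a leaf but the triple still qualifies as a central path) are exactly right.
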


The next lemma asserts that the intermediate trees along any two paths obtained from our flip sequences are all distinct.

\begin{lemma}
\label{lem:Pij-disjoint}
Let $n \geq 6$.
For any two paths $P \in \{P^L_{i,j}, P^R_{i,j}\}$ and $P' \in \{P^L_{i',j'}, P^R_{i',j'}\}$ with $\{i, j\} \not = \{i', j'\}$, all intermediate trees on $P$ and $P'$ are distinct.
Equivalently, $P$ and $P'$ are internally vertex-disjoint paths in the graph $G_X^\tS$.
\end{lemma}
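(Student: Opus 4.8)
The plan is to show that the unordered pair $\{i,j\}$ can be read off from any single intermediate tree, without knowing which of the four paths it came from. This suffices for the lemma: if some intermediate tree of a path in $\{P^L_{i,j},P^R_{i,j}\}$ coincided with an intermediate tree of a path in $\{P^L_{i',j'},P^R_{i',j'}\}$, then applying the reconstruction to this common tree would return both $\{i,j\}$ and $\{i',j'\}$, forcing $\{i,j\}=\{i',j'\}$. Hence whenever $\{i,j\}\neq\{i',j'\}$ all intermediate trees are distinct, which is exactly the asserted internal vertex-disjointness of the paths.

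First I would extract the precise combinatorial shape of an intermediate tree from Lemma~\ref{lem:intermediate}. The $t$-th intermediate tree $T$ on $P^L_{i,j}$ has $i$ adjacent to $\pi^L_t,\dots,\pi^L_{n-2}$ and $j$ adjacent to $\pi^L_1,\dots,\pi^L_t$, so that $\pi^L_t$ is the unique common neighbour of $i$ and $j$ and every remaining vertex is a leaf attached to exactly one of $i$ or $j$; the degrees are $\deg i=n-1-t$, $\deg j=t$, and $\deg\pi^L_t=2$ (the same holds on $P^R_{i,j}$ with $\pi^R$ in place of $\pi^L$). In particular the only possible non-leaf vertices of $T$ lie in $\{i,\pi^L_t,j\}$, and since $\pi^L_t$ is always a non-leaf while $n\ge 6$ forces $\max(n-1-t,t)\ge 3$, deleting all leaves of $T$ leaves a path $\sigma(T)$ on exactly two or three vertices.

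The reconstruction then splits according to $|\sigma(T)|$. If $\sigma(T)$ is a three-vertex path, we are in the generic range $2\le t\le n-3$, both centers survive leaf-deletion, $\sigma(T)=i-\pi^L_t-j$, and I would return the two endpoints of $\sigma(T)$ as $\{i,j\}$. If $\sigma(T)$ is a single edge, then $t\in\{1,n-2\}$ and $T$ is a broom: one center survives as a spine vertex of degree $n-2$, the connector $\pi^L_t$ survives as a spine vertex of degree $2$, and the second center is a leaf pendant on the connector. Here I would identify the surviving center as the spine vertex of larger degree, the connector as the other spine vertex, and recover the missing center as the unique leaf-neighbour of the connector. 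In both cases the output is $\{i,j\}$, which completes the argument.

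The step I expect to be delicate is precisely this extreme case, and it is where the hypothesis $n\ge 6$ is used twice. On one hand, $n\ge 6$ guarantees that at least one center has degree $\ge 3$, so $T$ is never a bare path; this rules out the degenerate caterpillars whose central path is ambiguous and for which the ``endpoints of $\sigma(T)$'' rule could misfire. On the other hand, in the broom case the genuine center has degree $n-2\ge 4$ while the connector has degree $2$, so the two spine vertices are distinguished purely by degree and the pendant center is unambiguously located; without such a degree gap (as happens for small $n$) a broom of the pair $\{i,j\}$ can literally coincide with an intermediate tree of a different pair, so some lower bound on $n$ is genuinely necessary. Verifying that the two cases are exhaustive and mutually consistent across all $1\le t\le n-2$ is then the only routine bookkeeping that remains.
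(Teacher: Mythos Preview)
Your argument is correct and follows essentially the same strategy as the paper's proof: reconstruct the unordered pair $\{i,j\}$ from any intermediate tree using the caterpillar description of Lemma~\ref{lem:intermediate}. The paper phrases the case split slightly differently---it looks directly at whether $T$ has one or two vertices of degree at least~$3$, and in the one-center case recovers the other center as the unique vertex at distance~$2$---but this is the same reconstruction as your leaf-deletion/broom analysis, just with the case boundary drawn at $t\in\{1,2,n-3,n-2\}$ rather than $t\in\{1,n-2\}$.
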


\begin{proof}
Let $\pi \in \{\pi^L_{i,j}, \pi^R_{i,j}\}$ be the ordering of points corresponding to the path $P$.
We argue that for any intermediate tree on $P$, it is possible to uniqely reconstruct the points $i$ and $j$ which form the center of the stars $S_i$ and $S_j$ that are the end vertices of the path $P$.
Let $T$ be the $t$-th intermediate tree on $P$, $1\leq t\leq n-2$.
By Lemma~\ref{lem:intermediate}, the point $\pi_t$ has degree~2 in $T$, whereas all other points $\pi_a$, $a\neq t$, are leafs of degree~1 in $T$, and $\deg(i)+\deg(j)=n-1\geq 5$.
If $T$ has two points of degree at least 3, those must be $i$ and $j$ and we are done.
Otherwise $T$ has only one point with degree at least 3, we assume w.l.o.g.\ that it is point~$i$.
Then we can determine point $j$ as the unique point with distance exactly $2$ from $i$ in $T$.
Specifically, by Lemma~\ref{lem:intermediate}, $i$ and $j$ are connected via $\pi_t$ in $T$, and all other points are neighbors of either $i$ or $j$, so they have  distance 1 or 3 from $i$.
This completes the proof.
\end{proof}

The proof of Theorem~\ref{thm:trees}~(ii) is split into three parts.
We first construct $r$-rainbow cycles for even values of $r$ (Proposition~\ref{prop:trees2r}), then for odd values of $r$ (Proposition~\ref{prop:trees2r-1}), and we finally settle some remaining small cases (Proposition~\ref{prop:trees-small}).

\begin{proposition}
\label{prop:trees2r}
Let $X$ be a set of $n \geq 6$ points in general position.
For any $r=1,2,\dots,\lfloor (n-1)/2\rfloor$, there is a $2r$-rainbow cycle in $G_X^\tS$.
\end{proposition}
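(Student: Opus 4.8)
The plan is to build a $2r$-rainbow cycle by concatenating our basic star-to-star paths $P^L_{i,j}$ and $P^R_{i,j}$ so that every color (every edge of $\binom{X}{2}$) is used exactly $2r$ times, while invoking Lemma~\ref{lem:Pij-disjoint} to guarantee that the resulting closed walk is an honest cycle (no repeated vertex). The crucial bookkeeping fact, already observed in the previous section, is that each path $P_{i,j}$ (either variant) contributes exactly once every edge from $E_X$ incident with the point $j$, namely the $n-1$ edges $\{j,k\}$ for $k\neq j$. So if I choose a collection of ordered pairs $(i,j)$ and chain the corresponding paths head-to-tail into a closed walk, the number of times the color $\{j,k\}$ is used equals the number of chosen pairs whose \emph{second} coordinate is $j$, plus the number of chosen pairs whose second coordinate is $k$. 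The design problem is therefore purely combinatorial: pick a closed sequence of stars $S_{c_1}, S_{c_2}, \dots, S_{c_L}, S_{c_1}$ such that each center $c\in[n]$ occurs as an endpoint the \emph{same} number of times, and this common count is exactly $2r$.

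\textbf{The combinatorial scheme.} Each color $\{a,b\}$ gets contributed once for every path ending at $a$ and once for every path ending at $b$; if I arrange that every point $j$ is the terminal endpoint of exactly $2r$ of the paths in the closed walk, then each color is hit $2r$ times (once via each of its two endpoints). Since the walk is closed, the number of paths \emph{leaving} each star equals the number \emph{entering} it, so I want a closed walk on the complete directed graph on vertex set $[n]$ of stars in which every vertex has in-degree (equivalently out-degree) exactly $2r$. For $1\le r\le\lfloor(n-1)/2\rfloor$ we have $2r\le n-1$, so such a walk exists: take an Eulerian-type circuit in a suitable $2r$-regular directed multigraph on $[n]$ (for instance, the union of $r$ copies of a symmetric $2$-regular ``next two neighbors'' digraph, or $2r$ disjoint Hamiltonian cycles when $2r\le n-1$), and realize each directed edge $(i,j)$ by one of the two paths $P^L_{i,j}$ or $P^R_{i,j}$. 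Following such a circuit and splicing the paths yields a closed walk in $G_X^\tS$ of the right length $2r\binom{n}{2}$ in which every color appears exactly $2r$ times.

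\textbf{The main obstacle} is ensuring the spliced walk is a genuine cycle, i.e.\ never revisits a vertex of $G_X^\tS$. The stars $S_1,\dots,S_n$ are visited repeatedly by design, so these are \emph{not} internal vertices to be avoided; the point is to avoid collisions among the \emph{intermediate} trees. Lemma~\ref{lem:Pij-disjoint} handles exactly this: any two of our paths with distinct endpoint pairs $\{i,j\}\neq\{i',j'\}$ are internally vertex-disjoint. So I must select, for each directed edge of the $2r$-regular digraph, a path in such a way that no two selected paths share the same \emph{unordered} endpoint pair. Concretely, for a given unordered pair $\{i,j\}$ there are at most the two directions $(i,j),(j,i)$ and the two variants $L,R$, giving up to four available paths; but Lemma~\ref{lem:Pij-disjoint} only certifies disjointness \emph{across different pairs}, so I may use each unordered pair $\{i,j\}$ \emph{at most once} in the whole construction. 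This forces the $2r$-regular multigraph to be \emph{simple} in the underlying undirected sense, i.e.\ I need a $2r$-regular graph on $[n]$ with no repeated edge whose orientation (as a union of directed cycles) can be traced as a single closed walk. Since $2r\le n-1$, a $2r$-regular \emph{simple} graph on $n$ vertices exists (e.g.\ a circulant), and its edges can be oriented and concatenated into one closed walk; I would also need the orientations along each consecutive pair of edges to be compatible with head-to-tail splicing, which the circulant structure makes transparent. The remaining routine check is that consecutive paths meet only at the shared star, which holds because each $P_{i,j}$ begins at $S_i$ and ends at $S_j$ and its intermediate trees are bi-centered caterpillars (Lemma~\ref{lem:intermediate}) distinct from all stars. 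Assembling these pieces gives the $2r$-rainbow cycle for every $r$ in the stated range.
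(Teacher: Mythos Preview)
Your combinatorial scheme is the right starting point and matches the paper's: take $r$ edge-disjoint Hamilton cycles in $K_n$ (Walecki), orient them, trace a single Eulerian circuit, and realise each directed edge $(i,j)$ by a star-to-star path $P_{i,j}$. The colour count works exactly as you say, and Lemma~\ref{lem:Pij-disjoint} handles collisions among intermediate trees. But your sentence ``The stars $S_1,\dots,S_n$ are visited repeatedly by design, so these are \emph{not} internal vertices to be avoided'' is the gap. A rainbow cycle in $G_X^\tS$ is an honest cycle in the graph-theoretic sense: no vertex of $G_X^\tS$ may repeat. In your construction each star $S_j$ is visited exactly $r$ times (once per incoming path), so for every $r\ge 2$ the closed walk you build is \emph{not} a cycle. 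Your argument as written only proves the case $r=1$.

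The paper confronts exactly this obstacle and spends most of the proof on it. For each triple $(i,j,k)$ of consecutive centres along the Eulerian circuit it defines a \emph{detour tree} $D_{i,j,k}$ obtained by swapping the order of the last flip into $S_j$ and the first flip out of $S_j$, so that the walk bypasses $S_j$ via $D_{i,j,k}$ instead of passing through it. For this detour tree to be a plane spanning tree one needs the first outgoing flip to land on the opposite side of the line $\ell_{j,k}$ from the point $i$; this is precisely why the paper's choice between $P^L_{j,k}$ and $P^R_{j,k}$ is dictated by whether $(i,j,k)$ takes a left-turn or a right-turn, not made arbitrarily as in your proposal. One still keeps a single genuine copy of each $S_j$ (ensured by routing one Hamilton cycle along the convex hull so that some triple at $j$ has $\{j,k\}$ on the hull and needs no detour), and then a case analysis (Lemma~\ref{lem:detour}) shows all detour trees are distinct from one another and from all intermediate trees. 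Without this detour mechanism, or something equivalent, your construction does not yield a cycle for $r\ge 2$.
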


In the proof we will use a decomposition of the complete graph on $n$~vertices into $\lfloor (n-1)/2\rfloor$ Hamilton cycles (and a perfect matching for even~$n$, which will not be used in the proof, though).
Such a decomposition exists by Walecki's theorem; see \cite{MR2394738}.

\begin{proof}
We apply Walecki's theorem to obtain a set $\cH$ of $r$ edge-disjoint Hamilton cycles in $K_n$, the complete graph on $n$ vertices.
We now consider the complete graph $K_X$ on the point set $X=[n]$, and we map the Hamilton cycles in $\cH$ onto $K_X$ such that one Hamilton cycle $H_0\in \cH$ visits all points on the convex hull of $X$ successively.
We orient $H_0$ so that it visits the points on the convex hull in counter-clockwise order.
If $X$ is not in convex position, then there is a unique edge $e$ on the convex hull of $X$ that is not covered by $H_0$.
If this edge $e$ is contained in some other Hamilton cycle $H_1 \in \cH$, then we orient $H_1$ so that the edge $e$ is also traversed in counter-clockwise direction on the convex hull.
Each of the remaining Hamilton cycles in $\cH$ is oriented arbitrarily in one of the two directions.
The union of these $r$ oriented Hamilton cycles in $K_X$ yields a directed graph with in-degree and out-degree equal to $r$ at each point.
We fix an arbitrary Eulerian cycle $\cE$ in this graph.
Note that $\cE$ visits each point exactly $r$ times, and it traverses all edges on the convex hull of $X$ in counter-clockwise direction.

We define a directed closed tour $C'$ in $G_X^\tS$, which possibly contains certain trees multiple times, by considering every triple of points $(i,j,k)$ along $\cE$.
If $i\in R_{j,k}$, then we say that the triple \emph{$(i,j,k)$ takes a right-turn}, and if $i\in L_{j,k}$, then we say that the triple \emph{$(i,j,k)$ takes a left-turn}.
If $(i,j,k)$ takes a right-turn, then we add the path $P^L_{j,k}$ to $C'$, and if $(i,j,k)$ takes a left-turn, then we add the path $P^R_{j,k}$ to $C'$.
Figure~\ref{fig:detour} shows an example of two concatenated paths.
From Lemma~\ref{lem:Pij-disjoint} we know that the stars $S_i$, $i\in[n]$, are the only trees that are visited multiple times by $C'$.
Specifically, each $S_i$ is visited exactly $r$ times by $C'$.
Furthermore, every edge $\{i,j\}\in E_X$ appears on exactly $2r$ arcs of $C'$, once on every path to $S_i$ and once on every path to $S_j$.
It follows that the tour $C'$ has the $2r$-rainbow property.

\begin{figure}
\begin{center}
\includegraphics{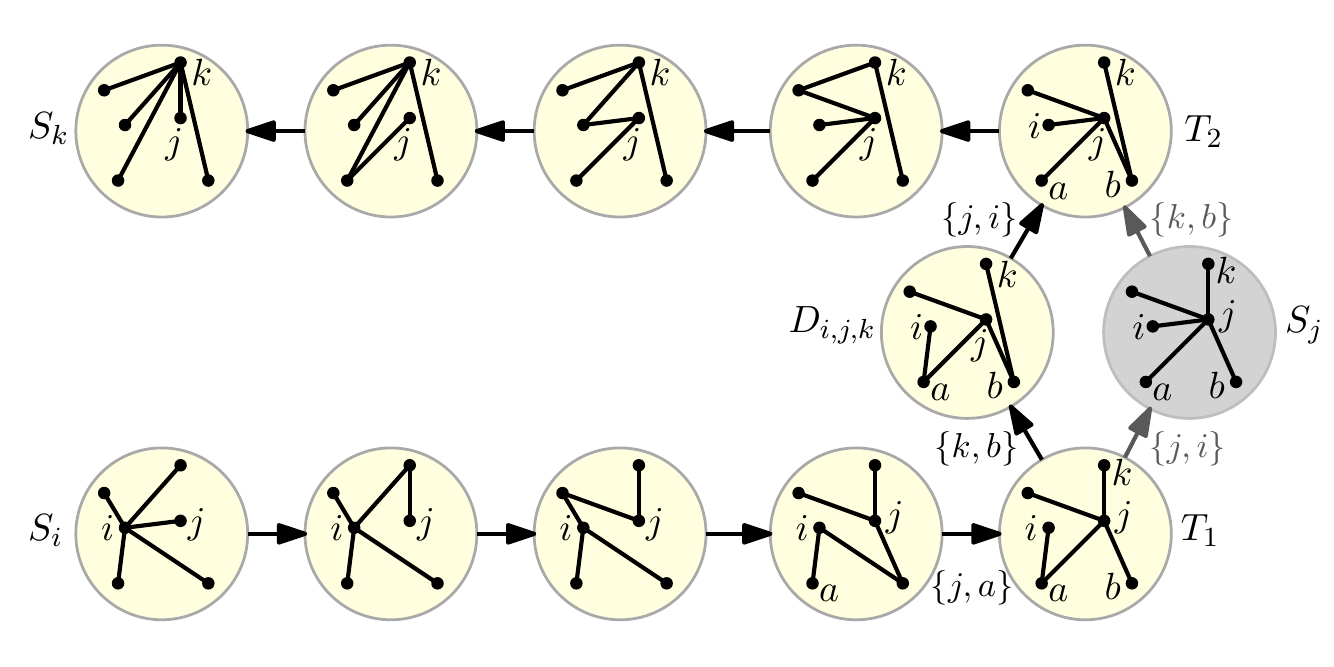} 
\caption{The path $P_{i,j}=P^L_{i,j}$ from $S_i$ to $S_j$ (bottom) and the path $P_{j,k}=P^R_{j,k}$ from $S_j$ to $S_k$ (top) and a detour around $S_j$ via the detour tree $D_{i,j,k}$, in the case where $a\notin \{b,k\}$.}
\label{fig:detour}
\end{center}
\end{figure}

To modify $C'$ into a $2r$-rainbow cycle $C$ we will use detours around most of the stars; see Figures~\ref{fig:detour} and \ref{fig:detour2}.
In the following we define the \emph{detour tree $D_{i,j,k}$} for those triples of points $(i,j,k)$ on $\cE$ where $\{j,k\}$ is not an edge of the convex hull of $X$.
These detour trees are then used to replace all the stars of $C'$ except a single occurence of each $S_j$ where $j$ is on the convex hull.
These replacements yield $C$.

\begin{figure}[htb]
\begin{center}
\includegraphics{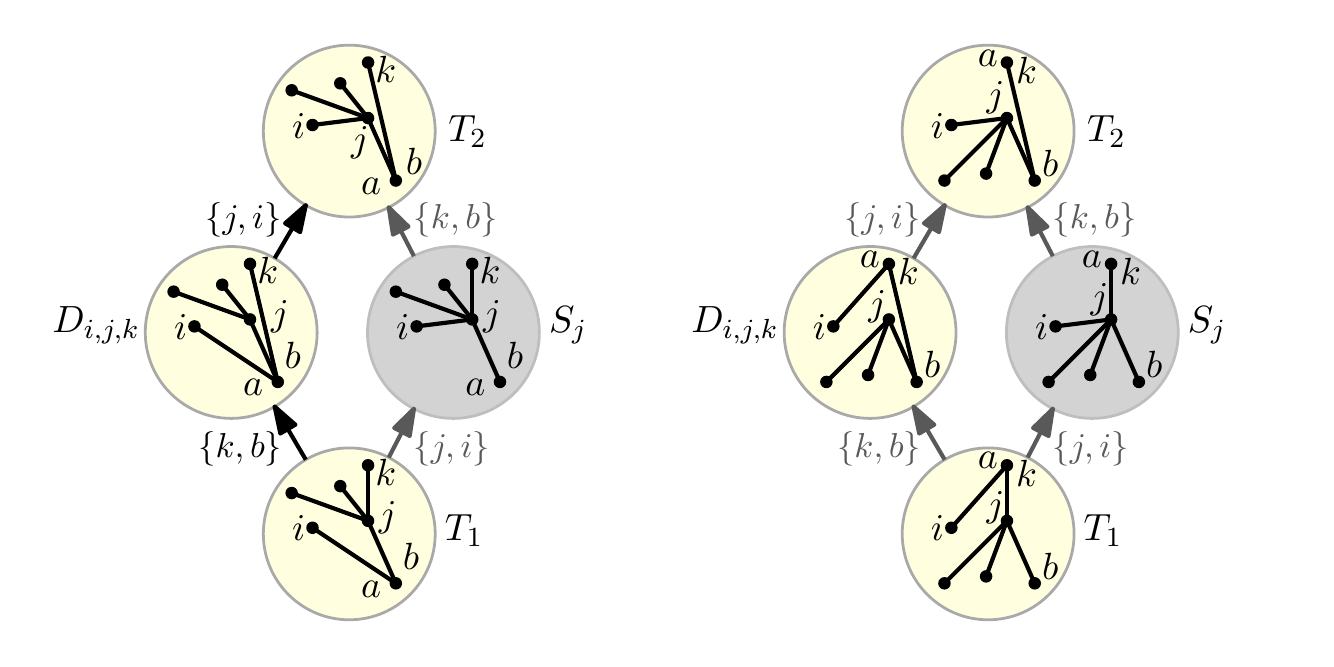} 
\caption{The detour trees $D_{i,j,k}$ for the cases $a = b$ (left) and $a = k$ (right).} 
\label{fig:detour2}
\end{center}
\end{figure}

To define $D_{i,j,k}$ consider the path $P_{i,j}\in \{P^L_{i,j}, P^R_{i,j}\}$ from $S_i$ to $S_j$ and the path $P_{j,k} \in \{P^L_{j,k}, P^R_{j,k}\}$ from $S_j$ to $S_k$ in $C'$.
Let $T_1$ be the predecessor of $S_j$ on $P_{i,j}$, and let $T_2$ be the successor of $S_j$ on $P_{j,k}$.
There are points $a,b\in[n]$ such that $S_j = T_1 - \{i,a\} + \{j,i\}$ and $T_2 = S_j - \{j,k\} +  \{k,b\}$.
The detour tree is defined as
\begin{equation}
\label{eq:Dijk}
  D_{i,j,k} = T_1 - \{j, k\} + \{k, b\} = T_2  - \{j,i\} + \{i,a\}.
\end{equation}
We denote the two relevant flips in this definition by $f_1:=(\{i,a\},\{j,i\})$ and $f_2:=(\{j,k\},\{k,b\})$.
As $\{j,k\}$ does not lie on the convex hull of $X$, both half-planes $L_{j,k}$ and $R_{j,k}$ contain points from $X$.
By our choice of the path from $S_j$ to $S_k$ based on the orientation of the triple $(i,j,k)$, the points~$b$ and $i$ lie in opposite half-planes.
It follows that $b \neq i$ and that $i$, $j$, $k$, and $b$ are four different points.
Note also that the point $a$ is different from $i$ and $j$, but it may happen that $a=b$ or that $a=k$; see Figure~\ref{fig:detour2}.

\begin{claim}
The flip $f_2=(\{j, k\},\{k, b\})$ is a legal flip applicable to $T_1$ and the resulting graph $D_{i,j,k}$ is a plane spanning tree.
\end{claim}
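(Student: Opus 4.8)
The plan is to first turn the definition of $D_{i,j,k}$ into an explicit edge list and settle the combinatorial content of the claim, and then to verify planarity edge pair by edge pair, using the two already-established plane trees $T_1$ and $T_2$ as certificates. Applying Lemma~\ref{lem:intermediate} with $t=n-2$ to $P_{i,j}$ shows that $T_1$, the predecessor of $S_j$, is the caterpillar in which $j$ is adjacent to every point of $[n]\setminus\{i,j\}$ while $i$ is a leaf adjacent to $a$. Consequently $\{j,k\}\in T_1$ (since $k\neq i,j$), and $\{k,b\}\notin T_1$, because in $T_1$ the point $k$ is adjacent only to $j$, and to $i$ in the special case $a=k$, whereas $b\notin\{i,j\}$. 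Deleting $\{j,k\}$ from $T_1$ separates off the component $\{k\}$, or $\{i,k\}$ when $a=k$, from a second component that contains $j$ and, because $b\neq i,k$, also contains $b$; hence inserting $\{k,b\}$ reconnects the tree. This already shows that $f_2$ is a legal flip at the combinatorial level and that $D_{i,j,k}$ has edge set consisting of $\{i,a\}$, $\{k,b\}$ together with the spokes $\{j,\ell\}$ for $\ell\in[n]\setminus\{i,j,k\}$.

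For planarity I would check that no two edges of $D_{i,j,k}$ cross. Two spokes share $j$ and never cross. A spoke $\{j,\ell\}$ and the edge $\{i,a\}$ either share the endpoint $a$ (if $\ell=a$) or lie together in the plane tree $T_1$, hence do not cross; symmetrically, $\{j,\ell\}$ and $\{k,b\}$ either share $b$ or lie together in the plane tree $T_2=S_j-\{j,k\}+\{k,b\}$, the successor of $S_j$ on $P_{j,k}$, and so do not cross. The only pair left is $\{i,a\}$ against $\{k,b\}$. When $a=b$ or $a=k$ these two segments share an endpoint and are automatically fine, so the crux is the generic case in which $i,j,k,b,a$ are five distinct points, the situation drawn in Figure~\ref{fig:detour}.

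Here I would use that $\{j,k\}$ is not a convex-hull edge, so $L_{j,k}$ and $R_{j,k}$ are both non-empty, and that the orientation of $P_{j,k}$ was chosen from the turn of $(i,j,k)$ precisely so that $i$ and $b$ lie on opposite sides of $\ell_{j,k}$; say $i\in R_{j,k}$ and $b\in L_{j,k}$. Then the triangle $\triangle jkb$ lies in the closed half-plane on the side of $b$, with sides $\{j,k\}$, $\{j,b\}$ and $\{k,b\}$. The segment $\{i,a\}$ shares no endpoint with the first two sides and lies together with each of them in the plane tree $T_1$, so it crosses neither; therefore, were $\{i,a\}$ to meet $\{k,b\}$, it would cross the boundary of $\triangle jkb$ exactly once, putting one of its endpoints strictly inside the triangle. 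Since $i$ lies strictly on the other side of $\ell_{j,k}$, that endpoint must be $a$, and the whole claim reduces to showing that $a$ does not lie in the interior of $\triangle jkb$.

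The hard part will be this last step, and it cannot be settled by the half-plane and triangle facts alone: four points can be placed exactly as above with $\{i,a\}$ and $\{k,b\}$ genuinely crossing, so the extremal definitions of $a$ and $b$ must be used. The useful viewpoint is the cyclic order of the spokes around the common apex $j$. The cone of $\triangle jkb$ at $j$ is bounded by the directions $j\to k$ (along $\ell_{j,k}$) and $j\to b$, and lies entirely on the $L_{j,k}$ side, whereas $i$ lies on the $R_{j,k}$ side, so the direction $j\to i$ is outside this cone. Since $a$ is the last point in the ordering driving $P_{i,j}$ -- equivalently the neighbour of $i$ in $T_1$, which forces the spoke $\{j,a\}$ to be angularly adjacent to $j\to i$ -- the point $a$ cannot be rotated past $j\to k$ into the cone, so $a\notin\triangle jkb$. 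I expect the remaining work to be bookkeeping: handling both possible left/right orientations of $P_{i,j}$, relating the two half-plane structures of $\ell_{i,j}$ and $\ell_{j,k}$ at their common point $j$, and invoking the symmetric triangle $\triangle jia$ coming from $T_2$ to close the cases. This angular case analysis, driven by the extremality of $a$ and $b$, is the technical heart of the argument.
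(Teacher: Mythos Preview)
Your setup and the reduction to the single pair $\{i,a\}$ versus $\{k,b\}$ match the paper's proof exactly, and your triangle-boundary reformulation (showing $a\notin\triangle jkb$) is a clean restatement of what the paper does with separating lines. The key fact in both approaches is the one you identify: $a$ is an immediate angular neighbour of $i$ around $j$.

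The gap is in your last step. You only argue that $a$ cannot reach the cone at $j$ of $\triangle jkb$ by rotating from $i$ past the direction $j\to k$ (since the point $k$ blocks adjacency). You do not treat the other route, where $a$ lies in $L_{j,k}$ just past the antipodal direction $\bar k$; this is exactly the case the paper isolates as ``remaining'' and handles using the extremality of $b$ as seen from $k$ (not from $j$), producing a separating line through $k$ and $a$. Your suggested fix via the symmetric triangle $\triangle jia$ is not reliable: adjacency of $a$ and $i$ around $j$ means one of the two arcs between them is empty, but it need not be the arc of angle $<\pi$, and when it is the long arc the interior of $\triangle jia$ can contain both $k$ and $b$. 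In fact your own cone-at-$j$ argument can be finished directly without passing to $\triangle jia$: if $a$ lies past $\bar k$ in $L_{j,k}$, adjacency forces the whole $L_{j,k}$-arc from $\bar k$ to $a$ to be empty of points, so $b$ sits between $k$ and $a$, and the cone $[k,b]$ stops short of $a$. So your route does close, but the sentence you wrote is not yet a proof of the last case, and the ``bookkeeping'' you defer is precisely where the paper spends its effort.
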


To establish this claim we check the following four properties:

\begin{itemize}

\item \textit{$\{j, k\}$ is an edge of $T_1$.}
$T_1$ contains all edges incident with $j$ except the edge $\{i,j\}$ by Lemma~\ref{lem:intermediate}.
As $i \neq k$, the tree $T_1$ contains in particular the edge $\{j,k\}$.

\item \textit{$\{k, b\}$ is not an edge of $T_1$.}
The only edge in $T_1$ that is not incident to $j$ is the edge $\{i, a\}$.
As $i \neq k$ and $i \neq b$, it follows that $\{k, b\}$ is not an edge of $T_1$.

\item \textit{$D_{i,j,k}$ is a spanning tree.}
We only need to check that $D_{i,j,k}$ is connected, which is true as the two endpoints $j$ and $k$ of the removed edge $\{j,k\}$ are connected via the path $(j,b,k)$ in $D_{i,j,k}$.

\item \textit{$D_{i,j,k}$ is plane.}
The added edge $\{k, b\}$ does not cross any edges incident with $j$, otherwise it would not be the first added edge along $P_{j,k}$.
It remains to show that $\{k, b\}$ does not cross the edge $\{i, a\}$ either.
If $a=b$ or $a=k$, then there is no crossing and we are done, so we can assume that $a$ is different from the four other points $i,j,k,b$.
In the following we assume that the triple $(i,j,k)$ takes a left-turn, the other case follows by symmetry.
Recall from before that $b\in R_{j,k}$ and $i\in L_{j,k}$.
Hence, the counter-clockwise order around $j$ is $(b,k,i,\bar{k})$, where $\bar{k}$ denotes the antipodal direction of $k$.
From the definition of $P_{i,j}^L$ and $P_{i,j}^R$ it follows that $a$ is either the immediate predecessor or the immediate successor of $i$ in the cyclic order of the points around $j$.
If the counter-clockwise order around $j$ is $(b,k,a,i,\bar{k})$ or $(b,k,i,a,\bar{k})$ then there is a line through $j$ which separates $\{k, b\}$ and $\{i, a\}$.
In the remaining case $a$ and $b$ belong to $R_{i,j}\cap R_{j,k}$.
By the definition of $T_2$ we also have $a\in R_{k,b}$ in this case, so the line through $k$ and $a$ separates $\{k, b\}$ and $\{i, a\}$.

\end{itemize}

This completes the proof of the claim.

Note that executing the flip operations $f_1$ and $f_2$ in the opposite order just changes the order in which the edges $\{j,i\}$ and $\{k,b\}$ appear, so the resulting tour $C$ still has the $2r$-rainbow property.
To show that $C$ is a cycle, it remains to show that each detour tree $D_{i,j,k}$ is used only once in $C$.
For this we first give an explicit description of the intermediate trees, which is an immediate consequence of the previous definitions.

\begin{lemma}
\label{lem:detour}
Let $n\geq 6$.
For any triple $(i,j,k)\in \cE$, the intermediate tree $D_{i,j,k}$ defined in \eqref{eq:Dijk} has the following properties:
\begin{enumerate}[label=(\arabic*)]
\item If $a\notin\{b,k\}$, then $D_{i,j,k}$ is a caterpillar and $(a,j,b)$ is a central path of the caterpillar with degree sequence $(2,n-3,2)$, where the unique leafs in distance~2 from $j$ are the points $i$ and $k$; see Figure~\ref{fig:detour}.
\item If $a=b$, then $D_{i,j,k}$ is a caterpillar and $(j,a)=(j,b)$ is a central path of the caterpillar with degree sequence $(n-3,3)$, where the unique leafs in distance~2 from $j$ are the points $i$ and $k$; see the left hand side of Figure~\ref{fig:detour2}. \\
Moreover, if $(i,j,k)$ takes a left-turn, then the point $a=b$ lies within the sector $R_{i,j}\cap R_{j,k}$ and there is no point in the sector $R_{i,j}\cap R_{j,a}$.
If $(i,j,k)$ takes a right-turn, on the other hand, then an analogous statement holds with right and left half-planes interchanged.
\item If $a=k$, then $D_{i,j,k}$ is a caterpillar and $(j,b,a)=(j,b,k)$ is a central path of the caterpillar with degree sequence $(n-3,2,2)$, where $k$ is the unique point in distance~2 from $j$, and $i$ is the unique leaf incident with $k$; see the right hand side of Figure~\ref{fig:detour2}.
\end{enumerate}
\end{lemma}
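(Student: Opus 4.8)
The plan is to read off the structure of $D_{i,j,k}$ directly from its defining equation~\eqref{eq:Dijk} together with the explicit description of $T_1$ supplied by Lemma~\ref{lem:intermediate}. Since $T_1$ is the predecessor of $S_j$ on the path $P_{i,j}$, it is the last intermediate tree on that path, i.e.\ the case $t=n-2$ of Lemma~\ref{lem:intermediate}. Hence in $T_1$ the point $j$ is adjacent to every point except $i$, the point $a=\pi_{n-2}$ has degree $2$, and $i$ is a pendant attached to $a$. The detour tree is obtained from $T_1$ by the single flip $f_2=(\{j,k\},\{k,b\})$, which detaches $k$ from $j$ and reattaches it to $b$. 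Thus in $D_{i,j,k}$ the point $j$ is adjacent to every point except $i$ and $k$ (degree $n-3$), the point $i$ remains a pendant on $a$, and $k$ becomes a pendant on $b$. All three degree counts will then follow by bookkeeping these incidences.

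I would next split into the three cases according to the position of $a$ relative to $b$ and $k$, recalling that $i,j,k,b$ are pairwise distinct and $a\notin\{i,j\}$, both established in the proof of the Claim above. If $a\notin\{b,k\}$, then $a$ and $b$ are distinct neighbors of $j$, each carrying exactly one extra pendant ($i$ on $a$, $k$ on $b$); deleting the remaining leaves of $j$ leaves the path $(a,j,b)$ with degree sequence $(2,n-3,2)$, and $i,k$ are the two points at distance~$2$ from $j$. If $a=b$, this common point carries both pendants $i$ and $k$ in addition to its edge to $j$, so it has degree $3$ and $(j,a)$ is a central path with degree sequence $(n-3,3)$. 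If $a=k$, then the pendant edge $\{i,a\}=\{i,k\}$ makes $i$ hang off $k$ while $k$ hangs off $b$, producing the chain $j$--$b$--$k$--$i$; thus $(j,b,k)$ is a central path with degree sequence $(n-3,2,2)$, the point $k$ is the unique point at distance~$2$ from $j$, and $i$ is the unique leaf incident with $k$. In each case the asserted caterpillar description is immediate.

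It remains to establish the geometric addendum in the case $a=b$, which I expect to be the only nontrivial step. Here I would reuse the angular setup from the proof of the preceding Claim: in the left-turn case one has $i\in L_{j,k}$ and $b\in R_{j,k}$, where $b$ is the first point of the ordering $\pi^R$ underlying $P^R_{j,k}$ and is therefore angularly extremal in $R_{j,k}$; moreover $a$ is forced to be the immediate angular neighbor of $i$ in the cyclic order of points around $j$. Since $a=b\in R_{j,k}$ sits angularly adjacent to $i$ on the side of $\bar{k}$, the ray $ja$ lies to the right of the directed line $\ell_{i,j}$, which places $a$ in $R_{i,j}$ and hence in $R_{i,j}\cap R_{j,k}$. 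Emptiness of the sector $R_{i,j}\cap R_{j,a}$ then follows by combining the extremality of $b$ in $R_{j,k}$ with the angular adjacency of $a$ and $i$ around $j$: any point of that sector would lie strictly between the rays $ji$ and $ja$, contradicting that $a$ is the angular neighbor of $i$. The right-turn case is symmetric, with left and right half-planes interchanged.

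The main obstacle is precisely this last step. The three degree-sequence claims are a routine incidence count once $T_1$ is described via Lemma~\ref{lem:intermediate}, but deciding in which half-plane of $\ell_{i,j}$ the point $a=b$ lies, and certifying the empty sector, requires a careful rerun of the angular arguments from the Claim, keeping track of both the extremal choice of $b$ in $R_{j,k}$ and the angular adjacency of $a$ to $i$ around $j$.
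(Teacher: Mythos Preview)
Your proposal is correct and follows exactly the route the paper takes: the paper does not give a separate proof of this lemma at all but simply declares it ``an immediate consequence of the previous definitions,'' and your argument is precisely that unpacking---read off the shape of $T_1$ from the $t=n-2$ case of Lemma~\ref{lem:intermediate}, apply the single flip $f_2$, and bookkeep degrees in the three cases. Your identification of the geometric addendum in case~(2) as the only nontrivial point is apt; the ingredients you cite (that $b\in R_{j,k}$ by the choice of $P^R_{j,k}$, and that $a$ is an immediate angular neighbour of $i$ around $j$, as noted in the proof of the Claim) are exactly what is needed, and your observation that $k$ blocks $a=b$ from being the clockwise neighbour of $i$ forces $a$ onto the counter-clockwise side, from which $a\in R_{i,j}$ and the emptiness of the sector $R_{i,j}\cap R_{j,a}$ follow.
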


Since at most one of the triples $(i,j,k)$ or $(k,j,i)$ appears along $\cE$, this lemma allows us to reconstruct the triple $(i,j,k)$ from any given detour tree $D_{i,j,k}$.
Only in the case $n=6$ when the degree sequence of the central path of the caterpillar is $(3,3)$ (case~(2) of the lemma), there is an ambiguity which of the two points of the central path is $j$.
This ambiguity can be resolved by using the additional property mentioned in (2) involving half-planes.
It can be easily checked that if this condition holds for $(i,j,k)$ as in the lemma, then it does not hold for $(x,a,y)=(x,b,y)$, where $x$ and $y$ are the unique leafs incident with $j$.
Consequently, all detour trees $D_{i,j,k}$ included in $C$ are distinct.

It remains to argue that each detour tree $D_{i,j,k}$ is distinct from all intermediate trees along any path $P_{i',j'}\in\{P^L_{i',j'},P^R_{i',j'}\}$ from which $C$ is built.
By Lemma~\ref{lem:intermediate}, the $t$-th intermediate tree $T$ on $P_{i',j'}$ has a central path with degree sequence $(n-1-t,2,t)$ for each $1\leq t\leq n-2$.
In cases~(1) and (2) of Lemma~\ref{lem:detour}, comparing the degree sequences shows that $D_{i,j,k}$ must be different from $T$.
In case~(3) there can only be a conflict if $t=2$, as then we have $(n-1-t,2,t)=(n-3,2,2)$.
Matching the degree sequences in this case, we must have $(i',j')=(j,a)=(j,k)$.
However, in the second intermediate spanning ($t=2$) on $P_{i',j'}=P_{j,k}$ the point $b$ has degree 1, whereas the point $b$ has degree 2 in $D_{i,j,k}$.
It follows that all detour trees $D_{i,j,k}$ are distinct from all intermediate trees.

This shows that $C$ is indeed a $2r$-rainbow cycle in $G_X^\tS$, completing the proof of Proposition~\ref{prop:trees2r}.
\end{proof}

To construct a $(2r - 1)$-rainbow cycle in $G_X^\tS$ we slightly modify the construction from the previous proof.
Specifically, we remove one Hamilton cycle from $K_X$ before building the Eulerian cycle, which decreases the rainbow count by 2 for each edge from $E_X$.
Instead we substitute the $1$-rainbow cycle constructed in the proof of Theorem~\ref{thm:trees}~(i), yielding a $(2r-1)$-rainbow cycle.

\begin{proposition}
\label{prop:trees2r-1}
Let $X$ be a set of $n \geq 6$ points in general position.
For any $r=2,3,\dots,\lfloor (n-1)/2\rfloor$, there is a $(2r-1)$-rainbow cycle in $G_X^\tS$.
\end{proposition}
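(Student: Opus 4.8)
The plan is to follow the recipe announced in the paragraph immediately preceding Proposition~\ref{prop:trees2r-1}: build a $(2r-1)$-rainbow cycle by taking the $2r$-rainbow construction from Proposition~\ref{prop:trees2r} but starting from a decomposition that uses one fewer Hamilton cycle, and then splicing in the $1$-rainbow cycle from Theorem~\ref{thm:trees}~(i) to make up the deficit. Concretely, for $r\in\{2,\dots,\lfloor(n-1)/2\rfloor\}$ I would first apply Walecki's theorem to extract a set $\cH$ of $r-1$ edge-disjoint Hamilton cycles in $K_n$ and run the entire machinery of the previous proof on these $r-1$ cycles: map them into $K_X$ with a distinguished cycle $H_0$ traversing the convex hull counter-clockwise, orient the hull edges consistently, take an Eulerian cycle $\cE$, replace each triple $(i,j,k)$ along $\cE$ by the appropriate path $P^L_{j,k}$ or $P^R_{j,k}$, and install the detour trees $D_{i,j,k}$ around all stars except one occurrence of each hull-star. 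By Proposition~\ref{prop:trees2r} this yields a $(2r-2)$-rainbow cycle $C_0$ in $G_X^\tS$.

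The next step is to combine $C_0$ with one copy of the $1$-rainbow cycle $C_1$ guaranteed by Theorem~\ref{thm:trees}~(i), so as to obtain a single closed walk in which every edge of $E_X$ is traversed exactly $(2r-2)+1=2r-1$ times. The natural gluing point is a common vertex: recall that Theorem~\ref{thm:trees}~(i) produces a $1$-rainbow cycle containing the subpath $P^L_{n,1}=P^R_{n,1}$, and in particular it passes through the stars $S_1(X)$ and $S_n(X)$. Since $\cE$ visits every point of $X$, the tour $C_0$ also passes through each star $S_j$ (one un-detoured occurrence remains for each hull point $j$). I would therefore pick a vertex common to both cycles — a convenient choice being a hull-star $S_j$ that survives as a genuine star in $C_0$ and is visited by $C_1$ — and concatenate the two cycles at that vertex to form one closed walk $C$. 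Every color in $E_X$ is then used $2r-2$ times by the $C_0$-portion and once more by the $C_1$-portion, giving exactly $2r-1$, so $C$ has the required rainbow count.

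The delicate point, and the one I expect to be the main obstacle, is verifying that $C$ is an honest cycle rather than merely a closed rainbow walk: I must check that no spanning tree other than the single chosen splice vertex is visited by both $C_0$ and $C_1$, and that within each piece no tree repeats. Repetition inside $C_0$ is already controlled by Proposition~\ref{prop:trees2r} (via Lemmas~\ref{lem:Pij-disjoint} and~\ref{lem:detour}), and repetition inside $C_1$ is controlled by Theorem~\ref{thm:trees}~(i), so the real work is the cross-comparison. The trees appearing in $C_0$ are either stars, path-intermediate caterpillars with a degree-$2$ apex (central path degree sequence $(n-1-t,2,t)$), or detour caterpillars with a high-degree center (degree sequences $(2,n-3,2)$, $(n-3,3)$, $(n-3,2,2)$), whereas the non-star trees of $C_1$ are again the caterpillars coming from the $F^{L/R}$ flip sequences. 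Since the only trees common to both constructions are stars, the argument reduces to ensuring that the splicing identifies exactly one shared star and that all other star-occurrences are removed by detours on at least one side; this is precisely why $C_0$ retains only a single genuine occurrence of each hull-star, and why one should splice at such a star. Once this disjointness is confirmed, $C$ is a single cycle traversing every color exactly $2r-1$ times, which completes the proof.
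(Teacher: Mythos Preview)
Your overall strategy matches the paper's, but the disjointness verification has genuine gaps that make the argument fail as written.

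First, your structural description of $C_1$ is wrong: the $1$-rainbow cycle of Theorem~\ref{thm:trees}~(i) is built \emph{inductively}, and only its final two segments consist of $P^{L/R}$-intermediate caterpillars on the full set $X$. The earlier trees have the form $T_c=S_c([c])+\{1,c+1\}+\cdots+\{1,n\}$ and their neighbours; some have diameter at least~$5$, others have central-path degree sequences such as $(n-c+1,c-1)$ or $(n-c+1,c-2,2)$ that can coincide with those of detour trees in $C_0$ (for instance $T_4$ matches the $(n-3,3)$ pattern of Lemma~\ref{lem:detour}(2)). The paper disposes of these possibilities with a nine-case analysis; your one-line ``the only trees common to both constructions are stars'' does not.

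Second, the splice itself fails. You yourself note that $C_1$ passes through both $S_1$ and $S_n$; since $1$ and $n$ are hull points, $C_0$ retains one genuine copy of each, so splicing at a single star leaves the other as a repeated vertex. Worse, $C_1$ contains all of $P^L_{n,1}$ and most of $P^L_{n-1,n}$; if the edges $\{n,1\}$ or $\{n-1,n\}$ happen to lie in your Eulerian cycle~$\cE$, their intermediate trees also appear in $C_0$. The paper forestalls all of this by forcing the removed Hamilton cycle $H_0$ to contain $\{n-1,n\}$, $\{n,1\}$, and $\{1,2\}$, so these edges are absent from $\cE$; it then arranges that $S_n$ is always detoured in $C$ while exactly one un-detoured $S_1$ survives, and replaces that $S_1$ by $C^1$ followed by a specific detour tree $D_{n,1,k'}$. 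Without these preparations your cross-comparison claim is simply false.
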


\begin{proof}
The construction starts as in the proof of Proposition~\ref{prop:trees2r}.
We consider a set $\cH$ of $r$ edge-disjoint Hamilton cycles in the complete graph $K_n$, and we map the Hamilton cycles in $\cH$ onto the complete graph $K_X$ on the point set $X=[n]$ such that one Hamilton cycle $H_0$ visits all points on the convex hull of $X$ successively.
In addition, we perform the mapping so that $H_0$ contains the edges $\{n-1, n\}$, $\{n, 1\}$, and $\{1, 2\}$ (the first of these three edges does not necessarily lie on the convex hull, but the latter two edges do by our ordering of the points).
We orient each of the Hamilton cycles as in the previous proof, so that all edges on the convex hull are oriented counter-clockwise.
We now remove the Hamilton cycle $H_0$, yielding a set of directed Hamilton cycles $\cH^-:=\cH\setminus \{H_0\}$, and build a Eulerian cycle $\cE$ in this graph, which has in-degree and out-degree equal to $r-1\geq 1$ at each point.
We fix one triple of the form $(i',1,k')$ in $\cE$.
From the Eulerian cycle $\cE$ we build a directed closed tour $C'$ in $G_X^\tS$ as in the previous proof, where for the special triple $(i',1,k')$ we use the path $P^R_{1,k'}$, regardless of the orientation of this triple.
Along the edges of this tour, every edge from $E_X$ appears exactly $2r-2$ times.
We then modify $C'$ into a cycle $C$ by considering every triple $(i,j,k)\in\cE$ except the special triple $(i',1,k')$ and by replacing $S_j$ by the corresponding detour tree $D_{i,j,k}$ if the edge $\{j,k\}$ does not lie on the convex hull of $X$ (as before).
For the special triple $(i',1,k')$, we do not replace $S_1$.
As in $\cH^-$ no directed edge starting at 1 proceeds along an edge of the convex hull of $X$ ($H_0$ uses the edge $\{1,2\}$, and the edge $\{n,1\}$ is oriented towards 1), all occurences of $S_1$ in $C'$ except for the single occurence corresponding to the special triple $(i',1,k')$ are replaced in $C$.
Now let $C^1$ be the 1-rainbow cycle constructed as in the proof of Theorem~\ref{thm:trees}~(i) starting with $S_1$.
We replace the unique occurence of $S_1$ in $C$ by $C^1$ followed by $D_{n,1,k'}$, yielding a tour $C^+$; see Figure~\ref{fig:odd-rainbow}.
We claim that $C^+$ is a $(2r-1)$-rainbow cycle in $G_X^\tS$.
Clearly, $C^+$ has the $(2r-1)$-rainbow property, so we only need to show that $C^+$ is a cycle, i.e., no tree is visited more than once.
By the arguments given in the proof of Proposition~\ref{prop:trees2r}, it suffices to show that all trees on $C^1$ are distinct from the ones in $C\setminus\{S_1\}$.

We divide the cycle $C^1$ into segments according to its inductive construction in the proof of Theorem~\ref{thm:trees}~(i).
Specifically, for $c=3,4,\dots,n$ we define
\begin{equation*}
T_c:=S_c([c])+\{1,c+1\}+\{1,c+2\}+\dots+\{1,n\};
\end{equation*}
see Figure~\ref{fig:odd-rainbow}.
These are the spanning trees along which the cycle in the inductive construction described in Theorem~\ref{thm:trees}~(i) is split in each step.
We follow $C^1$ starting at $S_1$ and argue that each of the trees along the cycle is distinct from $C\setminus\{S_1\}$.
The arguments are divided into cases (1)--(9), which are illustrated in Figure~\ref{fig:odd-rainbow}.

\begin{figure}
\centering
\includegraphics{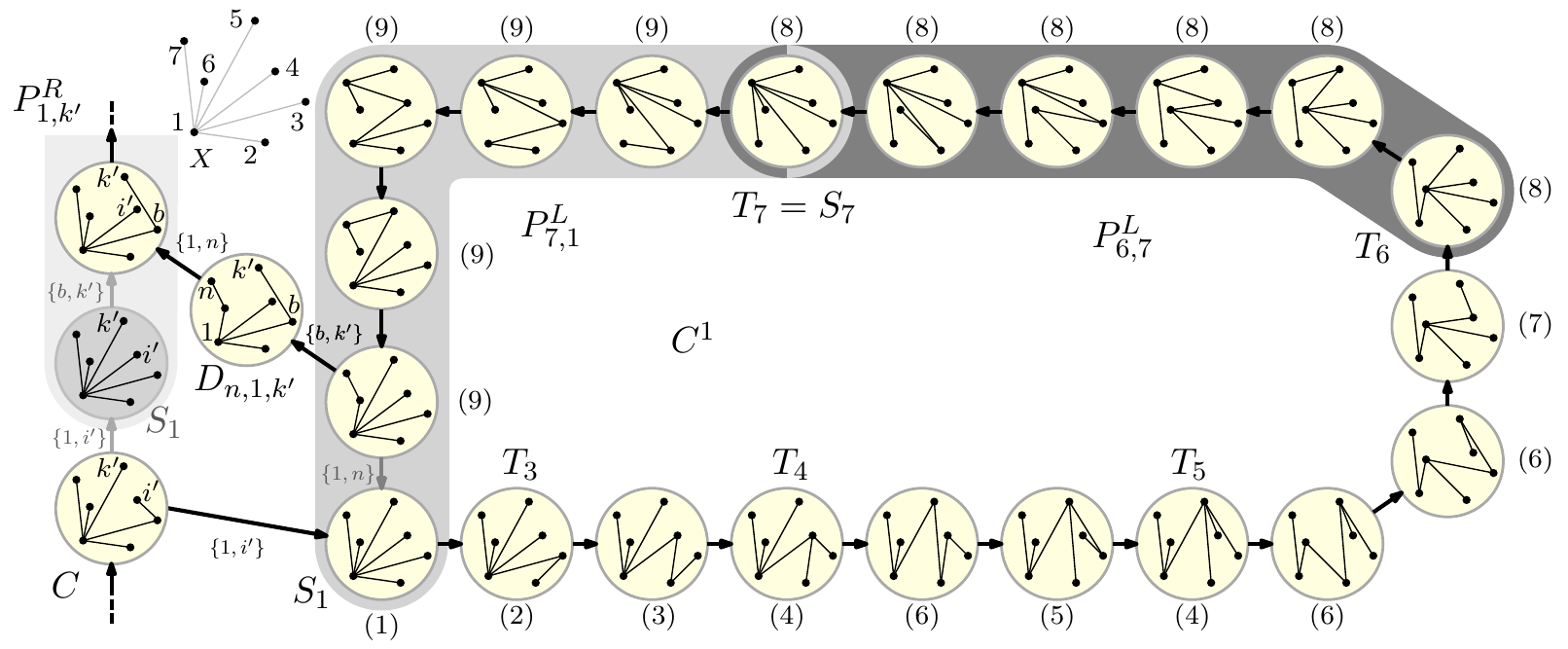}
\caption{Modification of the $(2r-2)$-rainbow cycle $C$ by including the $1$-rainbow cycle $C^1$ to obtain a $(2r-1)$-rainbow cycle $C^+$ for a point set with $n=7$ points.}
\label{fig:odd-rainbow}
\end{figure}

\begin{enumerate}
\item
The first tree on $C^1$ is $S_1$.
As argued before, $C$ contains only a single occurence of $S_1$ which was replaced, so $S_1$ is unique in $C^+$.

\item
The tree $T_3$ is a caterpillar with central path $(1,3,2)$ with degree sequence $(n-2,2,1)$.
This tree is different from any detour tree by Lemma~\ref{lem:detour}.
By Lemma~\ref{lem:intermediate}, it can only be equal to the first intermediate tree on the path $P^L_{1,2}=P^R_{1,2}$.
However, as the edge $\{1,2\}$ is contained in $H_0$ and not in $\cE$, the path $P^L_{1,2}$ is not part of $C'$, and therefore $T_3$ does not occur in $C$.

\item
The successor of $T_3$ on $C^1$ is a caterpillar with central path $(1,4,2)$ with degree sequence $(n-3,2,2)$.
By Lemma~\ref{lem:detour} this tree can only be equal to a detour tree $D_{i,j,k}$ as captured in case~(3) of the lemma, which would imply $j=1$ and $k=2$.
However, no triple of the form $(i,j,k)=(i,1,2)$ is contained in $\cE$.
Moreover, by Lemma~\ref{lem:intermediate} this tree can only be equal to the second intermediate tree on the path $P_{1,2}$, which is not part of $C$ as argued before.

\item
The tree $T_c$, $4 \leq c \leq n-2$, is a caterpillar with a central path $(1,c)$ with degree sequence $(n-c+1,c-1)$.
It follows from Lemmas~\ref{lem:intermediate} and \ref{lem:detour} that for $4<c<n-2$ this tree is different from any tree on $C$.
For $c=4$, this tree can only be equal to a detour tree $D_{i,j,k}$ as captured in case~(2) of Lemma~\ref{lem:detour} where $a=b$, which would imply $(j,a)=(1,4)$ and $\{i,k\}=\{2,3\}$, or $(j,a)=(c,1)$ and $\{i,k\}=\{5,6\}$ for $n=6$.
In both cases we obtain a contradiction to the property that the point $a=b$ lies within the sector $R_{i,j}\cap R_{j,k}$ if the triple $(i,j,k)$ takes a left-turn, or in the sector $L_{i,j}\cap L_{j,k}$ if the triple takes a right-turn.
For $c=n-2$, a similar reasoning shows that $T_c$ is different from any trees on $C$.

\item
The predecessor of $T_c$, $5 \leq c \leq n-2$, is a caterpillar with a central path of the form $(1,c,x)$ with degree sequence $(n-c+1,c-2,2)$, which by Lemmas~\ref{lem:intermediate} and \ref{lem:detour} is distinct from any tree on $C$.

\item
For $4 \leq c \leq n-2$, all trees strictly between $T_i$ and the predecessor of $T_{c+1}$ on $C^1$, have diameter at least~5.
Specifically, the $k$-th successor of $T_i$ contains a path of the form $(n, 1, c+1, x, c, y)$.
By Lemmas~\ref{lem:intermediate} and \ref{lem:detour} all trees on $C$ have diameter at most~4.

\item
The predecessor of $T_{n-1}$ on $C^1$ has a central path of the form $(1,n-1,x)$ with degree sequence $(2,n-3,2)$, where $n$ is the unique leaf incident with $1$ and $n-2$ is the unique leaf incident with~$x$.
By Lemma~\ref{lem:intermediate} this tree can only be equal to a detour spanning $D_{i,j,k}$ as captured in case~(1) of the lemma, which would imply $(i,j,k)=(n-2,n-1,n)$ or $(i,j,k)=(n,n-1,n-2)$.
However, as the edge $\{n-1,n\}$ is not in $\cE$, this triple does not occur in $\cE$ either, so this detour tree is not in $C$.

\item
The path from $T_{n-1}$ and $T_n=S_n(X)$ is by definition the path $P^L_{n-1, n}$ with the first tree $S_{n-1}(X\setminus\{n\})$ removed, and as the edge $\{n-1, n\}$ is not in $\cE$, all trees on this path are distinct from the ones on $C$.
The spaning tree $T_n=S_n$ is not in $C$, as the only edge on the convex hull of $X$ incident with $n$, if such an edge is present in $\cH^-$ at all, is oriented towards $n$, so the next edge along $\cE$ is not a convex hull edge.

\item
The path from $T_n = S_n$ to the predecessor of $S_1$ is exactly $P^L_{n, 1}$ with the last tree $S_1$ removed, and as the edge $\{n, 1\}$ is not in $\cE$, all trees on this path are distinct from the ones on $C$.
The detour tree $D_{n,1,k'}$ is distinct from all trees on $C$ as argued in the proof of Proposition~\ref{prop:trees2r}.
\end{enumerate}

This completes the proof of Proposition~\ref{prop:trees2r-1}.
\end{proof}

\begin{proposition}
\label{prop:trees-small}
For any point set $X$ with $n=4$ points in general position, there is a $2$-rainbow cycle in $G_X^\tS$.
For any point set $X$ with $n=5$ points in general position and any $r\in\{2,3,4\}$, there is an $r$-rainbow cycle in $G_X^\tS$.
\end{proposition}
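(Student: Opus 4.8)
The plan is to observe that the target rainbow values in this proposition are precisely those that the constructions of Propositions~\ref{prop:trees2r} and~\ref{prop:trees2r-1} would produce at $n=4$ and $n=5$, and to run those same constructions while verifying their simplicity directly in the regime $n<6$, where the reconstruction lemmas are unavailable. Indeed, $\lfloor(n-1)/2\rfloor=1$ for $n=4$, so the even construction with $r=1$ is the only relevant one and targets the $2$-rainbow cycle; and $\lfloor(n-1)/2\rfloor=2$ for $n=5$, so the even construction ($r=1,2$) targets the $2$- and $4$-rainbow cases while the odd construction ($r=2$) targets the $3$-rainbow case, covering exactly $r\in\{2,3,4\}$. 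In every case the rainbow count is automatic from the construction, so the only thing left to establish is that the resulting closed tour visits no spanning tree twice.

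For $n=4$ there are only two order types of points in general position: convex position, and a triangle with one interior point. In each case Walecki's theorem supplies a single Hamilton cycle $H_0$ of $K_4$; mapping it onto $K_X$ so that it runs counter-clockwise around the convex hull and forming the closed tour $C'$ from the paths $P^L_{j,k}$, $P^R_{j,k}$ exactly as in the proof of Proposition~\ref{prop:trees2r} produces a tour of length $2\binom{4}{2}=12$ with the $2$-rainbow property. First I would insert the detour trees $D_{i,j,k}$ where the general construction does, and then check directly that the $12$ trees visited are pairwise distinct. Because $\cS_X$ is tiny here, this is a short finite verification for each of the two order types; wherever a generic detour tree would collide with an intermediate tree, I would instead exhibit an explicit ad hoc flip sequence realizing the $2$-rainbow cycle.

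For $n=5$ there are three order types (convex hull a pentagon, a quadrilateral, or a triangle), and for each I would run the appropriate construction: Proposition~\ref{prop:trees2r} with $r=1,2$ for the $2$- and $4$-rainbow cycles, and Proposition~\ref{prop:trees2r-1} with $r=2$ for the $3$-rainbow cycle, each time splicing in the $1$-rainbow cycle of Theorem~\ref{thm:trees}~(i) in the odd case. The framework produces a closed tour with the correct rainbow property on the nose, so again everything reduces to checking that the tour is a simple cycle.

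The main obstacle is exactly this simplicity check: Lemmas~\ref{lem:Pij-disjoint} and~\ref{lem:detour}, which guarantee that $C'$ repeats only stars and that every detour tree uniquely determines its triple $(i,j,k)$, are proved only for $n\geq 6$, and for $n\in\{4,5\}$ their reconstruction arguments are no longer available, since the caterpillars involved are so short that trees of different origins can share the same degree sequence or the same central-path structure. I therefore expect that for some order types the verbatim construction produces a repeated tree and must be repaired locally, either by reorienting the Walecki cycles, by choosing a different Eulerian tour $\cE$, or by replacing an offending detour by an explicit alternative flip. Since for each fixed small $n$ and each order type the flip graph $G_X^\tS$ is finite and small, all of these checks and repairs reduce to a finite case analysis that can be carried out and certified by computer, which is how I would complete the proof.
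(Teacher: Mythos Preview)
Your proposal is correct and essentially matches the paper's approach: the paper likewise runs the constructions of Propositions~\ref{prop:trees2r} and~\ref{prop:trees2r-1} on each order type for $n=4$ and $n=5$, notes that the reconstruction lemmas fail below $n=6$, and repairs the few resulting collisions by local ad hoc replacements (e.g.\ swapping an offending detour tree for an unused star), presenting the finished cycles explicitly in figures rather than appealing to a computer check.
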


The rainbow cycles for proving Proposition~\ref{prop:trees-small} can be constructed explicitly using slight variants of the methods described in the preceding proofs.
This proof is deferred to the appendix.

\begin{proof}[Proof of Theorem~\ref{thm:trees}~(ii)]
Combine Proposition~\ref{prop:trees2r}, Proposition~\ref{prop:trees2r-1} and Proposition~\ref{prop:trees-small}. 
\end{proof}

\section{Matchings}
\label{sec:match}

In this section we consider a set of $n=2m$ points in convex position labeled clockwise by $1,2,\dots,n$. 
Without loss of generality we assume that the points are distributed equidistantly on a unit circle centered at the origin.
We use $\cM_m$ to denote the set of all non-crossing perfect matchings with $m$ edges on these points.
The graph $G_m^\tM$ has $\cM_m$ as its vertex set, and an arc $(M,M')$ between any two matchings $M$ and $M'$ that differ in exchanging two edges $e=\{a,b\}\in M$ and $f=\{c,d\}\in M$ for the edges $e'=\{a,c\}$ and $f'=\{b,d\}\in M'$; see Figure~\ref{fig:flip}~(b).
We refer to this operation as a \emph{flip}.
Furthermore, we label the arc $(M,M')$ with the edges $e'$ and $f'$, so an arc is labeled with the edges that enter the matching in this flip.
The set of arc labels of $G_m^\tM$ is $E_m:=\{\{i,j\}\mid i,j\in [n]\text{ and $j-i$ is odd}\}$.
In this definition, the difference $j-i$ must be odd so that an even number of points lies on either side of the edge $\{i,j\}$.
Every arc of $G_m^\tM$ carries two such labels, and we think of these labels as colors.
An $r$-rainbow cycle in $G_m^\tM$ is a directed cycle along which every label in $E_m$ appears exactly $r$ times.
As every arc is labeled with two edges, an $r$-rainbow cycle has length $r|E_m|/2=rm^2/2$.
The number of vertices of $G_m^\tM$ is the $m$-th Catalan number $\frac{1}{m + 1}\binom{2m}{m}$.

The following theorem summarizes the results of this section.

\begin{theorem}
\label{thm:match}
The flip graph of non-crossing perfect matchings $G_m^\tM$, $m\geq 2$, has the following properties:
\begin{enumerate}[label=(\roman*)]
\item If $m$ is odd, then $G_m^\tM$ has no 1-rainbow cycle.
\item If $m\in\{6,8,10\}$, then $G_m^\tM$ has no 1-rainbow cycle.
\item If $m\in\{2,4\}$, then $G_m^\tM$ has a 1-rainbow cycle, and if $m\in\{6,8\}$, then $G_m^\tM$ has a 2-rainbow cycle.
\end{enumerate}
\end{theorem}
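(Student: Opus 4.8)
For part~(i) I would simply invoke the length formula recorded above: any $r$-rainbow cycle in $G_m^\tM$ has exactly $r\lvert E_m\rvert/2 = rm^2/2$ arcs, because every arc carries two labels and each of the $m^2$ labels must be hit exactly $r$ times. The number of arcs of a cycle is a positive integer, so for $r=1$ we need $m^2/2\in\mathbb{Z}$, i.e.\ $m^2$ even, i.e.\ $m$ even. Hence no 1-rainbow cycle can exist when $m$ is odd. This is a pure parity count and should be the cheapest of the three parts.

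For the existence claims in part~(iii) it is enough to exhibit one witness in each case, so my plan is to construct these explicitly and record them as flip sequences (or figures). For $m=2$ there are only the two non-crossing matchings $\{\{1,2\},\{3,4\}\}$ and $\{\{1,4\},\{2,3\}\}$ of a square; the flip between them and its reverse form a $2$-arc closed walk whose arcs carry the labels $\{1,4\},\{2,3\}$ and $\{1,2\},\{3,4\}$, i.e.\ exactly $E_2$, so this is already a 1-rainbow cycle. For $m=4$ (a 1-rainbow cycle of length $16/2=8$ among the $14$ matchings) and for $m=6,8$ (2-rainbow cycles of length $36$ and $64$), I would exhibit concrete flip sequences that return to the start, then verify directly that each color of $E_m$ occurs the prescribed number of times and that no matching repeats. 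Such witnesses are small enough to be located by a short search and then checked by hand.

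The genuinely hard part is~(ii): ruling out a 1-rainbow cycle for $m\in\{6,8,10\}$, where the parity obstruction of~(i) is \emph{absent} (the would-be lengths $18$, $32$, $50$ are all integers), so a single witness no longer settles anything and one must exclude all candidates. My plan is an exhaustive verification on the explicitly generated flip graph $G_m^\tM$, whose vertex set has size equal to the $m$-th Catalan number, namely $132$, $1430$, and $16796$ for $m=6,8,10$. Concretely, I would generate all non-crossing perfect matchings, build the directed flip graph with its two labels per arc, and search for a directed closed walk meeting every color class of $E_m$ exactly once. Since this is essentially a constrained cycle-cover/exact-cover problem, the obstacle is taming the search: I would fix a canonical starting arc using the dihedral symmetry of the $n$-gon, prune a partial walk as soon as a color repeats or a still-needed color becomes unreachable, and, should plain backtracking be too slow for $m=10$, encode the rainbow condition as an integer program or SAT instance and certify infeasibility. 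The main difficulty here—and the reason no statement is claimed for general even $m$—is that I do not expect a clean analytic invariant to rule out these cases; the non-existence seems to be a genuinely combinatorial fact whose correctness must rest on careful symmetry reduction together with a truly exhaustive (rather than heuristic) search.
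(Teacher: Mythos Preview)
Your parts (i) and (iii) match the paper: the parity count for (i) is verbatim, and for (iii) the paper likewise exhibits explicit witnesses (for $m=6,8$ it uses a rotational trick---a short flip path from a matching $M$ to a rotated copy $M'$, then iterated around the polygon---but this is just a tidy way of writing down the witness you propose to search for).

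For part (ii) your brute-force plan on the full flip graph is correct in principle but misses a structural lemma that the paper proves first, and that you explicitly say you do not expect to exist. Define the \emph{length} of a matching edge as the minimum number of edges on either side of it. The paper shows by an averaging argument that along any $r$-rainbow cycle every flip must involve a \emph{centered} $4$-gon, one whose four edge lengths sum to the maximum value $m-2$ (equivalently, one containing the center of the $2m$-gon): the average length over all of $E_m$ is $(m-2)/4$, a centered flip has exactly this average over its four participating edges, and any non-centered flip has strictly smaller average, so a rainbow cycle cannot afford even one non-centered flip. This confines any rainbow cycle to the much sparser subgraph $H_m\subseteq G_m^\tM$ of centered flips. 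With this reduction the paper disposes of $m=6$ entirely by hand---$H_6$ has only eight components, most are trees, and the single interesting component admits a short structural case analysis---and resorts to computer search only inside $H_m$ for $m\in\{8,10\}$. So there \emph{is} a clean analytic constraint on rainbow cycles; it does not by itself prove non-existence, but it converts the exhaustive verification from a potentially heavy search on thousands of vertices into a modest one, and it delivers $m=6$ without a machine.
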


\subsection{Proof of Theorem~\ref{thm:match}~(i)}

\begin{proof}
A 1-rainbow cycle must have length $m^2/2$.
For odd $m$, this number is not integral, so there can be no such cycle.
\end{proof}

\subsection{Proof of Theorem~\ref{thm:match}~(ii)}
\label{sec:centered}

In view of part (i) of Theorem~\ref{thm:match}, we assume for the rest of this section that the number of matching edges $m$ is even.

The following definitions are illustrated in Figure~\ref{fig:centered}.
The \emph{length} of a matching edge $e\in M$, denoted by $\ell(e)$, is the minimum number of other edges from $M$ that lie on either of its two sides.
Consequently, a matching edge on the convex hull has length 0, whereas the maximum possible length is $(m-2)/2$, so there are $m/2$ different edge lengths.
Note that $E_m$ contains exactly $n=2m$ edges of each length.

\begin{figure}[h]
\begin{center}
\newcommand{\matching}[1]{
\foreach \i in {1,...,#1} {
  \node[node_black] (\i) at (-\i*360/#1-90+180/#1:1.5) {};
}
}

\begin{tikzpicture}[scale=0.8]

\matching{16}

\begin{pgfonlayer}{background}
\draw[draw=none, fill=black!25!white] (1.center) -- (4.center) -- (9.center) -- (16.center) -- cycle;
\end{pgfonlayer}

\path[line_solid] (-0.1,-0.1) -- (0.1,0.1);
\path[line_solid] (-0.1,0.1) -- (0.1,-0.1);

\path[line_solid] (1) to (4);
\path[line_solid] (2) to (3);
\path[line_solid] (5) to (6);
\path[line_solid] (7) to (8);
\path[line_solid] (9) to (16);
\path[line_solid] (10) to (13);
\path[line_solid] (11) to (12);
\path[line_solid] (14) to (15);

\draw (0,-1.2) node {0};
\draw (-0.7,-0.7) node {1};
\draw (-0.5,0.3) node {2};
\draw (0.05,0.5) node {3};
\draw (-5,1) node {$0+1+2+3=6$};
\draw (-3.06,.4) node {$=m-2$};
\end{tikzpicture}\hspace{2cm}
\begin{tikzpicture}[scale=0.8]

\matching{16}

\begin{pgfonlayer}{background}
\draw[draw=none, fill=black!25!white] (1.center) -- (4.center) -- (7.center) -- (8.center) -- cycle;
\end{pgfonlayer}

\path[line_solid] (-0.1,-0.1) -- (0.1,0.1);
\path[line_solid] (-0.1,0.1) -- (0.1,-0.1);

\path[line_solid] (1) to (4);
\path[line_solid] (2) to (3);
\path[line_solid] (5) to (6);
\path[line_solid] (7) to (8);
\path[line_solid] (9) to (16);
\path[line_solid] (10) to (13);
\path[line_solid] (11) to (12);
\path[line_solid] (14) to (15);

\draw (-0.7,-0.7) node {1};
\draw (-0.95,0.5) node {1};
\draw (-0.6,1) node {0};
\draw (-0.5,0) node {3};
\draw (3.5,1) node {$1+1+0+3=5$};
\draw (5.4,.4) node {$<m-2$};
\end{tikzpicture}
\caption{Examples of a centered 4-gon (left) and a non-centered 4-gon (right) for $m=8$ matching edges.
The numbers are the edge lengths.} 
\label{fig:centered}
\end{center}
\end{figure}

We call a convex quadrilateral formed by four edges from $E_m$ a \emph{centered 4-gon}, if the sum of the edge lengths of the quadrilateral is $m-2$.
Note that this is the maximum possible value.
We refer to a flip involving a centered 4-gon as a \emph{centered flip}.
Equivalently, a flip is centered if and only if the corresponding 4-gon contains the origin.
Note that all flips in the rainbow cycle in Figure~\ref{fig:flip}~(b) are centered flips.
This is in fact not a coincidence, as shown by the following lemma.

\begin{lemma}
\label{lem:centered}
All flips along an $r$-rainbow cycle in $G_m^\tM$ must be centered flips.
\end{lemma}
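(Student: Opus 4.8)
The plan is to prove the lemma by a global double-counting argument based on edge lengths, using the fact that a flip is centered exactly when its quadrilateral has edge-length sum equal to the maximum value $m-2$. The starting observation is that every flip $(e,f)\mapsto(e',f')$ acts on a convex quadrilateral whose four sides are precisely the two leaving edges $e,f$ and the two entering edges $e',f'$, all of which are elements of $E_m$ (matching edges have an even number of points on each side). By the remark preceding the lemma, the sum of the four side lengths of any such quadrilateral is at most $m-2$, with equality if and only if the quadrilateral contains the origin, i.e.\ the flip is centered. Thus it suffices to show that $\ell(e)+\ell(f)+\ell(e')+\ell(f')=m-2$ for \emph{every} flip along the rainbow cycle, and for this I only need to show that the \emph{average} of this quantity over all flips already equals the maximum $m-2$; maximality of the average then forces every individual flip to attain the maximum.

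The key computation is a length bookkeeping over the whole cycle. First I would record the total length $L:=\sum_{g\in E_m}\ell(g)$. Since $E_m$ contains exactly $2m$ edges of each of the $m/2$ lengths $0,1,\dots,(m-2)/2$, one gets $L=2m\sum_{\ell=0}^{(m-2)/2}\ell=\tfrac{m^2(m-2)}{4}$. Next I would split each flip's contribution $\ell(e)+\ell(f)+\ell(e')+\ell(f')$ into a leaving part $\ell(e)+\ell(f)$ and an entering part $\ell(e')+\ell(f')$, and sum each part separately over the directed cycle. The rainbow property says each color of $E_m$ appears as an entering edge exactly $r$ times, so the entering part sums to $rL$. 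The crucial companion fact is that each color also \emph{leaves} exactly $r$ times: along a closed directed walk the number of times a fixed edge $g$ is added equals the number of times it is removed, so its $r$ entries are matched by $r$ exits. Hence the leaving part also sums to $rL$, and the grand total over all flips is $2rL=\tfrac{r\,m^2(m-2)}{2}$.

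Finally I would divide by the number of flips. An $r$-rainbow cycle has length $r\lvert E_m\rvert/2=rm^2/2$ (each arc carries two labels), so the average four-edge length-sum per flip is $\tfrac{2rL}{rm^2/2}=\tfrac{4L}{m^2}=m-2$. Since every individual flip contributes at most $m-2$ yet the average equals $m-2$, every flip must contribute exactly $m-2$ and is therefore centered, proving the lemma. The step I expect to require the most care is the conservation argument that every color leaves exactly $r$ times (equivalently, that the reversed cycle is again an $r$-rainbow cycle, which interchanges entering and leaving edges); I would also want to state cleanly that each flip's quadrilateral has all four sides in $E_m$, so that the cited upper bound $m-2$ genuinely applies to \emph{every} flip and not only to centered ones.
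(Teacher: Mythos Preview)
Your proposal is correct and follows essentially the same averaging argument as the paper: both compute the average total edge length per flip (equivalently, the average length per appearing/disappearing edge) using the rainbow property, observe this average equals the maximum value $m-2$ attained only by centered flips, and conclude that every flip must be centered. Your write-up is more explicit about the intermediate quantities (the total $L$, the conservation of leaving counts), but the underlying idea is identical.
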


\begin{proof}
$E_m$ contains exactly $n=2m$ edges of each length $0,1,\dots,(m-2)/2$.
Along an $r$-rainbow cycle $C$, exactly $rn$ edges of each length appear and exactly $rn$ edges of each length disappear.
Consequently, the average length of all edges that appear or disappear along $C$ is $(m-2)/4$.
By definition, in a centered flip the average length of the four edges involved in the flip is exactly the same number $(m-2)/4$; whereas for a non-centered flip, it is strictly smaller.
Therefore, $C$ must not contain any non-centered flips.
\end{proof}

\begin{figure}[p]
\centering
\includegraphics{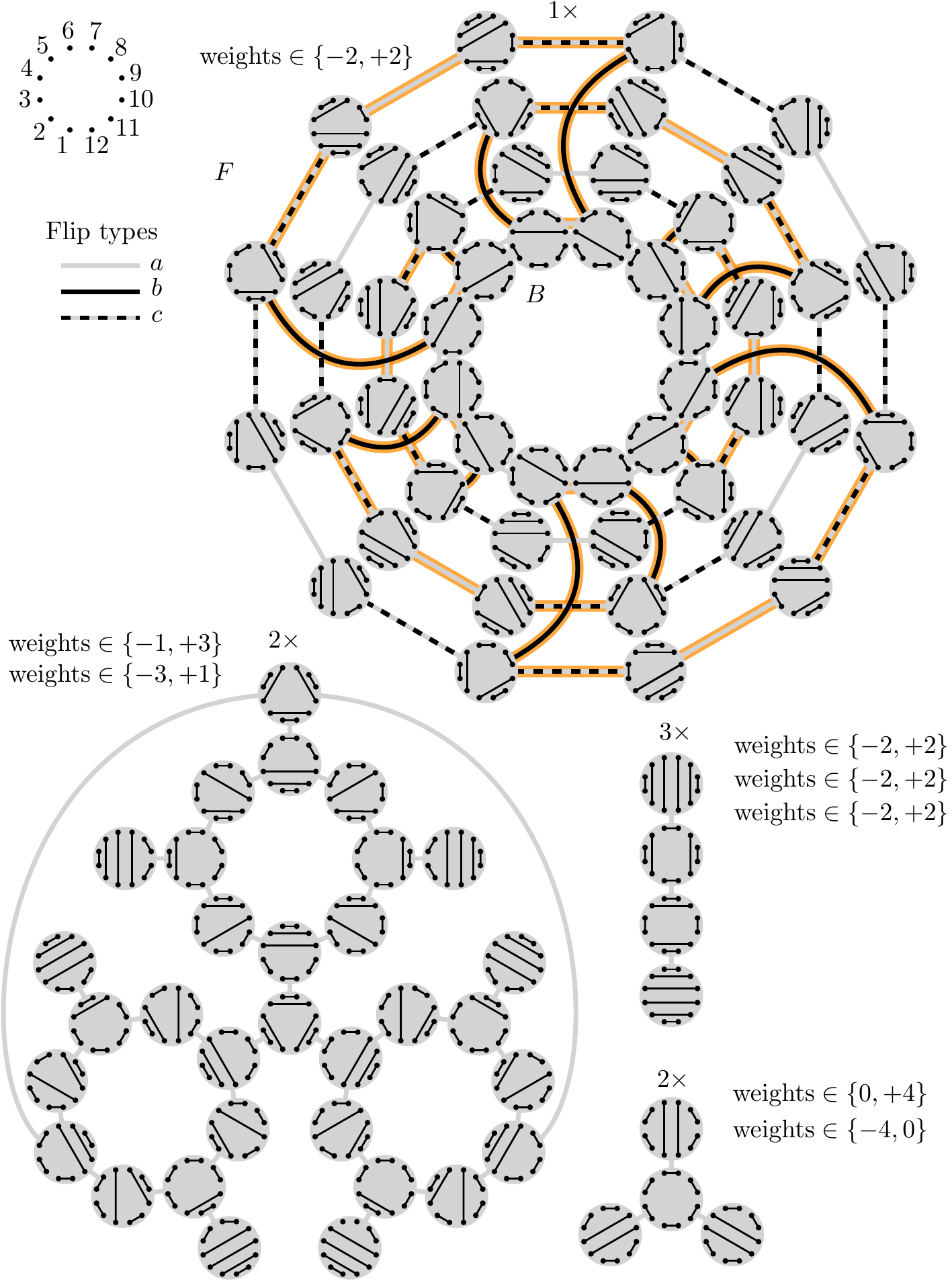}
\caption{Illustration of the graph $H_6\subseteq G_6^\tM$.
Some components of this graph are isomorphic to each other and differ only by rotation of the matchings by multiples of $\pi/6$. Only one representative for each component is shown, together with its multiplicity.
The total number of matchings is the 6th Catalan number $132$.
The 2-rainbow cycle constructed in the proof of Theorem~\ref{thm:match}~(iii) is highlighted in the component $F$.}
\label{fig:HM6}
\end{figure}

Lemma~\ref{lem:centered} suggests to restrict our search for rainbow cycles to the subgraph of $G_m^\tM$ obtained by considering only arcs that represent centered flips.
We denote this subgraph of $G_m^\tM$ on the same vertex set $\cM_m$ by $H_m$.
This graph is shown in Figure~\ref{fig:HM6} for $m=6$ matching edges.



\begin{proof}[Proof of Theorem~\ref{thm:match}~(ii)]

The proof for the cases $m=8$ and $m=10$ is computer-based, and uses exhaustive search for a 1-rainbow cycle in each connected component of $H_m$.
However, we are unable to prove this by hand.
We proceed to show that there is no 1-rainbow cycle in $H_m$ for $m=6$.
Unlike all our other non-existence proofs, this one is not a simple parity argument, but involves structural considerations.

Suppose there exists a 1-rainbow cycle $C$ in $H_m$ for $m=6$.
Clearly, $C$ has length $m^2 / 2 = 18$.
The graph $H_6$ has eight connected components; see Figure~\ref{fig:HM6}.
Five of these components are trees, and do not contain any cycles, and three of them contain cycles.
From those three cyclic components, two are isomorphic to each other and differ only by rotation of the matchings by $\pi/6$, so only one of the two is shown at the bottom left of Figure~\ref{fig:HM6}.
The only cycles in these two components are of length 8 or 12, so they do not contain the desired rainbow cycle.
It therefore remains to show that the third cyclic component $F$ shown at the top right of Figure~\ref{fig:HM6} has no rainbow cycle.
This component has a cycle $B$ of length 12 containing all matchings with a single edge of length 2.
We refer to this cycle as the \emph{base cycle} of $F$; it is drawn in the center of $F$ in the figure.
Removing the base cycle $B$ from $F$ leaves three other cycles of length 12, that differ only by rotation of the matchings in them.
We call these three cycles \emph{satellite cycles}, and we refer to the edges between the base cycle and the satellite cycles as \emph{spokes}.
Each satellite cycle is attached with 4 spokes to the base cycle, and the spokes are spaced equidistantly along both cycles, and no two spokes share any vertices.

There are three different types of centered 4-gons involved in the flips in $F$.
Each type is characterized by the cyclic sequence of edge lengths of the 4-gon, where mirroring counts as the same 4-gon (this corresponds to reversing the sequence of edge lengths).
These types are shown in Figure~\ref{fig:4gon-types}, and they denoted by $a$, $b$, and $c$.

\begin{figure}[h]
\begin{center}
\newcommand{\matching}[1]{
\foreach \i in {1,...,#1} {
  \node[node_black] (\i) at (-\i*360/#1-90+180/#1:1.5) {};
}
}

\begin{tikzpicture}[scale=0.8]

\matching{12}
    
\begin{pgfonlayer}{background}
\draw[draw=none, fill=black!25!white] (1.center) -- (6.center) -- (11.center) -- (12.center) -- cycle;
\end{pgfonlayer}

\draw (0.1,-0.3) node {$a$};

\draw (-0.6,0.3) node {2};
\draw (0.45,0.5) node {2};
\draw (0.8,-1.5) node {0};
\draw (0,-1.7) node {0};
\draw (0,-1.9) node {};  

\end{tikzpicture} \hspace{3mm}
\begin{tikzpicture}[scale=0.8]

\matching{12}
    
\begin{pgfonlayer}{background}
\draw[draw=none, fill=black!25!white] (1.center) -- (6.center) -- (9.center) -- (10.center) -- cycle;
\end{pgfonlayer}

\draw (0.2,0) node {$b$};

\draw (-0.6,0.3) node {2};
\draw (0.65,1.1) node {1};
\draw (1.7,0) node {0};
\draw (0.65,-1.1) node {1};
\draw (0,-1.9) node {};  

\end{tikzpicture} \hspace{3mm}
\begin{tikzpicture}[scale=0.8]

\matching{12}
    
\begin{pgfonlayer}{background}
\draw[draw=none, fill=black!25!white] (1.center) -- (6.center) -- (9.center) -- (12.center) -- cycle;
\end{pgfonlayer}

\draw (0.2,0) node {$c$};

\draw (-0.6,0.3) node {2};
\draw (0.65,1.1) node {1};
\draw (1.1,-0.65) node {1};
\draw (0,-1.7) node {0};
\draw (0,-1.9) node {};  

\end{tikzpicture}
\vspace{-18pt}
\end{center}
\caption{The three different types of centered 4-gons involved in the flips in $F$.}
\label{fig:4gon-types}
\end{figure}

In Figure~\ref{fig:HM6}, the edges corresponding to each flip type $a$, $b$, and $c$ are drawn solid gray, solid black and dashed black, respectively.
Note that all flips along the base cycle are of type $a$, all flips along spokes are of type $b$, and along each satellite cycle, one flip of type $a$ alternates with two flips of type $c$.


Note that the set $E_m$ of 36 edges that appear along $C$ contains exactly 12 edges of each length from $\{0,1,2\}$.
Since a flip of type $a$ does not involve any edge of length 1, and flips $b$ and $c$ involve exactly two edges of length 1, we must perform in total 12 flips of types $b$ or $c$ along $C$ (as we must have $24=2\cdot 12$ appearances or disappearances of an edge of length 1), and hence exactly 6 flips of type~$a$.
Since the edges corresponding to flips of types $b$ and $c$ form stars in $F$, it follows that every sequence of three consecutive flips along $C$ must contain at least one flip of type $a$.
Combining these two observations shows that from the 18 flips along $C$, exactly every third must be of type $a$.
Therefore, $C$ can be partitioned into six paths of length three such that each path either has the form $\beta:=(b,a,b)$ or $\sigma:=(c,a,c)$.
A path of type $\beta$ moves from a satellite cycle along a spoke to the base cycle, then uses one edge of the base cycle, and then returns via a spoke to another satellite cycle.
A path of type $\sigma$ moves along three consecutive edges of a satellite cycle, between the end vertices of two spokes.
The rainbow cycle $C$ can therefore be described by a cyclic sequence of 6 symbols from $\{\beta,\sigma\}$.
In this sequence, any $\beta$ must be followed by $\sigma$, as we must not traverse the same spoke twice.
Moreover, at most three $\sigma$ symbols can appear consecutively, otherwise we would close a cycle of length 12 along a satellite cycle.
Without loss of generality, we may assume that the maximum length substring of consecutive $\sigma$ symbols comes first in this sequence.
This leaves the following possible patterns: $(\sigma,\beta,\sigma,\beta,\sigma,\beta)$, $(\sigma,\sigma,\beta,\sigma,\sigma,\beta)$ and $(\sigma,\sigma,\sigma,\beta,\sigma,\beta)$.
A straightforward case analysis shows that for any cycle $C$ following one of those patterns, one of the matching edges of length 2 appears and disappears twice, rather than only once.
Consequently, $F$ and $H_6$ contain no 1-rainbow cycle.
\end{proof}

\subsection{Proof of Theorem~\ref{thm:match}~(iii)}

\begin{proof}
There are two non-crossing matchings with $m=2$ edges, connected by two arcs in $G_m^\tM$ that form 1-rainbow cycle.
For $m=4$, a 1-rainbow cycle in $G_m^\tM$ is shown in Figure~\ref{fig:flip}~(b).

For $m=6$, a 2-rainbow cycle in $G_m^\tM$ can be constructed using the path $P$ of length 6 between matchings $M$ and $M'$ in $G_m^\tM$ depicted in Figure~\ref{fig:Ppath6}.

\begin{figure}[h]
 \begin{center}
\newcommand{\matching}[1]{
\foreach \i in {1,...,#1} {
  \node[node_black] (\i) at (-\i*360/#1-90+180/#1:1.5) {};
}
}

\begin{tikzpicture}[scale=0.5]

\matching{12}

\begin{pgfonlayer}{background}
\draw[draw=none, fill=black!25!white] (5.center) -- (6.center) -- (9.center) -- (12.center) -- cycle;
\end{pgfonlayer}

\path[line_solid] (1) to (4);
\path[line_solid] (2) to (3);
\path[line_solid] (5) to (6);
\path[line_solid] (7) to (8);
\path[line_solid] (9) to (12);
\path[line_solid] (10) to (11);

\draw (0,0) node {$M$};

\end{tikzpicture} \hspace{1mm}
  \begin{tikzpicture}[scale=0.5]

\matching{12}

\begin{pgfonlayer}{background}
\draw[draw=none, fill=black!25!white] (5.center) -- (10.center) -- (11.center) -- (12.center) -- cycle;
\end{pgfonlayer}

\path[line_solid] (1) to (4);
\path[line_solid] (2) to (3);
\path[line_solid] (5) to (12);
\path[line_solid] (6) to (9);
\path[line_solid] (7) to (8);
\path[line_solid] (10) to (11);

\end{tikzpicture} \hspace{1mm}
  \begin{tikzpicture}[scale=0.5]

\matching{12}

\begin{pgfonlayer}{background}
\draw[draw=none, fill=black!25!white] (1.center) -- (4.center) -- (5.center) -- (10.center) -- cycle;
\end{pgfonlayer}

\path[line_solid] (1) to (4);
\path[line_solid] (2) to (3);
\path[line_solid] (5) to (10);
\path[line_solid] (6) to (9);
\path[line_solid] (7) to (8);
\path[line_solid] (11) to (12);

\end{tikzpicture} \hspace{1mm}
  \begin{tikzpicture}[scale=0.5]

\matching{12}

\begin{pgfonlayer}{background}
\draw[draw=none, fill=black!25!white] (1.center) -- (6.center) -- (9.center) -- (10.center) -- cycle;
\end{pgfonlayer}
    
\path[line_solid] (1) to (10);
\path[line_solid] (2) to (3);
\path[line_solid] (4) to (5);
\path[line_solid] (6) to (9);
\path[line_solid] (7) to (8);
\path[line_solid] (11) to (12);

\end{tikzpicture} \hspace{1mm}
  \begin{tikzpicture}[scale=0.5]

\matching{12}
    
\begin{pgfonlayer}{background}
\draw[draw=none, fill=black!25!white] (1.center) -- (6.center) -- (11.center) -- (12.center) -- cycle;
\end{pgfonlayer}

\path[line_solid] (1) to (6);
\path[line_solid] (2) to (3);
\path[line_solid] (4) to (5);
\path[line_solid] (7) to (8);
\path[line_solid] (9) to (10);
\path[line_solid] (11) to (12);

\end{tikzpicture} \hspace{1mm}
  \begin{tikzpicture}[scale=0.5]

\matching{12}
    
\begin{pgfonlayer}{background}
\draw[draw=none, fill=black!25!white] (2.center) -- (3.center) -- (6.center) -- (11.center) -- cycle;
\end{pgfonlayer}

\path[line_solid] (1) to (12);
\path[line_solid] (2) to (3);
\path[line_solid] (4) to (5);
\path[line_solid] (6) to (11);
\path[line_solid] (7) to (8);
\path[line_solid] (9) to (10);

\end{tikzpicture} \hspace{1mm}
  \begin{tikzpicture}[scale=0.5]

\matching{12}
    
\path[line_solid] (1) to (12);
\path[line_solid] (2) to (11);
\path[line_solid] (3) to (6);
\path[line_solid] (4) to (5);
\path[line_solid] (7) to (8);
\path[line_solid] (9) to (10);

\draw (0,0) node {$M'$};

\end{tikzpicture} \hspace{1mm}
 \end{center}
 \caption{Definition of path $P$ in  $G_6^\tM$ from $M$ to $M'$.}
 \label{fig:Ppath6}
\end{figure}

The gray areas in the figure highlight the quadrilaterals involved in each flip from left to right.
Note that $M'$ differs from $M$ by a clockwise rotation by an angle of $\alpha:=2\pi/6$.
It is easy to check that repeating this flip sequence six times, rotating all flips by an angle of $\alpha\cdot i$ for $i=0,1,\dots,5$, yields a cycle $C$ in $G_m^\tM$.
This cycle is highlighted in Figure~\ref{fig:HM6}.
To verify that $C$ is indeed a 2-rainbow cycle, consider the 12 matching edges that appear along the path $P$ and are shown in Figure~\ref{fig:6flipped-edges}.

\begin{figure}[htb]
 \begin{center}
\newcommand{\matching}[1]{
\foreach \i in {1,...,#1} {
  \node[node_black] (\i) at (-\i*360/#1-90+180/#1:1.5) {};
}
}

\begin{tikzpicture}[scale=0.8]

\matching{12}
    
\path[line_solid] (12) to (5);
\path[line_solid] (1) to (6);
\path[line_solid] (5) to (10);
\path[line_solid] (6) to (11);

\path[line_dashed] (1) to (10);
\path[line_dashed] (2) to (11);
\path[line_dashed] (3) to (6);
\path[line_dashed] (6) to (9);

\path[line_dotted] (12) to (1);
\path[line_dotted] (12) to (11);
\path[line_dotted] (4) to (5);
\path[line_dotted] (9) to (10);

\end{tikzpicture}
\end{center}
\caption{The 12 matching edges that appear along the path $P$ in $G_6^\tM$.}
\label{fig:6flipped-edges}
\end{figure}

In the figure we differentiate the different lengths of the matching edges by three different line styles solid, dashed or dotted.
Note that there are four edges from each length.
It is straightforward to check that rotating this figure by $\alpha\cdot i$ for $i=0,1,\dots,5$ covers each matching edge from $E_m$ exactly twice. Consequently, $C$ is a 2-rainbow cycle in $G_6^\tM$.

For $m=8$, a 2-rainbow cycle in $G_m^\tM$ can be constructed using the path $P$ of length 8 between matchings $M$ and $M'$ depicted in Figure~\ref{fig:Ppath8}.

\begin{figure}[h!]
\begin{center}
\newcommand{\matching}[1]{
\foreach \i in {1,...,#1} {
  \node[node_black] (\i) at (-\i*360/#1-90+180/#1:1.5) {};
}
}

\begin{tikzpicture}[scale=0.5]

\matching{16}

\begin{pgfonlayer}{background}
\draw[draw=none, fill=black!25!white] (1.center) -- (6.center) -- (9.center) -- (10.center) -- cycle;
\end{pgfonlayer}

\path[line_solid] (1) to (6);
\path[line_solid] (2) to (3);
\path[line_solid] (4) to (5);
\path[line_solid] (7) to (8);
\path[line_solid] (9) to (10);
\path[line_solid] (11) to (12);
\path[line_solid] (13) to (16);
\path[line_solid] (14) to (15);

\draw (0,0) node {$M$};

\end{tikzpicture}
\begin{tikzpicture}[scale=0.5]

\matching{16}

\begin{pgfonlayer}{background}
\draw[draw=none, fill=black!25!white] (1.center) -- (4.center) -- (5.center) -- (10.center) -- cycle;
\end{pgfonlayer}

\path[line_solid] (1) to (10);
\path[line_solid] (2) to (3);
\path[line_solid] (4) to (5);
\path[line_solid] (6) to (9);
\path[line_solid] (7) to (8);
\path[line_solid] (11) to (12);
\path[line_solid] (13) to (16);
\path[line_solid] (14) to (15);

\end{tikzpicture}
\begin{tikzpicture}[scale=0.5]

\matching{16}

\begin{pgfonlayer}{background}
\draw[draw=none, fill=black!25!white] (5.center) -- (10.center) -- (13.center) -- (16.center) -- cycle;
\end{pgfonlayer}

\path[line_solid] (1) to (4);
\path[line_solid] (2) to (3);
\path[line_solid] (5) to (10);
\path[line_solid] (6) to (9);
\path[line_solid] (7) to (8);
\path[line_solid] (11) to (12);
\path[line_solid] (13) to (16);
\path[line_solid] (14) to (15);

\end{tikzpicture}
\begin{tikzpicture}[scale=0.5]

\matching{16}

\begin{pgfonlayer}{background}
\draw[draw=none, fill=black!25!white] (5.center) -- (6.center) -- (9.center) -- (16.center) -- cycle;
\end{pgfonlayer}

\path[line_solid] (1) to (4);
\path[line_solid] (2) to (3);
\path[line_solid] (5) to (16);
\path[line_solid] (6) to (9);
\path[line_solid] (7) to (8);
\path[line_solid] (10) to (13);
\path[line_solid] (11) to (12);
\path[line_solid] (14) to (15);

\end{tikzpicture}
\begin{tikzpicture}[scale=0.5]

\matching{16}

\begin{pgfonlayer}{background}
\draw[draw=none, fill=black!25!white] (1.center) -- (4.center) -- (9.center) -- (16.center) -- cycle;
\end{pgfonlayer}

\path[line_solid] (1) to (4);
\path[line_solid] (2) to (3);
\path[line_solid] (5) to (6);
\path[line_solid] (7) to (8);
\path[line_solid] (9) to (16);
\path[line_solid] (10) to (13);
\path[line_solid] (11) to (12);
\path[line_solid] (14) to (15);

\end{tikzpicture}
\begin{tikzpicture}[scale=0.5]

\matching{16}

\begin{pgfonlayer}{background}
\draw[draw=none, fill=black!25!white] (4.center) -- (9.center) -- (10.center) -- (13.center) -- cycle;
\end{pgfonlayer}

\path[line_solid] (1) to (16);
\path[line_solid] (2) to (3);
\path[line_solid] (4) to (9);
\path[line_solid] (5) to (6);
\path[line_solid] (7) to (8);
\path[line_solid] (10) to (13);
\path[line_solid] (11) to (12);
\path[line_solid] (14) to (15);

\end{tikzpicture}
\begin{tikzpicture}[scale=0.5]

\matching{16}

\begin{pgfonlayer}{background}
\draw[draw=none, fill=black!25!white] (4.center) -- (11.center) -- (12.center) -- (13.center) -- cycle;
\end{pgfonlayer}

\path[line_solid] (1) to (16);
\path[line_solid] (2) to (3);
\path[line_solid] (4) to (13);
\path[line_solid] (5) to (6);
\path[line_solid] (7) to (8);
\path[line_solid] (9) to (10);
\path[line_solid] (11) to (12);
\path[line_solid] (14) to (15);

\end{tikzpicture}
\begin{tikzpicture}[scale=0.5]

\matching{16}

\begin{pgfonlayer}{background}
\draw[draw=none, fill=black!25!white] (4.center) -- (11.center) -- (14.center) -- (15.center) -- cycle;
\end{pgfonlayer}

\path[line_solid] (1) to (16);
\path[line_solid] (2) to (3);
\path[line_solid] (4) to (11);
\path[line_solid] (5) to (6);
\path[line_solid] (7) to (8);
\path[line_solid] (9) to (10);
\path[line_solid] (12) to (13);
\path[line_solid] (14) to (15);

\end{tikzpicture}
\begin{tikzpicture}[scale=0.5]

\matching{16}

\path[line_solid] (1) to (16);
\path[line_solid] (2) to (3);
\path[line_solid] (4) to (15);
\path[line_solid] (5) to (6);
\path[line_solid] (7) to (8);
\path[line_solid] (9) to (10);
\path[line_solid] (11) to (14);
\path[line_solid] (12) to (13);

\draw (0,0) node {$M'$};

\end{tikzpicture}
\end{center}
\caption{Definition of path $P$ between matchings $M$ and $M'$ in $G_8^\tM$.}
\label{fig:Ppath8}
\end{figure}

Note that $M'$ differs from $M$ by a counter-clockwise rotation by an angle of $\alpha:=2\pi/8$.
It is easy to check that repeating this flip sequence eight times, rotating all flips by an angle of $\alpha\cdot i$ for $i=0,1,\dots,7$, yields a cycle $C$ in $G_m^\tM$.
To verify that $C$ is indeed a 2-rainbow cycle, consider the 16 matching edges that appear along the path $P$ shown in Figure~\ref{fig:8flipped-edges}.

\begin{figure}[h]
 \begin{center}
\newcommand{\matching}[1]{
\foreach \i in {1,...,#1} {
  \node[node_black] (\i) at (-\i*360/#1-90+180/#1:1.5) {};
}
}

\begin{tikzpicture}[scale=0.8]

\matching{16}
    
\path[line_solid] (1) to (10);
\path[line_solid] (9) to (16);
\path[line_solid] (4) to (13);
\path[line_solid] (4) to (11);

\path[line_dashed] (5) to (10);
\path[line_dashed] (5) to (16);
\path[line_dashed] (4) to (9);
\path[line_dashed] (4) to (15);

\path[line_dashdotted] (6) to (9);
\path[line_dashdotted] (1) to (4);
\path[line_dashdotted] (10) to (13);
\path[line_dashdotted] (11) to (14);

\path[line_dotted] (5) to (6);
\path[line_dotted] (1) to (16);
\path[line_dotted] (9) to (10);
\path[line_dotted] (12) to (13);

\end{tikzpicture}
\end{center}
\caption{The 16 matching edges that appear along the path $P$ in $G_8^\tM$.}
\label{fig:8flipped-edges}
\end{figure}

The different lengths of the matching edges are visualized by four different line styles solid, dashed, dash dotted, and dotted.
Note that there are four edges from each length.
It is straightforward to check that rotating this figure by $\alpha\cdot i$ for $i=0,1,\dots,7$ covers each matching edge from $E_m$ exactly twice.  Consequently, $C$ is a 2-rainbow cycle in $G_8^\tM$.
\end{proof}

\subsection{Structure of the graph $H_m$}

In this section we prove that the graph $H_m$ has at least $m-1$ connected components (Theorem~\ref{thm:Hm-components} below).

The following definitions and Lemma~\ref{lem:weight-change} are illustrated in Figure~\ref{fig:weight}.
Consider a matching $M\in\cM_m$ and one of its edges $e\in M$, and let $i$ and $j$ be the endpoints of $e$ so that the origin lies to the right of the ray from $i$ to $j$.
We define the \emph{sign of the edge $e$} as
\begin{equation*}
\sgn(e):=\begin{cases}
          +1 & \text{if $i$ is odd}, \\
          -1 & \text{if $i$ is even}.
         \end{cases}
\end{equation*}
Moreover, we define the \emph{weight} of the matching $M$ as
\begin{equation*}
w(M):=\sum_{e \in M} \sgn(e) \cdot \ell(e).
\end{equation*}
Note that rotating a matching by $\pi/m$ changes the weight by a factor of $-1$.

\begin{figure}[htb]
\newcommand{\matching}[1]{
\foreach \i in {1,...,#1} {
  \node[node_black] (\i) at (-\i*360/#1-90+180/#1:1.5) {};
  \draw (-\i*360/#1-90+180/#1:1.8) node {\scriptsize $\i$};
}
}

\begin{tikzpicture}[scale=0.8]

\matching{16}

\begin{pgfonlayer}{background}
\draw[draw=none, fill=black!25!white] (1.center) -- (4.center) -- (9.center) -- (16.center) -- cycle;
\end{pgfonlayer}

\path[line_solid] (-0.1,-0.1) -- (0.1,0.1);
\path[line_solid] (-0.1,0.1) -- (0.1,-0.1);

\path[line_solid] (1) to (4);
\path[line_dashed] (2) to (3);
\path[line_solid] (5) to (6);
\path[line_solid] (7) to (8);
\path[line_solid] (9) to (16);
\path[line_dashed] (10) to (13);
\path[line_solid] (11) to (12);
\path[line_dashed] (14) to (15);

\foreach \i in {1,3,5,7,9,11,13,15}
\draw[fill=white] (\i) circle(1.8pt);

\draw (-2.5,0) node {$M$};
\draw (6,1) node {$w(M)=+(1+0+0+3+0)$};
\draw (6.1,.4) node {$-(0+1+0)$};
\draw (4.7,-.2) node {$=3$};
\draw[fill=white] (-7,0) node {};
\end{tikzpicture}
\caption{Illustration of the weight of a matching with $m=8$ edges.
Edges with sign $+1$ are drawn solid, edges with sign $-1$ are drawn dashed.
Applying the flip indicated in the figure changes the weight by $-(1+3)-(2+0)=-6=-(m-2)$.
}
\label{fig:weight}
\end{figure}

The following lemma is an immediate consequence of these definitions.

\begin{lemma}
\label{lem:weight-change}
A centered 4-gon has two edges with positive sign and two edges with negative sign, and the pairs of edges with the same sign are opposite to each other.
Consequently, applying a centered flip to any matching from $\cM_m$ changes its weight by $-(m-2)$ if the two edges with negative sign appear in this flip, or by $+(m-2)$ if the two edges with positive sign appear in this flip.
Flips of these two kinds must alternate along any sequence of centered flips.
\end{lemma}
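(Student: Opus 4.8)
The plan is to treat the three assertions in turn: establish the sign statement first, derive the weight-change formula from it using that $\ell$ is an intrinsic function of an edge, and finally bootstrap the alternation from a structural description of the edges nearest the origin.

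First I would prove the sign statement. Write the vertices of a centered 4-gon as $v_1,v_2,v_3,v_4$ in clockwise order, so its four sides are the consecutive chords $\{v_1,v_2\},\dots,\{v_4,v_1\}$, and the flip exchanges one pair of opposite sides for the other. Since the 4-gon contains the origin and no chord in $E_m$ is a diameter (as $m$ is even), the origin lies strictly inside, hence strictly on the interior side of each side-line. For a non-diameter chord the center always lies on the major-arc side, so for each side the direct arc between its two consecutive endpoints is the minor arc; thus the clockwise gap from $v_i$ to $v_{i+1}$ is less than $m$, and traversing the side clockwise from $v_i$ to $v_{i+1}$ keeps the interior, and hence the origin, on the right, so $v_i$ is the tail in the orientation defining $\sgn$. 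Because each side lies in $E_m$ this clockwise gap is odd, so $v_i$ and $v_{i+1}$ have opposite parity; therefore the parities of $v_1,v_2,v_3,v_4$ alternate, the four signs read $+,-,+,-$ around the 4-gon, and the two equal-sign edges are exactly the opposite pairs $\{v_1,v_2\},\{v_3,v_4\}$ and $\{v_2,v_3\},\{v_4,v_1\}$.

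For the weight change I would use that $\ell$ depends only on the edge, not the matching: if $\{i,j\}$ cuts off $2s$ and $2t=2(m-1-s)$ points, every matching containing it has exactly $s$ and $t$ edges on its two sides, so $\ell(\{i,j\})=\min(s,t)$. Consequently $w(M')-w(M)$ only involves the four sides of the 4-gon, the two removed sides forming one opposite pair and the two added sides the other. By the sign statement one pair consists of two positive edges and the other of two negative edges, and since the centered condition means the four side-lengths sum to $m-2$, the difference telescopes to $+(m-2)$ when the two positive edges appear and to $-(m-2)$ when the two negative edges appear.

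The alternation is the main obstacle, and I would prove it via the edges bounding the face $O$ of $M$ containing the origin; call an edge of $M$ \emph{central} if it lies on $\partial O$. The key structural claim is that all central edges of any matching share the same sign. To see this, traverse $\partial O$ clockwise, so that $O$ (hence the origin) stays on the right and each central edge is traversed from tail to head in its sign-orientation. Between two consecutive central edges the boundary follows a circle arc, and no circle point can lie strictly inside this arc, since such a point would be visible from the origin and its edge would be a central edge separating the two; hence consecutive central edges are joined by a single-step arc, so passing from one tail to the next changes the label by (odd side-gap)$\,+\,1$, an even amount, and all tails share a parity. I would then check that a valid centered flip deletes two central edges: in convex position, any $M$-edge separating the origin from a removed side would be crossed by one of the two entering edges, which is impossible. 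Thus every centered flip removes two equal-sign central edges and inserts two central edges of the opposite sign, which by the claim are the signs of \emph{all} central edges of the new matching, so each flip toggles the common sign of the central edges. Since a flip in which the positive edges appear runs from a matching with negative central edges to one with positive central edges, and vice versa, the two kinds of flip must alternate along any sequence of centered flips.
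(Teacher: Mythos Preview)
Your proof is correct and follows essentially the same approach as the paper: the paper's proof is extremely terse (it calls the sign statement ``an immediate consequence of the definitions'' and dispatches the alternation in one sentence via ``all edges visible from the origin have the same sign, and any flip must change this sign''), and your argument supplies exactly the details behind those two assertions, with your ``central edges'' being precisely the paper's ``edges visible from the origin.''
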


\begin{proof}
The first part of the lemma is an immediate consequence of the definitions given before.
To see that flips that change the weight by $-(m-2)$ or $+(m-2)$ must alternate along any sequence of centered flips, note that in any matching $M\in\cM_m$, all edges that are visible from the origin have the same sign, and any flip must change this sign.
\end{proof}

The next lemma shows that the weight of a matching lies in a specific interval.

\begin{lemma}
\label{lem:weight-range}
Given any matching $M\in\cM_m$, we have 
\begin{equation*}
w(M) \in [-(m-2), m -2] := \big\{-(m-2), -(m-2) +1, \dots, m -3, m -2 \big\}. 
\end{equation*}
For any integer $c$ in this set, there is a matching $M\in\cM_m$ with $w(M)=c$.
\end{lemma}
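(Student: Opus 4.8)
The plan is to first translate the definitions of $\sgn(e)$ and $\ell(e)$ into purely combinatorial quantities attached to the \emph{shorter arc} of each edge. I orient every matching edge $e=\{i,j\}$ as $i\to j$ so that the clockwise arc from $i$ to $j$ is the shorter of the two arcs cut off by $e$; this is well defined because $m$ is even and hence no edge of $E_m$ is a diameter (both sides carry an even, and thus unequal, number of points). A direct computation with the point coordinates shows that the origin lies to the right of the ray from $i$ to $j$ exactly when the clockwise distance from $i$ to $j$ lies in $\{1,\dots,m-1\}$; hence $i$ is the clockwise-first endpoint of the shorter arc and $\sgn(e)=(-1)^{i+1}$. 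Since the matching is non-crossing, the $2\ell(e)$ points on the shorter side are matched among themselves, so $\ell(e)$ equals the number of matching edges lying in the shorter arc of $e$. Writing $f\subset\mathrm{short}(e)$ for ``$f$ lies in the shorter arc of $e$'', I then rewrite
\[
w(M)=\sum_{e}\sgn(e)\,\ell(e)=\sum_{f}c(f),\qquad c(f):=\sum_{e:\,f\subset\mathrm{short}(e)}\sgn(e),
\]
so that $c(f)$ is the signed number of edges whose shorter arc encloses $f$.

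Next I would prove the upper bound $w(M)\le m-2$. The key structural step is that, for a fixed edge $f$, the edges enclosing $f$ form a chain $e_1\subset e_2\subset\cdots\subset e_s$ with respect to nesting of shorter arcs, and along this chain the signs strictly alternate. Indeed, between the clockwise-first endpoints of two consecutive enclosers $e_t\subset e_{t+1}$ lies a block of points that, by planarity, is matched internally; this block has an even number of points, so the two endpoints have opposite parity and thus $\sgn(e_t)=-\sgn(e_{t+1})$. Consequently each $c(f)$ is an alternating $\pm1$ sum, so $c(f)\in\{-1,0,1\}$, and $c(f)=0$ whenever $f$ has no encloser.

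To sharpen the trivial estimate $w(M)\le m$ to the claimed $m-2$, I would show that at least two edges are maximal (enclosed by no other edge) and hence satisfy $c(f)=0$. If there were a unique maximal edge $e^\ast$, then every other edge would lie in $\mathrm{short}(e^\ast)$; but that arc contains at most $(m-2)/2$ edges, far fewer than the $m-1$ remaining edges, a contradiction. Hence $|\{f:c(f)=0\}|\ge 2$, and therefore $w(M)=|\{f:c(f)=1\}|-|\{f:c(f)=-1\}|\le m-2$. The lower bound $w(M)\ge-(m-2)$ then follows at once from the observation preceding Lemma~\ref{lem:weight-change} that rotating a matching by $\pi/m$ negates its weight. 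I expect this combination---the sign alternation along encloser chains together with the counting argument forcing two ``outermost'' edges---to be the main obstacle, since it is exactly where the sharp value $m-2$ (rather than $m$) comes from.

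Finally, for the realizability claim I would exhibit explicit matchings. For $0\le k\le m-2$, I build $M$ from disjoint \emph{arches}: an arch on a consecutive block of $2\ell+2$ points starting at an odd position consists of one edge of span $2\ell+1<m$ together with $\ell$ length-$0$ edges nested inside it, and such an arch contributes exactly $+\ell$ to $w$ (its outer edge has positive sign and length $\ell$, while every other edge has length $0$). Using one arch of length $k$ when $k\le m/2-1$, and two arches of lengths $m/2-1$ and $k-(m/2-1)$ otherwise, and filling the remaining points with length-$0$ edges, realizes every weight $k\in\{0,1,\dots,m-2\}$; a point count confirms that these fit on the $2m$ points, with equality exactly at $k=m-2$, and that all arches indeed begin at odd positions. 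Negating via a single rotation then realizes every weight in $\{-(m-2),\dots,-1\}$, so the whole interval $[-(m-2),m-2]$ is attained.
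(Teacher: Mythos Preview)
Your proof is correct. It is in fact the same argument as the paper's, viewed through a different lens. The paper shoots rays $r_k$ from the origin through each odd point $k$ and writes $w(M)=\sum_k w(r_k)$, where $w(r_k)$ is the signed number of matching edges crossed by $r_k$; you instead write $w(M)=\sum_f c(f)$, where $c(f)$ is the signed number of enclosers of $f$. Under the bijection that sends an edge $f$ to its unique odd endpoint $p_f$, these decompositions coincide: since the matching is non-crossing, $p_f$ lies in $\mathrm{short}(e)$ if and only if $f\subset\mathrm{short}(e)$, so $c(f)=w(r_{p_f})$. Your chain of enclosers is exactly the sequence of edges crossed by $r_{p_f}$, and your sign-alternation argument is the same parity count the paper gives for consecutive crossings. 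Where the paper asserts without proof that at least two edges are visible from the origin (yielding two rays with weight~$0$), you give the equivalent self-contained argument that at least two edges are maximal with respect to short-arc nesting; this is a small expository improvement. Your realizability construction via arches is a concrete instance of the paper's terse ``two visible edges with lengths summing to $c$, the rest length~$0$'', and the case split at $k=m/2-1$ together with the parity check that both arches start at odd positions is handled correctly.
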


The weights of all matchings with $m=6$ edges are shown in Figure~\ref{fig:HM6}.

\begin{proof}
We fix a matching $M\in \cM_m$ throughout the proof.
For every odd $k\in[n]$ we consider the ray $r_k$ from the origin $(0,0)$ through the point $k$; see Figure~\ref{fig:rays}.
This yields a set $C(r_k)$ of points $x=r_k\cap e$ where the ray $r_k$ crosses any matching edge $e\in M$ in its interior.
We define the \emph{sign} of this crossing point $x$ as the sign of the matching edge involved, i.e., $\sgn(x):=\sgn(e)$.
Moreover, we define the \emph{weight} of the ray $r_k$ as the sum of signs of all crossing points from $C(r_k)$ along the ray, i.e., $w(r_k):=\sum_{x\in C(r_k)}\sgn(x)$.
Let $C$ be the union of all these crossing points between rays and matching edges.

Note that the number of rays that cross a fixed edge $e \in M$ is equal to $\ell(e)$, implying that
\begin{equation}
\label{eq:wM}
w(M) = \sum_{x\in C} \sgn(x)=\sum_{k\in[n]\text{ odd}} w(r_k).
\end{equation}

We claim that if we follow any ray $r_k$, then the signs of any two consecutive crossing points $x$ and $y$ along the ray alternate, i.e., $\sgn(x)+\sgn(y)=0$.
To see this let $\{i,j\}$ and $\{p,q\}$ denote the edges from $M$ causing the crossings $x$ and $y$, respectively, such that $i$ and $p$ lie to the left of the ray $r_k$, and $j$ and $q$ lie to the right.
Moreover, let $A$ be the set of points from $[n]$ between $i$ and $p$, and let $B$ be the points between $j$ and $q$; see Figure~\ref{fig:rays}.
Observe that all points from $A$ must be matched within the set, and the same holds for all points within $B$.
This is because if there was a matching edge between $A$ and $B$, then it would cross $r_k$ between $x$ and $y$.
It follows that $|A|$ and $|B|$ are even, and hence the distance between $i$ and $p$, and the distance between $j$ and $k$ are both odd.
This implies that the edges $\{i,j\}$ and $\{p,q\}$ have opposite signs, proving the claim.

\begin{figure}[htb]
\begin{center}
\includegraphics{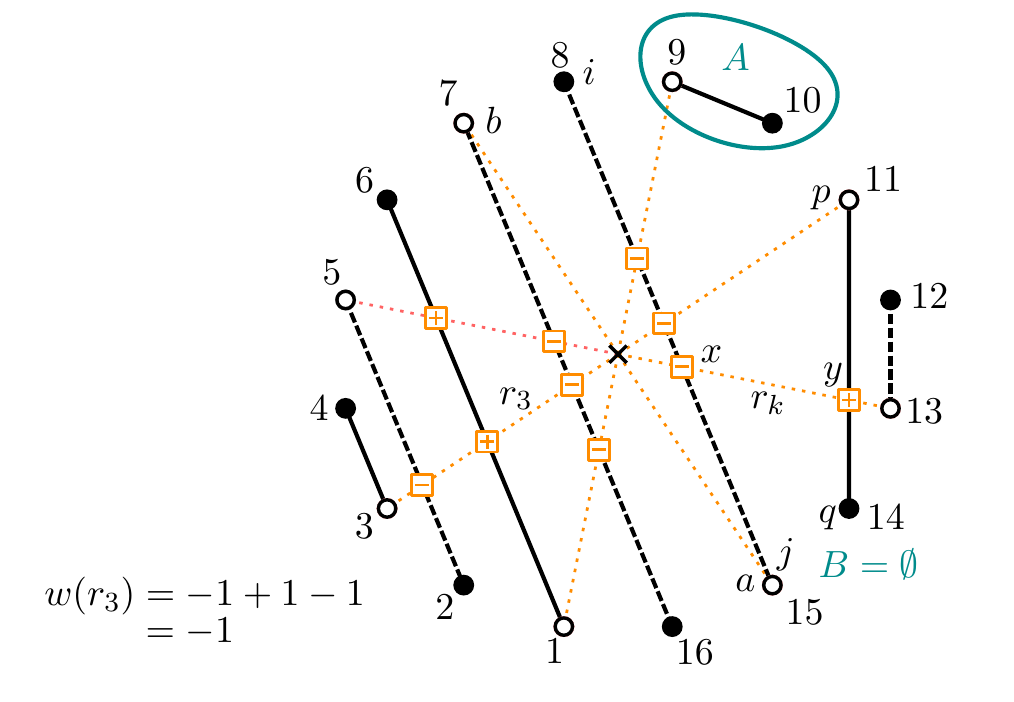}
\end{center}
\caption{Illustration of the proof of Lemma~\ref{lem:weight-range}.} 
\label{fig:rays}
\end{figure}

An immediate consequence of this claim is that for any ray $r_k$ we have $w(r_k)\in \{-1,0,1\}$.
Moreover, there are at least two matching edges visible from the origin and therefore there are two odd points $a$ and $b$ for which $r_a$ and $r_b$ do not cross any matching edges, i.e., $w(r_a) = w(r_b) = 0$.
It follows that two of the summands on the right hand side of \eqref{eq:wM} are 0, and the remaining $m-2$ summands are from the set $\{-1,0,1\}$.
This proves the first part of the lemma.

To prove the second part of the lemma, consider a matching that has exactly two edges whose lengths sum up to $c$ that are both visible from the origin $(0,0)$ and $m-2$ matching edges of length 0.
\end{proof}

Motivated by Lemma~\ref{lem:weight-change} and Lemma~\ref{lem:weight-range}, we partition the set of all matchings $\cM_m$ into sets $\cM_{m,c}$, $c\in[-(m-2),(m-2)]$, where $\cM_{m,c}$ contains all matchings with weight exactly $c$.
Moreover, we define $\cM_{m,c}^+:=\cM_{m,c}\cup \cM_{m,c-(m-2)}$ for $c=0,1,\dots,m-2$.
These two lemmas imply the following structural result about the graph $H_m$.

\begin{theorem}
\label{thm:Hm-components}
For any even $m\geq 2$, the subgraph $H_m$ of $G_m^\tM$ that uses only centered flips has no edges between any two partition classes $\cM_{m,c}^+$, $c=0,1,\dots,m-2$, and therefore at least one connected component in each partition class, in total at least $m-1$ connected components.
\end{theorem}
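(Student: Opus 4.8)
The plan is to turn the weight function into an invariant for the subgraph $H_m$, using the two preceding lemmas: Lemma~\ref{lem:weight-change} to control how $w$ changes across a single centered flip, and Lemma~\ref{lem:weight-range} to guarantee that the relevant weights are realized.

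First I would note that, by Lemma~\ref{lem:weight-change}, every arc of $H_m$ (that is, every centered flip) changes the weight of a matching by exactly $+(m-2)$ or $-(m-2)$. Hence the residue $w(M)\bmod(m-2)$ is constant along every arc of $H_m$, so no arc can connect two matchings whose weights are incongruent modulo $m-2$. The two weights $c$ and $c-(m-2)$ pooled in $\cM_{m,c}^+$ are congruent modulo $m-2$, and by Lemma~\ref{lem:weight-range} these are, for $c\in\{1,\dots,m-3\}$, exactly the two representatives of their residue class lying in the range $[-(m-2),m-2]$. A matching of weight $c>0$ can therefore only flip to weight $c-(m-2)$ (the alternative $c+(m-2)$ exceeds the range), and symmetrically a matching of weight $c-(m-2)<0$ can only flip to weight $c$; thus for $c\in\{1,\dots,m-3\}$ no arc of $H_m$ leaves $\cM_{m,c}^+$.

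The delicate case is the residue class $0$, which corresponds to the three weights $-(m-2)$, $0$ and $m-2$ and is distributed over the two classes $\cM_{m,0}^+$ and $\cM_{m,m-2}^+$; the plain residue argument here gives only $m-2$ separated classes, one short of the claim. To recover the missing class I would invoke the \emph{alternation} assertion of Lemma~\ref{lem:weight-change}: in any matching all edges visible from the origin share a common sign, and every centered flip removes two such visible edges and so switches that sign. Consequently all centered flips available at a fixed matching change the weight in the \emph{same} direction. In particular a weight-$0$ matching flips either only upward to weight $m-2$ or only downward to weight $-(m-2)$, never both. Assigning each weight-$0$ matching to $\cM_{m,m-2}^+$ in the first case and to $\cM_{m,0}^+$ in the second makes these two classes genuinely disjoint, and since the extremal weights $\pm(m-2)$ cannot flip out of range, one checks that no arc of $H_m$ runs between $\cM_{m,0}^+$ and $\cM_{m,m-2}^+$.

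Combining the two cases, the $m-1$ sets $\cM_{m,c}^+$, $c=0,1,\dots,m-2$, are pairwise arc-disjoint in $H_m$, so each is a union of connected components and no component is shared between distinct classes. Each set is nonempty by Lemma~\ref{lem:weight-range} (weight $c$ is realized for $c=1,\dots,m-2$, and weight $-(m-2)$ for $c=0$), so each contributes at least one component, yielding at least $m-1$ components in total. I expect the only real obstacle to be the bookkeeping for residue $0$: verifying carefully that the sign/alternation property of Lemma~\ref{lem:weight-change} forbids an arc between $\cM_{m,0}^+$ and $\cM_{m,m-2}^+$ (and not merely between the three individual weight levels), which is exactly what upgrades the count from $m-2$ to $m-1$.
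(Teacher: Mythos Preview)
Your argument is correct and is essentially the proof the paper leaves implicit: the paper states the theorem as a direct consequence of Lemmas~\ref{lem:weight-change} and~\ref{lem:weight-range} without spelling out any details. You are right that the plain residue argument modulo $m-2$ only separates $m-2$ classes, because with the paper's definitions the sets $\cM_{m,0}^+$ and $\cM_{m,m-2}^+$ genuinely overlap in $\cM_{m,0}$; your use of the alternation clause of Lemma~\ref{lem:weight-change} (equivalently, the common sign $s(M)$ of the edges visible from the origin) to split the weight-$0$ matchings is exactly the missing ingredient that upgrades the count to $m-1$. One cosmetic point: rather than phrasing the split as ``flips only upward'' versus ``flips only downward'', it is slightly cleaner to assign a weight-$0$ matching according to the sign $s(M)$ itself, since this is defined even for matchings that might happen to admit no centered flip; the closure of the two redefined classes then follows directly from the fact that a centered flip sends $(w(M),s(M))$ to $(w(M)-s(M)(m-2),-s(M))$.
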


As Figure~\ref{fig:HM6} shows, the subgraph of $H_m$ induced by a partition class $\cM_{m,c}^+$ is not necessarily connected, i.e., the number of connected components of $H_m$ may exceed $m-1$.
For instance, the subgraph of $H_6$ induced by the partition class $\cM_{6,2}^+$ has four connected components and the total number of components of $H_6$ is eight.

We observed empirically for $m\in\{2,4,6,8\}$ that for any $c\in[-(m-2),m-2]$, the number of matchings in $\cM_{m,c}$ is given by
\begin{equation}
\label{eq:Mmc-size}
|\cM_{m,c}|=\begin{cases}
           2 & \text{if $c=0$}, \\
           N_1(m,|c|+1)/2 & \text{if $|c|\geq 1$},
          \end{cases}
\end{equation}
where $N_r(m,k)$ are the \emph{generalized Narayana numbers}, defined as
\begin{equation*}
N_r(m,k) = \frac{r + 1}{m + 1} \binom{m + 1}{k} \binom{m - r - 1}{k - 1}
\end{equation*}
for integers $r\geq 0$ and $0\leq k\leq m-r$.
The quantity $N_r(m,k)$ counts Dyck paths in the integer lattice $\mathbb{Z}^2$ starting at the origin with $m$ upsteps $(+1,+1)$ and $m-r$ downsteps $(+1,-1)$ with exactly $k$ peaks. The Dyck path property means that such a path never moves below the abscissa.

Unfortunately, we are not able to prove \eqref{eq:Mmc-size} in general.
Proving this relation would allow us to exactly compute the cardinalities of the partition classes $\cM_{m,c}^+$ referred to in Theorem~\ref{thm:Hm-components}.





\section{Permutations}
\label{sec:perm}

In this section, we consider the set of permutations $\Pi_n$ of $[n]$.
We specify a permutation $\pi\in \Pi_n$ as $\pi=(\pi(1),\pi(2),\dots,\pi(n))$.
The graph $G_n^\tP$ has $\Pi_n$ as its vertex set, and an edge $\{\pi,\rho\}$ between any two permutations $\pi$ and $\rho$ that differ in exactly one transposition between the entries at positions $i$ and $j$; see Figure~\ref{fig:flip}~(c).
We label the edge $\{\pi,\rho\}$ of $G_n^\tP$ with the transposition $\{i,j\}$, and we think of these labels as colors.
A 1-rainbow cycle in $G_n^\tP$ is an undirected cycle along which every transposition appears exactly once, so it has length $\binom{n}{2}$.
In this section we only consider 1-rainbow cycles, and we simply refer to them as rainbow cycles.
Note that the number of vertices of $G_n^\tP$ is $n!$.

The following theorem summarizes the results of this section.

\begin{theorem}
\label{thm:perm}
The flip graph of permutations $G_n^\tP$, $n\geq 2$, has the following properties:
\begin{enumerate}[label=(\roman*)]
\item If $\lfloor n/2\rfloor$ is odd, then $G_n^\tP$ has no rainbow cycle.
\item If $\lfloor n/2\rfloor$ is even, then $G_n^\tP$ has a rainbow cycle.
\end{enumerate}
\end{theorem}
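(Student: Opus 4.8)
The plan is to handle the two parts separately, with part~(i) a short parity argument and part~(ii) an explicit construction.

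For part~(i) I would read a rainbow cycle as a product of transpositions. Starting from some permutation, the cycle applies each of the $\binom{n}{2}$ transpositions $\{i,j\}$ exactly once, in some order, and returns to its starting vertex, so the corresponding composition in $\Pi_n$ must equal the identity. Since every transposition is an odd permutation, a product of $\binom{n}{2}$ of them has sign $(-1)^{\binom{n}{2}}$, which equals $+1$ only if $\binom{n}{2}$ is even. It then remains to check, over the four residues of $n$ modulo~$4$, that $\binom{n}{2}$ and $\lfloor n/2\rfloor$ always share the same parity: writing $n=4k,\,4k+1,\,4k+2,\,4k+3$ gives $\binom{n}{2}=2k(4k-1),\,2k(4k+1),\,(2k+1)(4k+1),\,(2k+1)(4k+3)$ while $\lfloor n/2\rfloor=2k,\,2k,\,2k+1,\,2k+1$, and the parities match in every case. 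Hence if $\lfloor n/2\rfloor$ is odd then $\binom{n}{2}$ is odd, the product cannot be the identity, and no rainbow cycle exists.

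For part~(ii) it is convenient to reformulate the goal: I seek a cyclic ordering $t_1,\dots,t_{\binom{n}{2}}$ of all transpositions in which the composition $t_{\binom{n}{2}}\cdots t_1$ is the identity and, crucially, all partial products $t_s\cdots t_1$ are pairwise \emph{distinct} permutations, so that the closed walk is a genuine cycle rather than merely a closed trail. The parity obstruction of part~(i) disappears exactly when $\lfloor n/2\rfloor$ is even, i.e.\ $n\equiv 0,1\pmod 4$, so the admissible sizes split into the two progressions $4,8,12,\dots$ and $5,9,13,\dots$. I would therefore argue by two interleaved inductions that each step by~$4$, based on the explicit small cases $n=4$ and $n=5$. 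For $n=4$ one can apply the three perfect matchings $\{1,2\},\{3,4\}$, then $\{1,3\},\{2,4\}$, then $\{2,3\},\{1,4\}$ of a $1$-factorization of $K_4$; the three fixed-point-free involutions generate the Klein four-group and hence multiply to the identity, and one checks directly that the six intermediate permutations are distinct. As in the triangulation and tree proofs of this paper, I expect to strengthen the induction hypothesis so that it also fixes a designated subpath of the cycle, providing a clean place at which to splice in the new transpositions.

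For the step from $n$ to $n+4$ the new colors are the $4n+6$ transpositions meeting at least one of the four new positions $n+1,\dots,n+4$; this count is even, so parity is preserved. Keeping the values $n+1,\dots,n+4$ in their home positions along the old cycle, I would reroute the cycle at its designated subpath through a family of new permutations that together realize all $4n+6$ new transpositions and restore the old coordinates at the end, using a $1$-factorization of the complete graph on the four new positions together with a round-robin pairing of the new positions against the old ones; I would aim to choose this swap schedule so that its net effect on the new coordinates is the identity. The main obstacle, exactly as in the analogous arguments for triangulations and spanning trees, is not the bookkeeping of colors—a routine count once the detour is fixed—but proving distinctness, that no permutation is visited twice. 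I would address this by attaching to each permutation on the detour a distinguishing invariant, such as the set of new positions currently holding a displaced value, or the support of the permutation restricted to the new coordinates, mirroring the role of the \emph{bi-centered} triangulations in Theorem~\ref{thm:triang} and of the caterpillar degree sequences in Lemma~\ref{lem:intermediate}. Verifying that this invariant genuinely separates the detour from the old cycle throughout the construction is where I expect the bulk of the work to lie.
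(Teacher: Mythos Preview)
Your part~(i) is correct and is the same parity obstruction the paper uses (phrased there as ``$G_n^\tP$ is bipartite, so it has no cycle of odd length $\binom{n}{2}$'').

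For part~(ii), the outline is in the right spirit but the inductive step is not yet a proof: you describe rerouting through ``a family of new permutations'' via a round-robin pairing and concede that verifying distinctness is where the bulk of the work lies. The paper closes exactly this gap with one concrete device you are missing. Given a transposition $t=\{i,j\}$ in the current rainbow sequence, replace it by the triple
\[
\hat t(m):=\big(\{i,m\},\ \{i,j\},\ \{j,m\}\big);
\]
this has the \emph{same net effect} on the permutation (the entry at position $m$ makes a round trip) while inserting the two new colors $\{i,m\}$ and $\{j,m\}$. Applying this simultaneously to the perfect matching $t_i=\{2i-1,2i\}$, $i=1,\dots,n/2$, already gives the step $n\to n+1$ with all new colors accounted for; iterating four times with $m=n+1,\dots,n+4$ and then interleaving a shifted copy of $R_4$ on the four new positions gives $n\to n+4$. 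Because the old sequence is only locally inflated at disjoint transpositions and each inserted permutation differs from every old one in the new coordinate(s), distinctness comes essentially for free---no auxiliary invariant of the ``bi-centered'' or caterpillar type is needed here. A side benefit is that only the single base case $n=4$ is required: the step $4\ell\to 4\ell+1$ is one application of the same $\hat t$ trick, so your separate base case $n=5$ and second parallel induction are unnecessary.
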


\begin{proof}
The graph $G_n^\tP$ is bipartite, since the parity changes in each step, so a cycle of length $\binom{n}{2}$ cannot exist when this number is odd, which happens exactly when $\lfloor n/2\rfloor$ is odd.
This proves (i).

To prove (ii) we assume that $\lfloor n/2\rfloor$ is even, i.e., $n=4\ell$ or $n=4\ell+1$ for some integer $\ell\geq 1$.
We prove these cases by induction.
As the graph $G_n^\tP$ is vertex-transitive, it suffices to specify a sequence of $\binom{n}{2}$ transpositions that yields a rainbow cycle.
We refer to such a sequence as a \emph{rainbow sequence for $\Pi_n$}.
A rainbow sequence of transpositions can be applied to any vertex in $G_n^\tP$, yielding a rainbow cycle.
To settle the induction base $\ell=1$, consider the rainbow sequence
\begin{equation*}
R_4:=\big(\{1,2\},\{3,4\},\{2,3\},\{1,4\},\{2,4\},\{1,3\}\big).
\end{equation*}
Applying $R_4$ to the permutation $1234$ yields the rainbow cycle $C_4:=(1234,2134,2143,2413,3412,3214)$ in $G_4^\tP$, where we omit brackets and commas in denoting these single-digit permutations.
Note that applying the transposition $\{1,3\}$ to the last permutation in $C_4$ yields the first one.
This rainbow cycle is depicted in Figure~\ref{fig:perm}~(a).

\begin{figure}[htb]
\begin{center}
\includegraphics{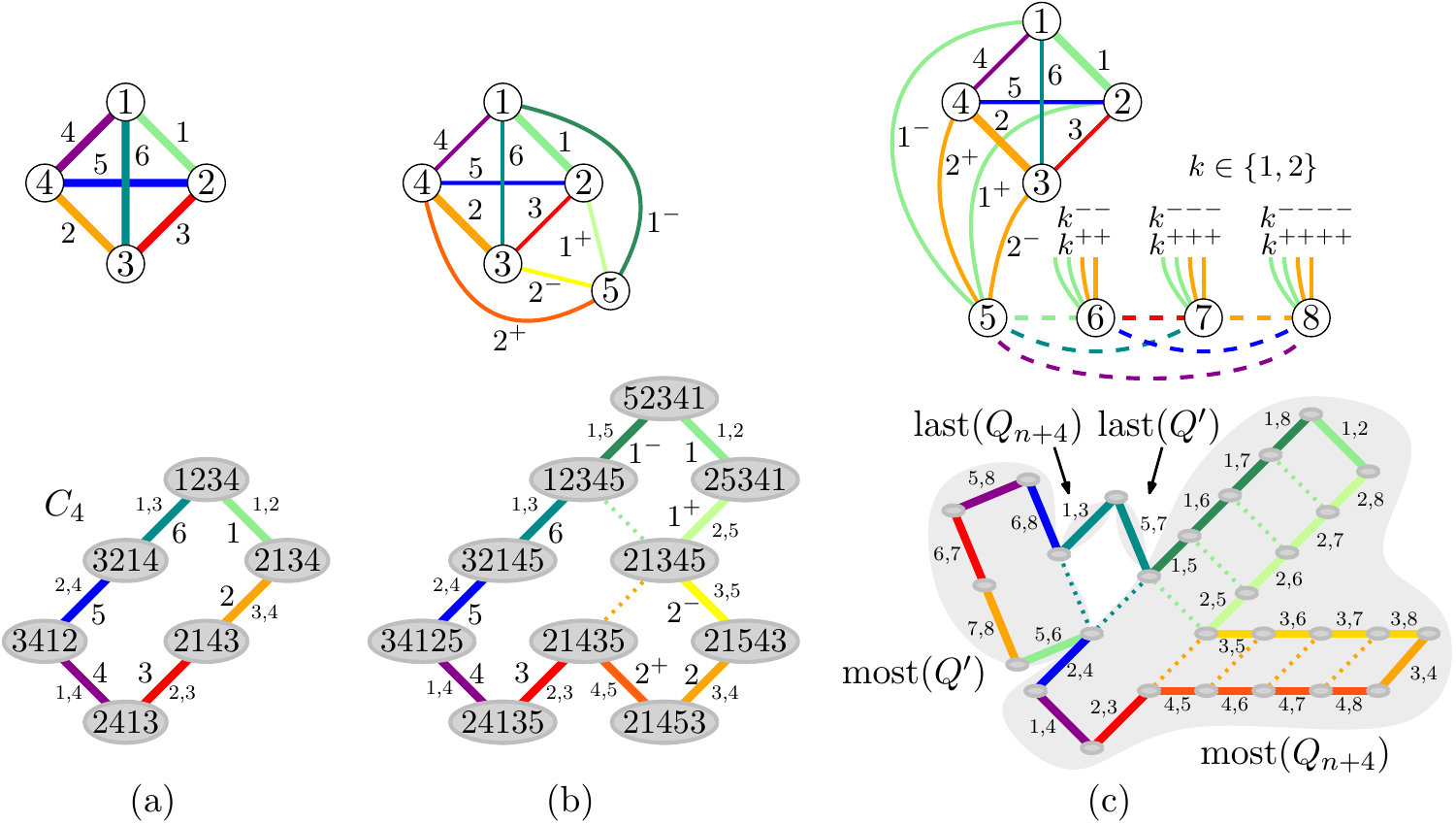}
\caption{Illustration of the proof of Theorem~\ref{thm:perm}:
(a) induction base for $n=4$,\break
(b) induction step $n\rightarrow n+1$,
(c) induction step $n\rightarrow n+4$ (where $n=4\ell$).
The rainbow cycles are shown in the bottom row, the corresponding edge orderings of $K_n$ in the top row.
In the top parts of (b) and (c), the bold edges mark the transpositions $t_i=\{2i-1,2i\}$, $i=1,2,\dots,n/2$, and in the edge orderings the edges labeled with $-$ and $+$ superscripts appear consecutively in the order $k^-,k,k^+$ or $k^{-},k^{--},k^{---},k^{----},k,k^{++++},k^{+++},k^{++},k^{+}$, respectively, for $k\in\{1,2\}$.
Single-digit permutations and transpositions are denoted without brackets and commas.
}
\label{fig:perm}
\end{center}
\end{figure}

We can interpret every transposition in a rainbow sequence for $\Pi_n$ as an edge in $K_n$, yielding an ordering $1,2,\dots,\binom{n}{2}$ of the edges of $K_n$.
In Figure~\ref{fig:perm}, these edge orderings are shown in the top row.
Whether an ordering of the edges of $K_n$ corresponds to a valid rainbow sequence for $\Pi_n$ or not can be decided as follows:
Without loss of generality we start at the identity permutation $(1,2,\dots,n)$, we apply the transpositions given by the edge ordering one after the other, checking that each permutation is encountered at most once and that the final permutation is again the identity permutation.

For the induction step, we assume that we are given a rainbow sequence $R_n$ for $\Pi_n$, $n=4\ell$, and construct rainbow sequences for $\Pi_{n+1}=\Pi_{4\ell+1}$ and for $\Pi_{n+4}=\Pi_{4(\ell+1)}$. 
To this end we consider the effect of replacing a transposition $t=\{i,j\}$, $i<j$, in $R_n$ by the sequence of three transpositions $\that(n+1):=(\{i,n+1\},\{i,j\},\{j,n+1\})$.
Note that the effect of this modification on the entries at positions $i$, $j$ and $n+1$ of the permutation is the same, the only difference is that in the modified sequence also the transpositions $\{i,n+1\}$ and $\{j,n+1\}$ are used.
This simple observation is the key to the following inductive construction.

We first assume that $n=4\ell$, $\ell\geq 1$, and given a rainbow sequence $R_n$ for $\Pi_n$ we construct a rainbow sequence $R_{n+1}$ for $\Pi_{n+1}=\Pi_{4\ell+1}$.
For this we consider the transpositions $t_i:=\{2i-1,2i\}$, $i=1,2,\dots,n/2$, in $R_n$, and to construct $R_{n+1}$ we replace each $t_i$ by the triple $\that_i(n+1)$; see Figure~\ref{fig:perm}~(b).
This yields a sequence $R_{n+1}$ of length $\binom{n}{2}+2\cdot\frac{n}{2}=\binom{n+1}{2}$, and using our previous observation it is easy to check that $R_{n+1}$ is indeed a rainbow sequence for $\Pi_{n+1}$.

We now assume that $n=4\ell$, $\ell\geq 1$, and given a rainbow sequence $R_n$ for $\Pi_n$ we construct a rainbow sequence $R_{n+4}$ for $\Pi_{n+4}=\Pi_{4(\ell+1)}$.
For this we consider the transpositions $t_i:=\{2i-1,2i\}$, $i=1,2,\dots,n/2$, in $R_n$.
We assume without loss of generality that the last transposition in $R_n$ is not one of the $t_i$ (otherwise shift $R_n$ cyclically).
We then define $Q_n:=R_n$ and construct auxiliary sequences $Q_{n+1}$, $Q_{n+2}$, $Q_{n+3}$, $Q_{n+4}$ as follows:
For $k\in\{1,2,3,4\}$, $Q_{n+k}$ is obtained from $Q_{n+k-1}$ by replacing each $t_i$, $i=1,2,\dots,n/2$, in $Q_{n+k-1}$ by the triple $\that_i(n+k)$.
Now let $Q'$ be the sequence of transpositions on the elements $\{n+1,n+2,n+3,n+4\}$ obtained from $R_4$ by adding $n$ to all elements.
The sequence $R_{n+4}$ is constructed by suitably interleaving $Q_{n+4}$ and $Q'$.
Specifically, for any sequence $x=(x_1,x_2,\dots,x_{k+1})$ we define $\most(x):=(x_1,x_2,\dots,x_k)$ and $\last(x):=x_{k+1}$, so $x=(\most(x),\last(x))$.
We then define
\begin{equation*}
R_{n+4}:=\big(\most(Q_{n+4}),\most(Q'),\last(Q_{n+4}),\last(Q')\big) ;
\end{equation*}
see Figure~\ref{fig:perm}~(c).
By construction, the last transposition in $R_n$ is also the last transposition in $Q_{n+4}$.
This yields a sequence $R_{n+4}$ of length $\binom{n}{2}+8\cdot\frac{n}{2}+6=\binom{n+4}{2}$, and using our previous observation it is easy to check that $R_{n+4}$ is indeed a rainbow sequence for $\Pi_{n+4}$.
A straightforward calculation shows that by this interleaving, no permutation is encountered twice along the corresponding cycle.

This completes the proof of (ii).
\end{proof}

\section{Subsets}
\label{sec:comb}

In this section we consider the set of all $k$-element subsets of $[n]$, denoted by $C_{n,k}:=\binom{[n]}{k}$, sometimes called $(n,k)$-combinations.
The graph $G_{n,k}^\tC$ has $C_{n,k}$ as its vertex set, and an edge $\{A,B\}$ between any two sets $A$ and $B$ that differ in exchanging an element $x$ for another element $y$, i.e., $A\setminus B=\{x\}$ and $B\setminus A=\{y\}$; see Figure~\ref{fig:flip}~(d).
We label the edge $\{A,B\}$ of $G_{n,k}^\tC$ with the transposition $A\triangle B=\{x,y\}\in C_{n,2}$, and we think of these labels as colors.
A 1-rainbow cycle in $G_{n,k}^\tC$ is an undirected cycle along which every transposition appears exactly once, so it has length $\binom{n}{2}$.
In this section we only consider 1-rainbow cycles, and we simply refer to them as rainbow cycles.
The number of vertices of $G_{n,k}^\tC$ is clearly $\binom{n}{k}$.
Consequently, a rainbow cycle for $k=2$ is in fact a Hamilton cycle, i.e., a Gray code in the classical sense.
As $G_{n,k}^\tC$ and $G_{n,n-k}^\tC$ are isomorphic, including the edge labels, we will asssume without loss of generality that $k\leq \lfloor n/2\rfloor$.
Also note that for $k=1$, the number of vertices of $G_{n,k}^\tC$ is only $n$, which is strictly smaller than $\binom{n}{2}$ for $n>3$, so we will also assume that $k\geq 2$.

The following theorem summarizes the results of this section.

\begin{theorem}
\label{thm:comb}
Let $n\geq 4$ and $2\leq k\leq \lfloor n/2\rfloor$.
The flip graph of subsets $G_{n,k}^\tC$ has the following properties:
\begin{enumerate}[label=(\roman*)]
\item If $n$ is even, then $G_{n,k}^\tC$ has no rainbow cycle.
\item If $n$ is odd and $k=2$, then $G_{n,2}^\tC$ has a rainbow Hamilton cycle.
\item If $n$ is odd and $k=2$, then $G_{n,2}^\tC$ has two edge-disjoint rainbow Hamilton cycles.
\item If $n$ is odd and $3\leq k<n/3$, then $G_{n,k}^\tC$ has a rainbow cycle.
\end{enumerate}
\end{theorem}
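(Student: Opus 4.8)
For part (i) I would use a parity invariant on element membership. Fix an element $i\in[n]$ and follow its membership in the current subset as we traverse a putative rainbow cycle. By the definition of the edge labels (the label of $\{A,B\}$ is the symmetric difference $A\triangle B$), an edge carries a label containing $i$ exactly when the exchange inserts or deletes $i$, i.e.\ exactly when the membership of $i$ flips. There are precisely $n-1$ pairs containing $i$, and in a rainbow cycle each is used exactly once, so the membership of $i$ flips exactly $n-1$ times. Since the cycle is closed, this membership must return to its starting value, forcing $n-1$ to be even. For even $n$ this fails, so no rainbow cycle exists; note that this argument is independent of $k$.

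For the existence results I would reinterpret $G_{n,k}^\tC$ through a \emph{sliding window}. Read a rainbow cycle as a cyclic sequence $v_0,v_1,\dots,v_{N-1}$ with $N=\binom{n}{2}$ (indices mod $N$) of elements of $[n]$, where the vertex visited at step $i$ is the window $W_i:=\{v_i,v_{i+1},\dots,v_{i+k-1}\}$ and the exchange from $W_i$ to $W_{i+1}$ drops $v_i$ and inserts $v_{i+k}$, so the label is $\{v_i,v_{i+k}\}$. Such a sequence gives a rainbow cycle if and only if (a) every $k+1$ consecutive entries are pairwise distinct (so each $W_i$ is a genuine $k$-set and each step is a genuine exchange), (b) the windows $W_i$ are pairwise distinct (no repeated vertex), and (c) the distance-$k$ pairs $\{v_i,v_{i+k}\}$ run through all of $\binom{[n]}{2}$ exactly once. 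For $k=2$ this is exactly an Eulerian circuit of $K_n$ (with vertices labelled $1,\dots,n$) in which the two non-pivot endpoints at each step form pairwise distinct pairs; such a closed trail exists because $n$ odd makes every degree even. For part (ii) I would write down an explicit rotational Eulerian circuit on $\mathbb{Z}_n$, built from a zigzag difference pattern sweeping through the half-differences $1,\dots,(n-1)/2$, and verify that both the consecutive pairs and the distance-$2$ pairs each enumerate all $\binom{n}{2}$ pairs. Since here $\binom{n}{2}$ equals the number of vertices, this rainbow cycle is automatically Hamiltonian.

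For part (iii) I would produce a second Eulerian circuit that is edge-disjoint from the first in $G_{n,2}^\tC$. An edge of $G_{n,2}^\tC$ corresponds to a $2$-path (a \emph{cherry}) $b\!-\!a\!-\!c$ in $K_n$, so two rainbow Hamilton cycles are edge-disjoint precisely when no cherry is used consecutively by both. The natural device is to apply an automorphism of $K_n$ that is not a symmetry of the first circuit, for instance the linear map $x\mapsto 2x$ on $\mathbb{Z}_n$ (a bijection since $n$ is odd), and to check that the cherry set of the transformed circuit is disjoint from the original. For part (iv) I would build the sliding-window sequence directly for $k\geq 3$, reusing the balanced-difference idea of (ii): arrange the $(n-1)/2$ occurrences of each symbol so that the distance-$k$ pairs sweep out every pair once while every $(k+1)$-window stays injective. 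The hypothesis $k<n/3$ is the quantitative slack the construction needs; it is what lets the windows be kept pairwise distinct and the $(k+1)$-windows injective around the whole cycle (morally, three disjoint $k$-blocks must fit in the cyclic structure).

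I expect the difficulty to lie almost entirely in verifying the two global conditions (b) and (c) for the explicit sequences. Condition (c), that the distance-$k$ pairs form an exact rainbow, and condition (b), that no window repeats, are the delicate parts and require a careful construction-specific accounting, whereas (a) is only a local spacing condition. The cherry-disjointness in (iii) and the precise way $k<n/3$ is consumed in (iv) are the places most likely to demand the heaviest bookkeeping. Part (i), by contrast, is a one-line parity count.
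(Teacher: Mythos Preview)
Your argument for part~(i) is correct and essentially identical to the paper's: the membership of any fixed element flips exactly $n-1$ times along a rainbow cycle, so $n-1$ must be even.

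For parts~(ii)--(iv), however, your sliding-window encoding is genuinely different from the paper's construction, and as stated it does not go through. The paper does \emph{not} slide a window. It builds the cycle from a \emph{rainbow block}: a short sequence $B=(B_1,\dots,B_\ell)$, $\ell=(n-1)/2$, whose $n$ cyclic shifts $\sigma^j(B)$ concatenate to the full cycle. Crucially, every $B_i$ has the form $\{1,b_i\}$ for $k=2$, or $[k-1]\cup\{b_i\}$ for $k\ge 3$: a \emph{fixed} set of $k-1$ elements stays put throughout the block and only one coordinate moves. Consecutive vertices therefore share the same anchored $(k-1)$-set, not an overlapping window; the paper's cycle is provably not a sliding-window cycle.

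The gap in your approach is concrete already at $n=5$, $k=2$. In your model the labels $\{v_i,v_{i+2}\}$ split (since $\gcd(2,\binom{5}{2})=2$) into the edge-sets of two closed walks on the even- and odd-indexed subsequences. For these ten edges to exhaust $K_5$, each walk of length~$5$ must have five distinct edges and hence be a $5$-cycle, so the two subsequences are complementary Hamilton cycles of $K_5$. But an exhaustive check over all rotations and reflections of the (up to relabeling unique) Hamilton decomposition of $K_5$ shows that the interleaved sequence is never a valid walk in $K_5$: for every alignment some $w_i$ coincides with $u_i$ or $u_{i+1}$. Hence no sliding-window rainbow cycle exists for $n=5$, although $G_{5,2}^\tC$ certainly has a rainbow Hamilton cycle. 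Your rotational zigzag fails for the same reason in miniature: with period $\ell$ and $v_{i+\ell}=v_i+1$, the distance-$2$ differences are the cyclic sums $\delta_i+\delta_{i+1}$ of the step differences, and for $\ell=2$ these two sums are equal, so the distance-$2$ pairs cannot realise both half-differences.

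This also undermines your plans for (iii) and (iv), which rest on the same encoding. For (iii) the paper instead observes that reversing the difference sequence $d\mapsto\operatorname{rev}(d)$ of a rainbow block yields a second rainbow block, and verifies by a direct predecessor/successor calculation that $C(d)$ and $C(\operatorname{rev}(d))$ share no edge of $G_{n,2}^\tC$; your proposed automorphism $x\mapsto 2x$ would need a separate cherry-disjointness check that you have not supplied. For (iv) the paper keeps $[k-1]$ fixed in each $B_i$ and reduces the problem to finding a path $(b_1,\dots,b_\ell,k)$ on the vertex set $\{k,\dots,n\}$ whose $\ell$ edge-lengths hit each value in $[\ell]$ exactly once; the hypothesis $k<n/3$ enters precisely as the inequality guaranteeing that a certain edge in a modified zigzag path is long enough to be a diagonal, rather than as a ``three blocks fit'' spacing bound for windows.
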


With the help of a computer we found even more than two edge-disjoint rainbow Hamilton cycles in $G_{n,2}^\tC$ for odd $n$; see Table~\ref{tab:hcs}.
Moreover, we firmly believe the $G_{n,k}^\tC$ also has a rainbow cycle for $n/3\leq k\leq \lfloor n/2\rfloor$, but we are not able to prove this.

\subsection{Proof of Theorem~\ref{thm:comb}~(i)}
We start to show the non-existence of rainbow cycles in the case that $n$ is even.
\begin{proof}[Proof of Theorem~\ref{thm:comb} (i)]
Note that, for a fixed element $x\in[n]$, there are $n-1$ transpositions involving $x$.
If $x$ is in a set along a rainbow cycle and such a transposition is applied, then the next set along the cycle does not contain $x$, and vice versa.
In a rainbow cycle we return to the starting set and use each of these transpositions exactly once, so $n-1$ must be even, or equivalently, $n$ must be odd.
\end{proof}

\subsection{Proof of Theorem~\ref{thm:comb}~(ii)}

For the rest of this section we assume that $n$ is odd, i.e., $n=2\ell+1$ for some integer $\ell\geq 2$.
To prove parts (ii)--(iv) of Theorem~\ref{thm:comb}, we construct rainbow cycles using a \emph{rainbow block}.
To introduce this notion, we need some definitions.
For any set $A\subseteq [n]$ we let $\sigma(A)$ denote the set obtained from $A$ by adding 1 to all elements, modulo $n$ with $\{1,2,\dots,n\}$ as residue class representatives.
Moreover, for any pair $\{x,y\}\in C_{n,2}$, we define $\dist(\{x,y\}):=\min\{y-x,x-y\}\in[\ell]$ where the differences are also taken modulo $n$.

We call a sequence $B=(B_1,B_2,\dots,B_\ell)$ of subsets $B_i\in C_{n,k}$ a \emph{rainbow block} if
\begin{equation}
\label{eq:CB}
C(B):=\big(B,\sigma^1(B),\sigma^2(B),\dots,\sigma^{2\ell}(B)\big)
\end{equation}
is a rainbow cycle in $G_{n,k}^\tC$.
Note that the sequence $C(B)$ has the correct length $\ell\cdot (2\ell+1)=\binom{n}{2}$.
By definition, a rainbow cycle built from a rainbow block is highly symmetric.
In the following proofs we will formulate various sufficient conditions guaranteeing that $B$ is a rainbow block, and construct $B$ such that those conditions are satisfied.

\begin{proof}[Proof of Theorem~\ref{thm:comb}~(ii)]
Let $n=2\ell+1$ for some integer $\ell\geq 2$.
We define a sequence $B=(B_1,B_2,\dots,B_\ell)$ of pairs $B_i\in C_{n,2}$ such that the following conditions hold:
\begin{enumerate}[label=(\alph*),topsep=0mm,leftmargin=7mm]
\item $B_i=\{1,b_i\}$ for $i\in[\ell]$ with $3\leq b_i\leq n$ and $b_1=n$,
\item $\{\dist(B_i)\mid i\in[\ell]\}=[\ell]$, and
\item $\{\dist(B_i\triangle B_{i+1}) \mid i\in[\ell-1]\}\cup \{\dist(B_\ell\triangle \sigma(B_1)\}=[\ell]$.
\end{enumerate}

Figure~\ref{fig:block} shows a sequence $B$ satisfying these conditions for $\ell=8$.

\begin{figure}
\centering
\begin{minipage}[c]{.78\textwidth}
\setlength{\arraycolsep}{0.5mm}
$
\begin{array}{l|ccccccccccccccccc|c|l}
B_i & 1 & 2 & 3 & 4 & 5 & 6 & 7 & 8 & 9 & 10 & 11 & 12 & 13 & 14 & 15 & 16 & 17 & \;\dist(B_i) &\; \dist(B_i\triangle B_{i+1})=|d_i| \\  \hline \relax
B_1 & \times & & & & & & & & & & & & & & & & \times & 1 & \downshift{\qquad 3} \\ \relax
B_2 & \times & & \times & & & & & & & & & & & & & & & 2 & \downshift{\qquad 5} \\ \relax
B_3 & \times & & & & & & & & & & & & & & \times & & & 3 & \downshift{$\qquad 7=\ell-1$} \\ \relax
B_4 & \times & & & & \times & & & & & & & & & & & & & 4 & \downshift{\qquad 1} \\ \relax
B_5 & \times & & & & & \times & & & & & & & & & & & & 5 & \downshift{$\qquad 6=\ell-2$} \\ \relax
B_6 & \times & & & & & & & & & & & \times & & & & & & 6 & \downshift{\qquad 4} \\ \relax
B_7 & \times & & & & & & & \times & & & & & & & & & & 7 & \downshift{\qquad 2} \\ \relax
B_8 & \times & & & & & & & & & \times & & & & & & & & 8 & \downshift{\;\multirow[t]{2}{2cm}{$\dist(B_\ell\triangle \sigma(B_1))\newline \hspace*{5pt}=8=\ell$}} \\ \cline{1-19} \relax
\sigma(B_1) & \times & \times & & & & & & & & & & & & & & & & &\\
& & & & & & & & & & & & & & & & & & &\\
\end{array}
$
\end{minipage}
\begin{minipage}[c]{0.19\textwidth}
\includegraphics{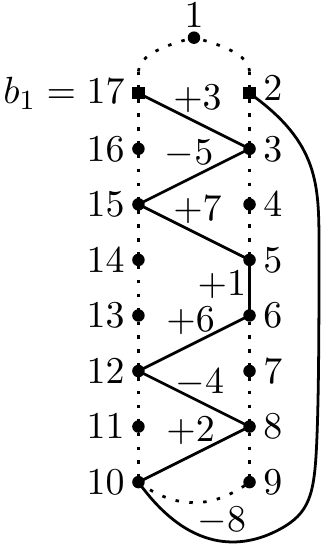}
\end{minipage}
\caption{A rainbow block for $\ell=8$, corresponding to case \eqref{eq:di-meven}.
A cross in row $B_i$ and column $j$ indicates that $j\in B_i$.
On the right hand side, the sequence $(b_1,b_2,\dots,b_\ell,2)$ for this block is depicted as a path.}
\label{fig:block}
\end{figure}

We claim that a sequence $B$ satisfying (a)--(c) is a rainbow block, i.e., the cycle $C=C(B)$ defined in \eqref{eq:CB} is a rainbow cycle.
This can be seen as follows:
By~(a) and \eqref{eq:CB}, any two consecutive sets in $C$ differ in exactly one transposition.
Here we use that 
$B_\ell\triangle \sigma(B_1)=\{1,b_\ell\}\triangle \{1,2\}=\{b_\ell,2\}$ and that $B=\sigma^{2\ell+1}(B)$.
To prove that $C$ is a cycle in $G_{n,2}^\tC$, it remains to argue that all pairs in $C$ are distinct.
Note that for every fixed value of $d\in[\ell]$, there are exactly $n$ different pairs $A\in C_{n,2}$ with $\dist(A)=d$.
Therefore, by~(b) and \eqref{eq:CB}, every pair $A\in C_{n,2}$ appears exactly once in $C$.
We now argue that $C$ is a rainbow cycle, i.e., that every transposition appears exactly once along the cycle $C$. The argument here is very similar.
For every fixed value of $d\in[\ell]$, there are exactly $n$ different transpositions $T\in C_{n,2}$ with $\dist(T)=d$.
Therefore, by (c) and \eqref{eq:CB}, every transposition $T\in C_{n,2}$ appears exactly once along $C$.
This proves that $C$ is indeed a rainbow cycle in $G_{n,2}^\tC$.

It remains to show how to construct a rainbow block satisfying the conditions (a)--(c).
For this it suffices to define values for the elements $b_i$, $i=2,3,\dots,\ell$; recall that $B_i=\{1,b_i\}$ and $b_1=n$.
For even $\ell$ we define
\begin{subequations}
\label{eq:di}
\begin{multline}
\label{eq:di-meven}
  (d_1,d_2,\dots,d_{\ell-1}):= \\
  \begin{cases}
  \big({+}3,-5,+7,\dots,-(\ell-3),+(\ell-1),+1,+(\ell-2),-(\ell-4),\dots,-4,+2\big) & \text{if } \ell\equiv 0\bmod{4} , \\
  \big({+}3,-5,+7,\dots,+(\ell-3),-(\ell-1),-1,-(\ell-2),+(\ell-4),\dots,-4,+2\big) & \text{if } \ell\equiv 2\bmod{4} ,
  \end{cases}
\end{multline}
and for odd $\ell$ we define
\begin{multline}
\label{eq:di-modd}
  (d_1,d_2,\dots,d_{\ell-1}):= \\
  \begin{cases}
  \big({+}3,-5,+7,\dots,+(\ell-2),-\ell,-1,-(\ell-3),+(\ell-5),\dots,+4,-2\big) & \text{if } \ell\equiv 1\bmod{4} , \\
  \big({+}3,-5,+7,\dots,-(\ell-2),+\ell,+1,+(\ell-3),-(\ell-5),\dots,+4,-2\big) & \text{if } \ell\equiv 3\bmod{4} .
  \end{cases}
\end{multline}
\end{subequations}
Using that $b_1=n$, we then define for all $i\in[\ell-1]$
\begin{equation}
\label{eq:bi-rec}
  b_{i+1}:=b_i+d_i=b_1+\sum_{1\leq j\leq i}d_j \enspace \bmod{n}.
\end{equation}

This definition yields a sequence $B=(B_1,B_2,\dots,B_\ell)$ where $B_i=\{1,b_i\}$ for $i\in[\ell]$.
It can be easily verified using \eqref{eq:di} and \eqref{eq:bi-rec} that this sequence satisfies condition~(a), and that $\dist(B_i)=i$ for all $i\in[\ell]$, proving (b).
Moreover, we have
\begin{equation}
\label{eq:bell}
b_\ell=
\begin{cases}
\ell+2 & \text{if $\ell$ is even} , \\
\ell+1 & \text{if $\ell$ is odd} .
\end{cases}
\end{equation}
From these definitions it also follows that $\dist(B_i\triangle B_{i+1})=\dist(\{b_i,b_{i+1}\})=|d_i|$ for all $i\in[\ell-1]$.
For even $\ell$, it therefore follows from \eqref{eq:di-meven} that the set $\{\dist(B_i\triangle B_{i+1})\mid i\in[\ell-1]\}$ contains all numbers $\{1,2,\dots,\ell\}$ except $\ell$.
On the other hand, for odd $\ell$, it follows from \eqref{eq:di-modd} that this set contains all numbers $\{1,2,\dots,\ell\}$ except $\ell-1$.
These missing numbers are contributed by
\begin{equation*}
\dist(B_\ell\triangle \sigma(B_1))=\dist(\{b_\ell,2\}) \eqBy{eq:bell}
\begin{cases}
\ell   & \text{if $\ell$ is even} , \\
\ell-1 & \text{if $\ell$ is odd} ,
\end{cases}
\end{equation*}
so the sequence $B$ indeed satisfies (c).
This proves that $B$ is a rainbow block, so the cycle $C(B)$ defined in \eqref{eq:CB} is a rainbow cycle in $G_{n,2}^\tC$.
\end{proof}

As each pair and each transposition of a rainbow cycle $C$ in $G_{n,2}^\tC$ correspond to an edge of $K_n$, such a rainbow cycle has a nice interpretation as an iterative two-coloring of the edges of $K_n$; see Figure~\ref{fig:mobius}.
One color class are the pairs along the cycle, and the other color class are the transpositions along the cycle.
At each point we consider two consecutive pairs $\{x,y\}$ and $\{x,z\}$ in $C$ and the transposition $\{y,z\}$ between them. 
This means that the edges $\{x,y\}$ and $\{x,z\}$ in $K_n$ corresponding to the pairs share the vertex $x$, and the edge $\{y,z\}$ corresponding to the transposition goes between the other end vertices of these two edges, as depicted on the left hand side of Figure~\ref{fig:mobius}.
The rainbow cycle shown on the right hand side of the figure is $C(B)$ with the rainbow block $B$ as defined in the preceding proof of Theorem~\ref{thm:comb}~(ii) for $n=7$.

\begin{figure}
\begin{minipage}[c]{.19\textwidth}
 \includegraphics{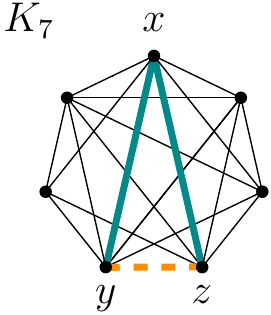}
\end{minipage}
\begin{minipage}[c]{.8\textwidth}
 \begin{tikzpicture}[scale=.8]

\path[draw=black!25!white, fill=black!25!white , line width=20pt, line cap=round, line join=round] (0,2) -- (-1,0) -- (1,0) -- cycle;

\node[anchor=west] at (-0.7,0.5) {$B$};

\node[node_black, label={above:1}] (a1) at (0,2) {};
\node[node_black, label={above:4}] (b1) at (2,2) {};
\node[node_black, label={above:5}] (c1) at (3,2) {};
\node[node_black, label={above:3}] (d1) at (4,2) {};
\node[node_black, label={above:6}] (e1) at (6,2) {};
\node[node_black, label={above:7}] (f1) at (7,2) {};
\node[node_black, label={above:5}] (g1) at (8,2) {};
\node[node_black, label={above:1}] (h1) at (10,2) {};
\node[node_black, label={above:2}] (i1) at (11,2) {};
\node[node_black, label={above:7}] (j1) at (12,2) {};

\node[node_black, label={below:7}] (a2) at (-1,0) {};
\node[node_black, label={below:3}] (b2) at (0,0) {};
\node[node_black, label={below:4}] (c2) at (1,0) {};
\node[node_black, label={below:2}] (d2) at (2,0) {};
\node[node_black, label={below:5}] (e2) at (4,0) {};
\node[node_black, label={below:6}] (f2) at (5,0) {};
\node[node_black, label={below:4}] (g2) at (6,0) {};
\node[node_black, label={below:7}] (h2) at (8,0) {};
\node[node_black, label={below:1}] (i2) at (9,0) {};
\node[node_black, label={below:6}] (j2) at (10,0) {};
\node[node_black, label={below:2}] (k2) at (12,0) {};
\node[node_black, label={below:3}] (l2) at (13,0) {};
\node[node_black, label={below:1}] (m2) at (14,0) {};

\path[line_dashed,dorange] (a1) to (b1);
\path[line_dashed,dorange] (b1) to (c1);
\path[line_dashed,dorange] (c1) to (d1);
\path[line_dashed,dorange] (d1) to (e1);
\path[line_dashed,dorange] (e1) to (f1);
\path[line_dashed,dorange] (f1) to (g1);
\path[line_dashed,dorange] (g1) to (h1);
\path[line_dashed,dorange] (h1) to (i1);
\path[line_dashed,dorange] (i1) to (j1);

\path[line_dashed,dorange] (a2) to (b2);
\path[line_dashed,dorange] (b2) to (c2);
\path[line_dashed,dorange] (c2) to (d2);
\path[line_dashed,dorange] (d2) to (e2);
\path[line_dashed,dorange] (e2) to (f2);
\path[line_dashed,dorange] (f2) to (g2);
\path[line_dashed,dorange] (g2) to (h2);
\path[line_dashed,dorange] (h2) to (i2);
\path[line_dashed,dorange] (i2) to (j2);
\path[line_dashed,dorange] (j2) to (k2);
\path[line_dashed,dorange] (k2) to (l2);
\path[line_dashed,dorange] (l2) to (m2);

\draw[line_dotted] plot [smooth] coordinates {(j1) (14.,1.7) (14.3,-1) (2,-1) (a2)};
\draw[line_dotted] plot [smooth] coordinates {(m2) (14.3,.3) (14.3,3) (2,3) (a1)};

\path[line_solid,dcyan] (a1) to (a2);
\path[line_solid,dcyan] (a1) to (b2);
\path[line_solid,dcyan] (a1) to (c2);
\path[line_solid,dcyan] (a1) to (d2);
\path[line_solid,dcyan] (b1) to (d2);
\path[line_solid,dcyan] (c1) to (d2);
\path[line_solid,dcyan] (d1) to (d2);
\path[line_solid,dcyan] (d1) to (e2);
\path[line_solid,dcyan] (d1) to (f2);
\path[line_solid,dcyan] (d1) to (g2);
\path[line_solid,dcyan] (e1) to (g2);
\path[line_solid,dcyan] (f1) to (g2);
\path[line_solid,dcyan] (g1) to (g2);
\path[line_solid,dcyan] (g1) to (h2);
\path[line_solid,dcyan] (g1) to (i2);
\path[line_solid,dcyan] (g1) to (j2);
\path[line_solid,dcyan] (h1) to (j2);
\path[line_solid,dcyan] (i1) to (j2);
\path[line_solid,dcyan] (j1) to (j2);
\path[line_solid,dcyan] (j1) to (k2);
\path[line_solid,dcyan] (j1) to (l2);
\path[line_dotted] (j1) to (m2);

\end{tikzpicture}
\end{minipage}
\caption{
Interpretation of the rainbow cycle constructed in the proof of Theorem~\ref{thm:comb}~(ii) as an iterative two-coloring of the edges of $K_n$ for $n=7$ ($\ell=3$).
The pairs in the rainbow block $B=(\{1,7\},\{1,3\},\{1,4\})$ are highlighted in gray.
In the drawing on the right, each vertex of $K_n$ appears multiple times; different copies must be identified.
In particular, the leftmost and rightmost edge in the drawing are identified as on a M\"obius strip, indicated by the dotted lines.
Solid edges represent pairs and dashed edges represent transpositions along the rainbow cycle.
The solid edges form a caterpillar, and the dashed edges a Eulerian cycle in $K_n$.}
\label{fig:mobius}
\end{figure} 

\subsection{Proof of Theorem~\ref{thm:comb}~(iii)}

We now extend the method from the proof in the previous section and show that it even yields two edge-disjoint rainbow Hamilton cycles in $G_{n,2}^\tC$.

We call a sequence of numbers $d=(d_1,d_2,\dots,d_\ell)$ with $-\ell\leq d_i\leq \ell$ a \emph{rainbow sequence}, if the sequence $B=(B_1,B_2,\dots,B_\ell)$ with pairs $B_i=\{1,b_i\}$, $i\in[\ell]$, with $b_i$ as defined in \eqref{eq:bi-rec}, satisfies the conditions (a)--(c) in the proof of Theorem~\ref{thm:comb}~(ii) and if $\dist(\{b_\ell,2\})=|d_\ell|$.
The last entry $d_\ell$ of the sequence is determined by the previous entries and by condition~(c).
In particular, we have $\{|d_i| \mid i\in[\ell]\}=[\ell]$.
Recall that the elements $d_i$ of a rainbow sequence are the increments/decrements by which $b_{i+1}$ differs from $b_i$ for $i=1,2,\dots,\ell-1$, and by which $\sigma(1)=2$ differs from $b_\ell$.
In the previous proof we showed that such a rainbow sequence $d$ gives rise to a rainbow cycle $C(B)$.
We let $C(d)$ denote the rainbow Hamilton cycle defined in \eqref{eq:CB} for the rainbow block $B=B(d)$ defined via the rainbow sequence $d$.

\begin{proof}[Proof of Theorem~\ref{thm:comb}~(iii).]
Observe that if $d=(d_1,d_2,\dots,d_\ell)$ is any rainbow sequence, then the reversed sequence $\rev(d):=(d_\ell,d_{\ell-1},\dots,d_1)$ is also a rainbow sequence different from $d$.
In particular, the number of rainbow sequences is always even.

To prove the theorem, we will show that for any rainbow sequence $d$, the rainbow Hamilton cycles $C(d)$ and $C(d')$ with $d':=\rev(d)$ are edge-disjoint cycles in the graph $G_{n,2}^\tC$.
We let $b_i$ and $b_i'$, $i=1,2,\dots,\ell$, denote the values defined in \eqref{eq:bi-rec} for the sequences $d_i$ and $d'_i$, respectively.
As $d'=\rev(d)$ we clearly have
\begin{subequations}
\begin{align}
\begin{alignedat}{2}
d_i &= d'_{\ell-i+1} , \quad & 1 &\leq i \leq \ell , \label{eq:direv} \\
b_i &= n-b'_{\ell-i+2}+2 , \quad & 2 &\leq i \leq \ell .
\end{alignedat}
\end{align}
\end{subequations}

Consider any pair $A\in C_{n,2}$ visited by the cycle $C(d)$, and let $\alpha,\beta\in[n]$ and $i\in[\ell]$ be such that $A=\{\alpha,\beta\}=\sigma^{\alpha-1}(\{1,b_i\})$.
Note that $\alpha$, $\beta$ and $i$ are uniquely determined by \eqref{eq:CB}, and $\alpha$ is not necessarily smaller than $\beta$.
Let $A^-,A^+\in C_{n,2}$ be the pairs that precede and that follow the pair $A$ on the cycle $C(d)$, respectively.
See Figure~\ref{fig:hcs} for an illustration.

\begin{figure}
\begin{center}
\includegraphics[scale=1]{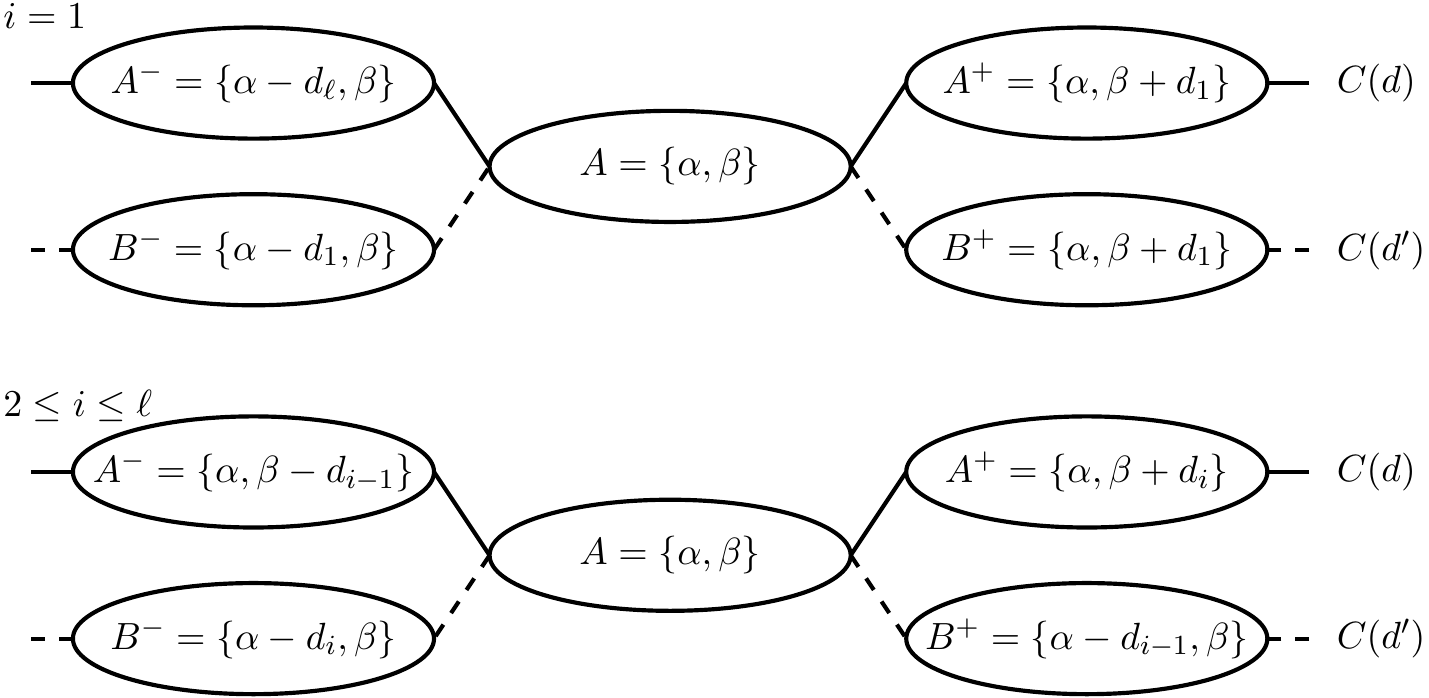}
\caption{Illustration of the proof of Theorem~\ref{thm:comb}~(iii).
The figure shows the predecessors and successors of a pair $A=\{\alpha,\beta\}\in C_{n,2}$ on the cycles $C(d)$ (solid edges) and $C(d')$ (dashed edges) in the cases $i=1$ (top) and $2\leq i\leq \ell$ (bottom).}
\label{fig:hcs}
\end{center}
\end{figure}
\end{proof}

By \eqref{eq:CB} and \eqref{eq:bi-rec} we have
\begin{subequations}\label{eq:A+-}
\begin{align}
A^- &= \begin{cases}
       \{\alpha-d_\ell \bmod{n},\beta\} & \text{if } i=1 , \\
       \{\alpha,\beta-d_{i-1} \bmod{n} \} & \text{if } 2\leq i\leq \ell ,
       \end{cases} \\
A^+ &= \{\alpha,\beta+d_i \bmod{n} \} .
\end{align}
\end{subequations}

We now consider the pairs $B^-$ and $B^+$ that precede and follow the pair $A$ on the cycle $C(d')$, respectively.
By \eqref{eq:CB}, \eqref{eq:bi-rec} and \eqref{eq:direv} we have
\begin{subequations}\label{eq:B+-}
\begin{align}
B^- &= \{\alpha-d'_{\ell-i+1} \bmod{n},\beta\}=\{\alpha-d_i \bmod{n},\beta\} , \\
B^+ &= \begin{cases}
       \{\alpha,\beta+d_\ell \bmod{n}\} & \text{if } i=1 , \\
       \{\alpha+d'_{\ell-i+2} \bmod{n},\beta\}=\{\alpha+d_{i-1} \bmod{n},\beta\} & \text{if } 2\leq i\leq \ell .
       \end{cases}
\end{align}
\end{subequations}
Using that $d_1\neq d_\ell$ it follows immediately from \eqref{eq:A+-} and \eqref{eq:B+-} that the edge sets $\{\{A^-,A\},\{A,A^+\}\}$ and $\{\{B^-,B\},\{B,B^+\}\}$ are disjoint (see Figure~\ref{fig:hcs}), implying that $C(d)$ and $C(d')$ are edge-disjoint cycles, as claimed.
This completes the proof.

With the help of a computer we determined all rainbow sequences $d$ for $\ell=1,2,\dots,7$, as shown in Table~\ref{tab:hcs}, and we computed the maximum number of Hamilton cycles of the form $C(d)$ that are pairwise edge-disjoint.

\begin{table}
\centering
\caption{Number of rainbow sequences $d$ and maximum number of pairwise edge-disjoint rainbow Hamilton cycles of the form $C(d)$ in the graph $G_{n,2}^\tC$, $n=2\ell+1$, for $\ell=1,2,\dots,7$.
For $\ell=6$, rainbow sequences yielding ten edge-disjoint cycles are shown on the right.
Only five sequences are shown explicitly, the other five are obtained by reversal.
For $\ell\in\{1,2,6\}$ (printed in bold), $G_{n,2}^\tC$ has a 2-factorization where all but one 2-factor are a rainbow Hamilton cycle.}
\label{tab:hcs}
\includegraphics{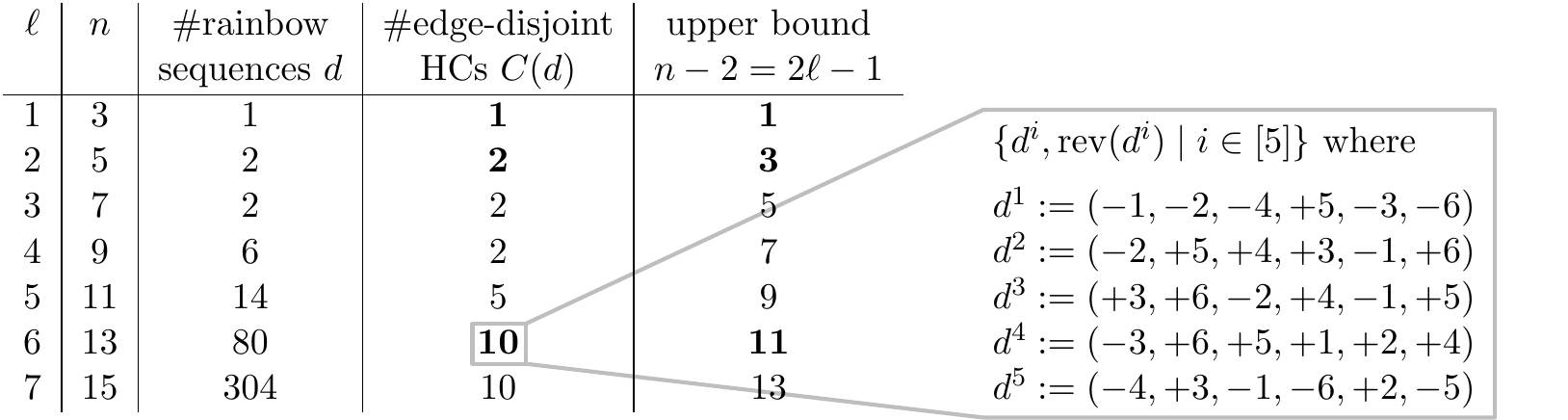}
\end{table}

As the degree of all vertices of $G_{n,2}^\tC$ is $2(n-2)$, the number of edge-disjoint Hamilton cycles in the graph is at most $n-2=2\ell-1$.
If this upper bound is matched, then we obtain a 2-factorization of $G_{n,2}^\tC$ into edge-disjoint rainbow Hamilton cycles.
As the table shows, this happens only in the trivial case $\ell=1$, but it almost happens for $\ell=2$ and $\ell=6$.
In fact, the following theorem shows that $n-3$ edge-disjoint rainbow Hamilton cycles is the maximum we can achieve when considering only cycles of the form $C(d)$ for rainbow sequences $d$.

\begin{theorem}
\label{thm:hcs-ub}
For any $\ell\geq 2$, if $D$ is a set of rainbow sequences such that $C(d)$ and $C(d')$ are edge-disjoint cycles in $G_{n,2}^\tC$, $n=2\ell+1$, for all $d,d'\in D$ with $d\neq d'$, then $|D|\leq n-3$.
\end{theorem}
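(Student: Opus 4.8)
The plan is to sharpen the trivial bound $|D|\le n-2$---which only reflects that every vertex of $G_{n,2}^\tC$ has degree $2(n-2)$---to $|D|\le n-3$, the entire improvement coming from a parity argument localized at one carefully chosen vertex. The structural input is already contained in \eqref{eq:A+-}: for a rainbow sequence $d$ and a pair $A=\{\alpha,\beta\}=\sigma^{\alpha-1}(\{1,b_i\})$ of block index $i$ with $2\le i\le \ell$, both cycle-neighbours $A^-=\{\alpha,\beta-d_{i-1}\}$ and $A^+=\{\alpha,\beta+d_i\}$ retain the element $\alpha$, so \emph{both} edges of $C(d)$ at $A$ change the single element $\beta$. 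Only block index $i=1$ behaves differently, so I will use a pair whose block index is at least $2$ in every cycle. Since $b_1=n$ forces $\dist(B_1)=1$ and the block distances are pairwise distinct by condition~(b), the pair $B_1$ is the unique block pair of distance $1$; hence any pair of distance $2$ occurs at a block index $\ge 2$ in all cycles $C(d)$.

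Now fix one pair $A=\{u,v\}$ with $\dist(A)=2$, which exists because $\ell\ge 2$. For each $d\in D$ the two edges of $C(d)$ at $A$ change one common endpoint $\eta(d)\in\{u,v\}$, giving a partition $D=D_u\sqcup D_v$ with $D_u=\{d:\eta(d)=u\}$ and $D_v=\{d:\eta(d)=v\}$. There are exactly $n-2$ edges incident with $A$ that change $u$, namely those to $\{w,v\}$ for $w\in[n]\setminus\{u,v\}$. Each $d\in D_u$ uses two of them, and since the cycles $C(d)$ are pairwise edge-disjoint these $2\,|D_u|$ edges are distinct; thus $2\,|D_u|\le n-2=2\ell-1$. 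As the left-hand side is even and $2\ell-1$ is odd, $|D_u|\le \ell-1$, and symmetrically $|D_v|\le \ell-1$, so $|D|=|D_u|+|D_v|\le 2\ell-2=n-3$.

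The conceptual heart---and the only step needing real care---is the observation that at every pair of distance at least $2$ the two incident cycle edges modify the \emph{same} element; everything else is bookkeeping. Given that, the gain of exactly one over the degree bound is forced by the parity of $n-2$ (odd precisely because $n$ is odd) together with the fact that a cycle $C(d)$ always uses an even number---two---of same-type edges at the chosen vertex.
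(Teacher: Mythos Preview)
Your proof is correct and takes a genuinely different route from the paper's.

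The paper argues by a one-line pigeonhole at the common vertex $B_1=\{1,n\}$: every cycle $C(d)$ has as its first edge $\{B_1,B_2\}$ with $B_2=\{1,b_2\}$, and conditions~(a) and~(c) force $b_2\in\{3,\dots,n-1\}$, leaving only $n-3$ candidates for that single edge; edge-disjointness finishes immediately.

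You instead work at a vertex $A$ of distance~$2$, where the block index is $\ge 2$ in every $C(d)$ and hence both incident cycle edges alter the \emph{same} endpoint of $A$. Splitting $D$ according to which endpoint is altered, the odd parity of $n-2$ caps each half at $\ell-1$, giving $|D|\le 2\ell-2=n-3$. This is sound; the structural observation that at block index $\ge 2$ both neighbours of $A$ retain $\alpha$ (also in the boundary case $i=\ell$, where $A^+=\sigma^{\alpha}(B_1)=\{\alpha,\alpha+1\}$) is exactly what is needed. The paper's argument is shorter, extracting the unit of improvement from the single excluded value $b_2=2$; yours recovers the same unit from parity, and has the minor advantage that it does not invoke the lower bound $b_i\ge 3$ of condition~(a), only $b_1=n$ together with condition~(b).
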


\begin{proof}
The first pair $B_1$ in a rainbow block is the same $B_1=\{1,b_1\}$, $b_1=n$, for all rainbow sequences $d\in D$, recall condition~(a) from the proof of Theorem~\ref{thm:comb}~(ii).
Moreover, the second pair $B_2$ has the form $\{1,b_2\}$ with $3\leq b_2\leq n$.
By condition~(c) we must have $b_2\neq b_1$, so $3\leq b_2\leq n-1$.
This leaves only $n-3$ different possible values for $b_2$, and the different rainbow sequences $d\in D$ must all have different values $b_2$, otherwise the corresponding cycles $C(d)$ would have the first edge in common.
\end{proof}

Note that even relaxing condition~(a) to $2\leq b_i\leq n$ in the proof of Theorem~\ref{thm:comb}~(ii) does not change the conclusion of Theorem~\ref{thm:hcs-ub}, as the value $b_i=2$ is also forbidden by condition~(b).
However, Theorem~\ref{thm:hcs-ub} does not rule out the existence of a 2-factorization of $G_{n,2}^\tC$ into rainbow Hamilton cycles that are \emph{not} of the form $C(d)$ for some rainbow sequence $d$.

\subsection{Proof of Theorem~\ref{thm:comb}~(iv)}

In this section we describe rainbow cycles in $G_{n,k}^\tC$ for $k\geq 3$.

\begin{proof}[Proof of Theorem~\ref{thm:comb}~(iv).]
Let $n=2\ell+1$ for some integer $\ell\geq 2$.

As before, we construct a rainbow cycle in $G_{n,k}^\tC$ using a rainbow block.
Specifically, we will define a sequence $B=(B_1,B_2,\dots,B_\ell)$ of subsets $B_i\in C_{n,k}$ such that the following conditions hold:
\begin{enumerate}[label=(\alph*),topsep=0mm,leftmargin=7mm]
\item $B_i=[k-1]\cup\{b_i\}$ for $i\in[\ell]$ with $k+1\leq b_i\leq n$ and $b_1=n$,
\item the numbers $b_1,b_2,\dots,b_\ell$ are all distinct, and 
\item $\{\dist(B_i\triangle B_{i+1}) \mid i\in[\ell-1]\}\cup \{\dist(B_\ell\triangle \sigma(B_1))\}=[\ell]$.
\end{enumerate}

We claim that a sequence $B$ satisfying (a)--(c) is a rainbow block, i.e., the cycle $C=C(B)$ defined in \eqref{eq:CB} is a rainbow cycle.
The proof of this fact is very much analogous to the argument in the proof of part~(ii) of the theorem, so we omit it here.

An example of a rainbow block for $\ell=8$ and $k=4$ is shown in Figure~\ref{fig:rblock4}.

\begin{figure}
\centering
\begin{minipage}[c]{.68\textwidth}
\setlength{\arraycolsep}{1.0mm}
$
\begin{array}{l|ccccccccccccccccc|c}
B_i & 1 & 2 & 3 & 4 & 5 & 6 & 7 & 8 & 9 & 10 & 11 & 12 & 13 & 14 & 15 & 16 & 17 & d_i \\  \hline \relax
B_1 & \times & \times & \times & & & & & & & & & & & & & & \times & \downshift{5} \\ \relax
B_2 & \times & \times & \times & & \times & & & & & & & & & & & & & \downshift{1} \\ \relax
B_3 & \times & \times & \times & & & \times & & & & & & & & & & & & \downshift{8} \\ \relax
B_4 & \times & \times & \times & & & & & & & & & & & \times & & & & \downshift{6} \\ \relax
B_5 & \times & \times & \times & & & & & \times & & & & & & & & & & \downshift{4} \\ \relax
B_6 & \times & \times & \times & & & & & & & & & \times & & & & & & \downshift{3} \\ \relax
B_7 & \times & \times & \times & & & & & & \times & & & & & & & & & \downshift{2} \\ \relax
B_8 & \times & \times & \times & & & & & & & & \times & & & & & & & \downshift{7} \\ \cline{1-18} \relax
\sigma(B_1) & \times & \times & \times & \times & & & & & & & & & & & & & &
\end{array}
$
\end{minipage}
\begin{minipage}[c]{0.19\textwidth}
\includegraphics{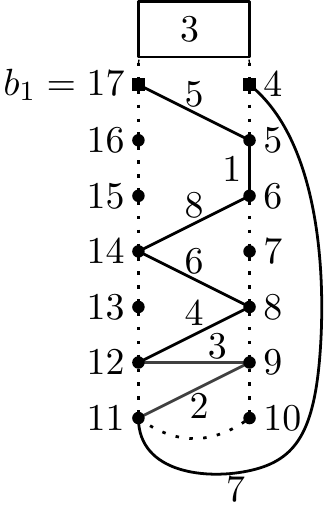}
\end{minipage}
\caption{A rainbow block for $\ell=8$ and $k=4$.
On the right hand side, the sequence $(b_1,b_2,\dots,b_\ell,k)$ for this block is depicted as a path starting with $b_1 = 17$.
The numbers $d_i$ are the edge lengths along the path. 
}
\label{fig:rblock4}
\end{figure}

We interpret a sequence $B$ satisfying these conditions as a path on the vertex set $[n]$ as follows.
Given an edge $\{x,y\}$ of the path, we refer to the quantity $\dist(\{x,y\})\in[\ell]$ as the \emph{length} of this edge.
We say that an edge is \emph{short} if its length is at most~$k-1$, and it is \emph{long} if its length is at least~$k$.

Given a sequence $B$ satisfying the conditions (a)--(c), then the sequence $(b_1,b_2,\dots,b_\ell,k)$ is a simple path of length $\ell$ on the vertex set $[k,n]:=\{k,k+1,\dots,n\}\subseteq [n]$ that starts at the vertex $b_1=n$, ends at the vertex $k$, and that has the property that along the path every edge length from the set~$[\ell]$ appears exactly once; see the right hand side of Figure~\ref{fig:rblock4}.
We refer to such a path as a \emph{rainbow path}.

The following definitions are illustrated in Figure~\ref{fig:zigzag}.

\begin{figure}
\centering
\includegraphics{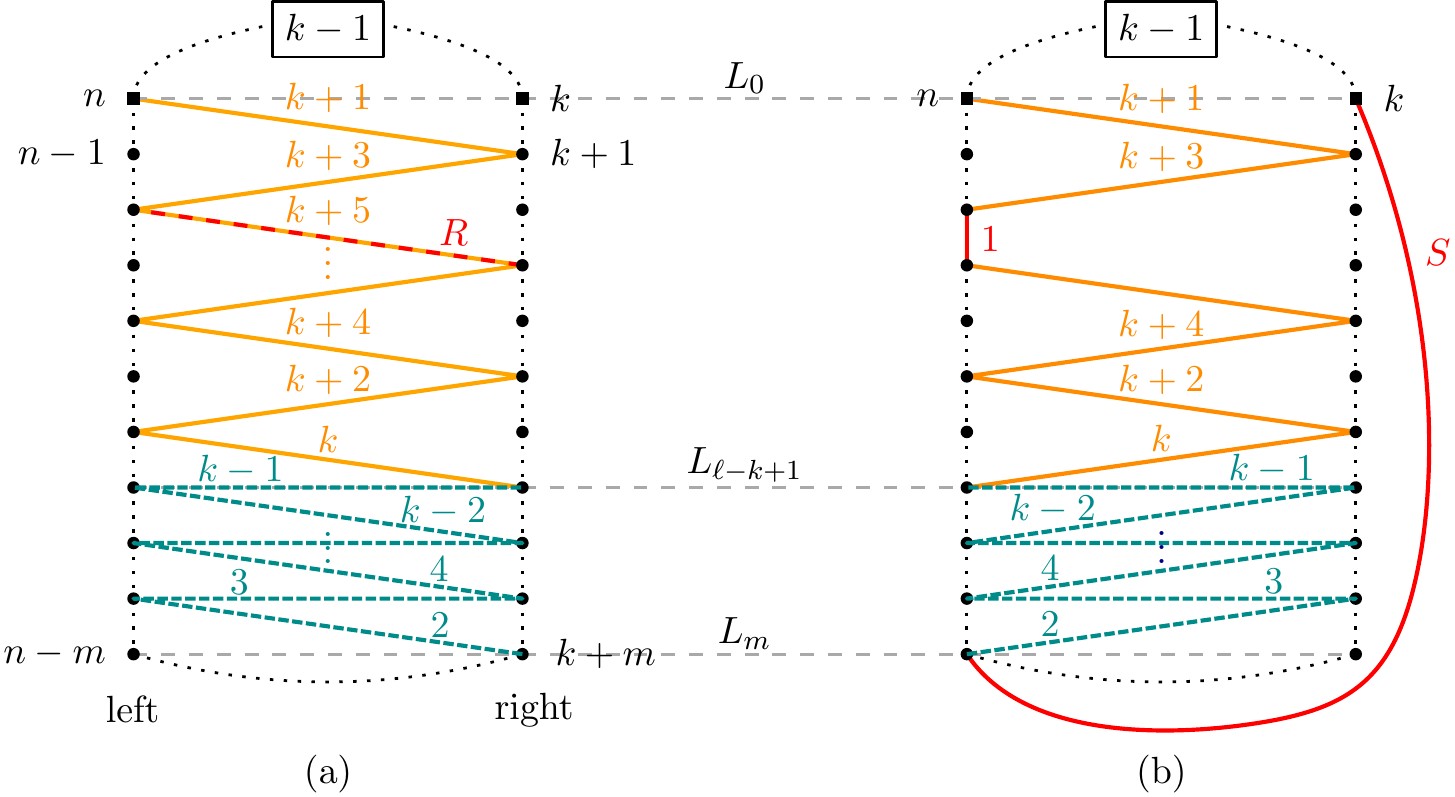}
\caption{(a) A zigzag path for even $\ell$ and even $k$ ($\ell=14$, $k=8$).
The long edges are drawn with bold solid lines, the short edges with bold dashed lines.\newline
(b) A modified zigzag path ($R=k+5=13$ and $S=11$; this is not a rainbow path).}
\label{fig:zigzag}
\end{figure}
We define $L_i:=\{n-i,k+i\}$ and we say that the vertices in $L_i$ belong to \emph{level $i$} where $i=0,1,\dots,m$ and $m:=\ell-\lceil k/2\rceil$.
We refer to the edge $\{n-i,k+i\}$ as a \emph{level edge}, and to any edge of the form $\{i,i+1\}$ as a \emph{cycle edge}.
For any two consecutive levels $L_i$ and $L_{i+1}$, we call the edges $\{n-i,k+i+1\}$ and $\{k+i,n-i-1\}$ \emph{diagonal edges} between these two levels.
Note that these two diagonal edges have the same length.
We refer to the vertices $\{n-i\mid i=0,1,\dots,m\}$ and $\{k+i\mid i=0,1,\dots,m\}$ as \emph{left} and \emph{right vertices}, respectively.
Note that every level contains exactly two vertices, a left and a right vertex.
For odd $k$, there is a unique vertex $(k+m+1)\in [k,n]$ which is not assigned to any level, and which is neither a left nor a right vertex.

\begin{figure}
\centering
\includegraphics{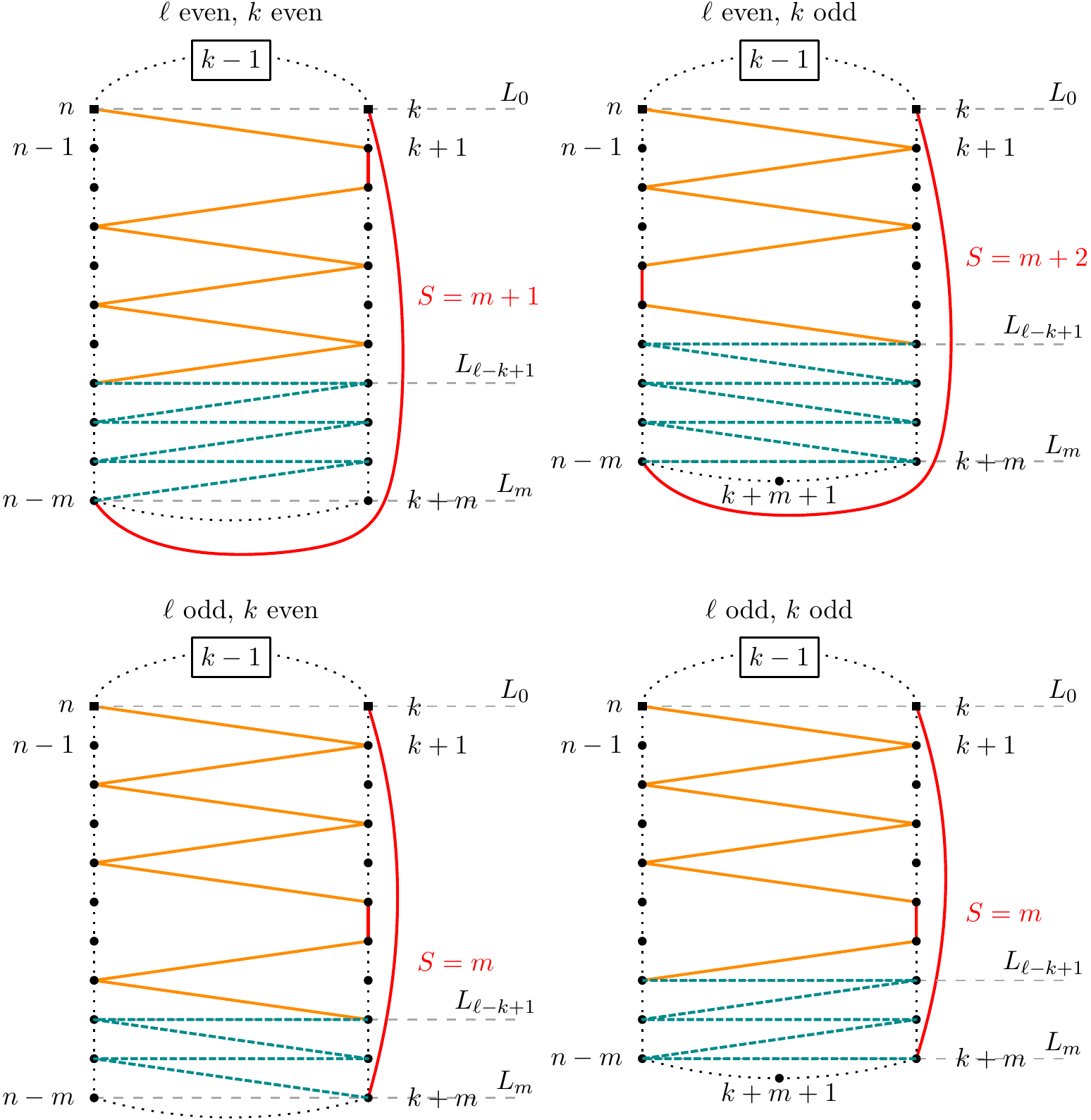}
\caption{Rainbow paths obtained from our construction for all four cases of even/odd $\ell$ and $k$ (specifically, these examples are for $\ell=14$ and $k\in\{8,9\}$, and for $\ell=13$ and $k\in\{6,7\}$).}
\label{fig:zigzag-cases}
\end{figure}

In our figures, we display the vertices in $[k-1]=[n]\setminus [k,n]$ in a box at the top, and the vertices in levels $L_0,L_1,\dots,L_m$ from top to bottom with vertices in the same level drawn at the same vertical position, all left vertices are drawn to the left of all right vertices.
The vertex $(k+m+1)$ for odd $k$ is shown below the level $L_m$.

We construct a rainbow path in two steps; see Figure~\ref{fig:zigzag}.
We first define a \emph{zigzag path}.
This is a path of length $\ell-1$ that starts at the vertex $n$ and ends at a vertex in the last level $L_m$.
The path first alternates between left and right vertices in the levels $L_0,L_1,\dots,L_{\ell-k+1}$ along diagonal edges, and it then alternates between left and right vertices in the levels $L_{\ell-k+1},\dots,L_m$ along level edges and diagonal edges alternatingly, starting with a level edge.

Observe that in the first part the zigzag path uses long edges, namely every length from the set $\{k,k+1,\dots,\ell\}$ exactly once.
Note that as $n=2\ell+1$ is odd, the parity of the edge lengths changes after using the longest or second-longest edge.
In the second part the zigzag path uses short edges, namely every length from the set $\{2,3,\dots,k-1\}$ exactly once.
Note also that a zigzag path ends at a right vertex if $\ell$ is even and at a left vertex if $\ell$ is odd.

We now modify the zigzag path as follows; see Figures~\ref{fig:zigzag} and \ref{fig:zigzag-cases}.
We replace one diagonal edge of some length $R$ by a cycle edge of length 1 between the same two levels, and exchange each vertex after this edge for the other vertex in the same level (in our figures, this corresponds to mirroring the second part of the path after the modification at a vertical line).
In addition, we extend the resulting path with one additional edge, making it a path of length $\ell$, with an edge of length $S$ that leads from the new end vertex in level $L_m$ to the vertex $k$.
It is easy to check that
\begin{equation*}
 S=\begin{cases}
    m & \text{if $\ell$ is odd}, \\
    m+1 & \text{if $\ell$ is even and $k$ is even}, \\
    m+2 & \text{if $\ell$ is even and $k$ is odd}; \\
   \end{cases}
\end{equation*}
see Figure~\ref{fig:zigzag-cases}.
In any case we have $S\geq m$.
Observe that if the edge of length $S$ in a zigzag path is a diagonal, then choosing $R:=S$ in this construction yields a rainbow path, as every edge length from the set $[\ell]$ is used exactly once.

As all long edges along a zigzag path are diagonal edges, requiring that $S\geq k$ is enough to ensure that the edge of length $S$ is a diagonal edge.
This condition is satisfied, as the last inequality in the estimate
\begin{equation*}
S\geq m = \ell-\lceil k/2\rceil \geq \ell-k/2-1/2 \stackrel{!}{>} k-1
\end{equation*}
is equivalent to our assumption $k<n/3=(2\ell+1)/3$.
This completes the proof.
\end{proof}

We remark that there are values of $n$ and $k$ such there is no rainbow cycle of the form $C(B)$ with a rainbow block $B$ satisfying the conditions~(a)--(c) from the previous proof.
For instance, there is no rainbow block of this form for $\ell=k=4$ and $\ell=k=8$.
However, the slightly more general rainbow blocks shown in Figure~\ref{fig:rblock44} and Figure~\ref{fig:rblock88} work in these cases.
These blocks do not satisfy conditions~(a) and (b) stated in the proof, but they satisfy condition~(c).

\begin{figure}[h]
\centering
\setlength{\arraycolsep}{1.0mm}
$
\begin{array}{l|ccccccccc|c}
B_i         & 1 & 2 & 3 & 4 & 5 & 6 & 7 & 8 & 9 & \\ \hline \relax
B_1         & \times & \times & \times &        &        &        &        & & \times & \downshift{4} \\ \relax
B_2         & \times & \times &        &        &        &        & \times & & \times & \downshift{3} \\ \relax
B_3         & \times & \times & \times &        &        &        & \times & &        & \downshift{2} \\ \relax
B_4         & \times & \times & \times &        & \times &        &        & &        & \downshift{1} \\ \cline{1-10} \relax
\sigma(B_1) & \times & \times & \times & \times &        &        &        & &        &
\end{array}
$
\caption{A rainbow block for $\ell=k=4$. }
\label{fig:rblock44}
\end{figure}

\begin{figure}[h]
\centering
\setlength{\arraycolsep}{1.0mm}
$
\begin{array}{l|ccccccccccccccccc|c}
B_i         & 1 & 2 & 3 & 4 & 5 & 6 & 7 & 8 & 9 & 10 & 11 & 12 & 13 & 14 & 15 & 16 & 17 & \\ \hline \relax
B_1         & \times & \times & \times & \times & \times & \times & \times & & & & & & & & & & \times & \downshift{8} \\ \relax
B_2         & \times & \times & \times & \times & \times & \times & & & & & & & & & \times & & \times & \downshift{7} \\ \relax
B_3         & \times & \times & \times & \times & \times & \times & \times & & & & & & & & \times & & & \downshift{6} \\ \relax
B_4         & \times & \times & \times & \times & \times & \times & \times & & \times & & & & & & & & & \downshift{5} \\ \relax
B_5         & \times & \times & \times & \times & \times & \times & \times & & & & & & & \times & & & & \downshift{3} \\ \relax
B_6         & \times & \times & \times & \times & \times & \times & \times & & & & \times & & & & & & & \downshift{2} \\ \relax
B_7         & \times & \times & \times & \times & \times & \times & \times & & & & & & \times & & & & & \downshift{1} \\ \relax
B_8         & \times & \times & \times & \times & \times & \times & \times & & & & & \times & & & & & & \downshift{4} \\ \cline{1-18} \relax
\sigma(B_1) & \times & \times & \times & \times & \times & \times & \times & \times & & & & & & & & & &
\end{array}
$
\caption{A rainbow block for $\ell=k=8$. }
\label{fig:rblock88}
\end{figure}

\section{Open problems}
\label{sec:open}

For all the combinatorial classes considered in this paper, it would be very interesting to exhibit $r$-rainbow cycles for larger values of $r$ (recall Table~\ref{tab:results}), in particular for the flip graphs of permutations and subsets.
Another natural next step is to investigate rainbow cycles in other flip graphs, e.g., for non-crossing partitions of a convex point set or for dissections of a convex polygon (see~\cite{MR2510231}).

We believe that the flip graph of non-crossing perfect matchings $G_m^\tM$ has no 1-rainbow cycle for any $m\geq 5$.
This is open for the even values of $m\geq 12$.
Moreover, the subgraph $H_m$ of $G_m^\tM$ restricted to centered flips (see Figure~\ref{fig:HM6}) is a very natural combinatorial object with many interesting properties that deserve further investigation.
What is the number of connected components of $H_m$ and what is their size?
Which components are trees and which components contain cycles?
As a starting point, it would be very nice to prove the conjectured formula \eqref{eq:Mmc-size} for the number of matchings with a certain weight, and to understand the connection to the generalized Narayana numbers.

We conjecture that the flip graph of subsets $G_{n,k}^\tC$ has a 1-rainbow cycle for all $2\leq k\leq n-2$.
This is open for $n/3\leq k\leq 2n/3$.
In view of Theorem~\ref{thm:comb}~(iii) we ask: does $G_{n,2}^\tC$ have a factorization into $n-2$ edge-disjoint rainbow Hamilton cycles?

\section{Acknowledgements}

We thank Manfred Scheucher for his quick assistance in running computer experiments that helped us to find rainbow cycles in small flip graphs.

\bibliographystyle{alpha}
\bibliography{refs}

\appendix 
\section{Proof of Proposition~\ref{prop:trees-small}}

\begin{proof}[Proof of Proposition~\ref{prop:trees-small}]
For $n=4$ points there are two possible configurations to consider: either all four points are in convex position, or three points are on the convex hull and one is in the interior.
Solutions for both configurations are shown in Figure~\ref{fig:trees-n4r2}.

For $n=5$ points we have to consider three different configurations, and in each case we have to construct $r$-rainbow cycles for each $r\in\{2,3,4\}$.
Even though the arguments in the proofs of Proposition~\ref{prop:trees2r} and \ref{prop:trees2r-1} do not work for $n = 5$, we can still use the described constructions with some minor modifications.
Figure~\ref{fig:trees-50} shows such $r$-rainbow cycles for five points in convex position.
Figure~\ref{fig:trees-51} covers the case that four points are on the convex hull and one point is in the interior.
Figure~\ref{fig:trees-52} covers the case that three points are on the convex hull and two points are in the interior.
\end{proof}

\begin{figure}[h]
\centering
\includegraphics{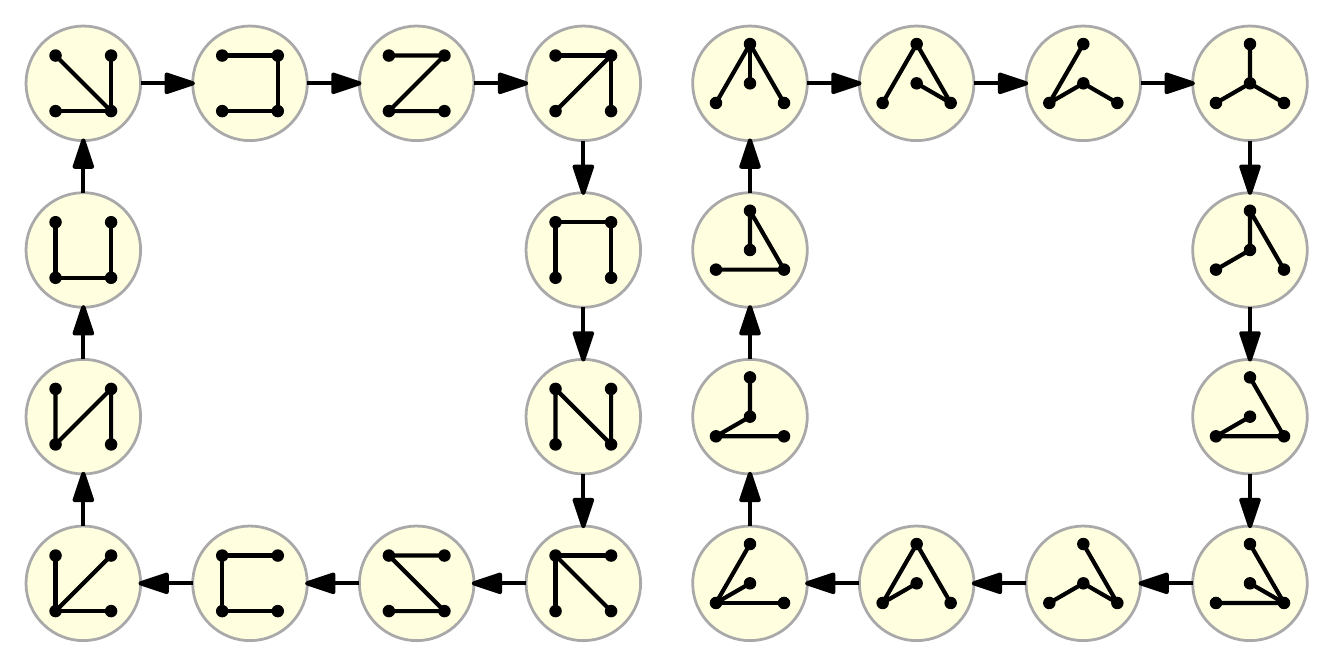}
\caption{The figure shows $2$-rainbow cycles for both configurations of $n=4$ points.}
\label{fig:trees-n4r2}
\end{figure}

\begin{figure}
\centering
\includegraphics{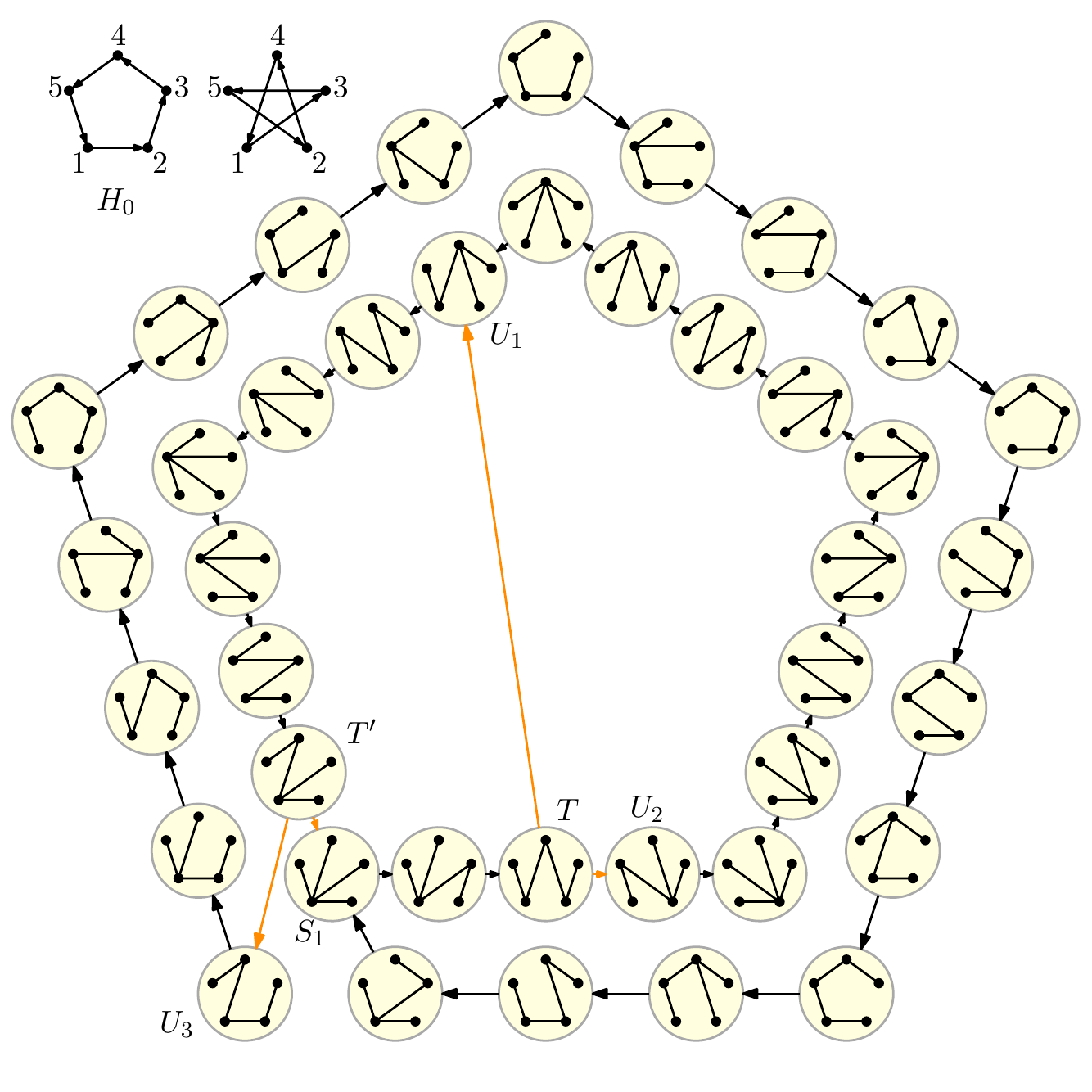}
\caption{Illustration of $1$-, $2$-, $3$-, and $4$-rainbow cycles in $G_X^\tS$, where $X$ is a set of $n=5$ points in convex position.
All four cycles contain $S_1$, but they use different arcs from $T$ and $T'$.
The $1$-rainbow cycle uses the arcs $(T, U_1)$ and $(T', S_1)$.
The $2$-rainbow cycle uses the arcs $(T, U_2)$ and $(T', S_1)$.
The $3$-rainbow cycle uses the arcs $(T, U_1)$ and $(T', U_3)$.
Finally, the $4$-rainbow cycle uses the arcs $(T, U_2)$ and $(T',U_3)$.\\
The Hamilton cycles in $K_X$ and their orientation used in the construction as in the proofs of Proposition~\ref{prop:trees2r} and \ref{prop:trees2r-1} are shown in the upper left corner.}
\label{fig:trees-50}
\end{figure}

\begin{figure}
\centering
\includegraphics{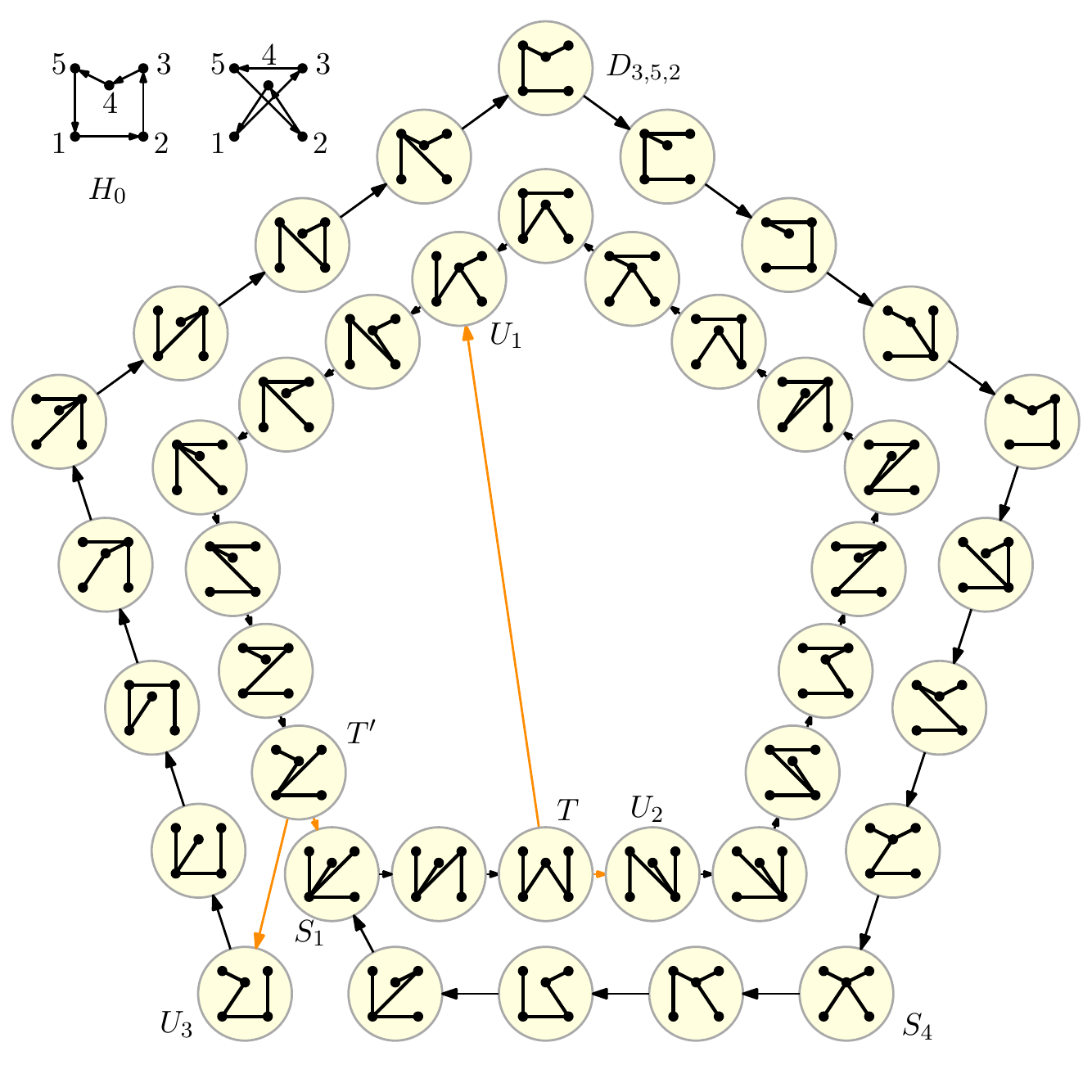}
\caption{
Illustration of $1$-, $2$-, $3$-, and $4$-rainbow cycles in $G_X^\tS$, where $X$ is a set of $n=5$ points with exactly four points on the convex hull.
All four cycles contain $S_1$, but they use different arcs from $T$ and $T'$.
The $1$-rainbow cycle uses the arcs $(T, U_1)$ and $(T', S_1)$.
The $2$-rainbow cycle uses the arcs $(T, U_2)$ and $(T', S_1)$.
The $3$-rainbow cycle uses the arcs $(T, U_1)$ and $(T', U_3)$.
Finally, the $4$-rainbow cycle uses the arcs $(T, U_2)$ and $(T', U_3)$.
Note that in contrast to the construction in the proof of Proposition~\ref{prop:trees2r}, we replaced $D_{2,4,1}$ at the bottom right by $S_4$, since $D_{2,4,1} = D_{3,5,2}$, and $S_4$ does not occur anywhere else.}
\label{fig:trees-51}
\end{figure}

\begin{figure}
\centering
\includegraphics{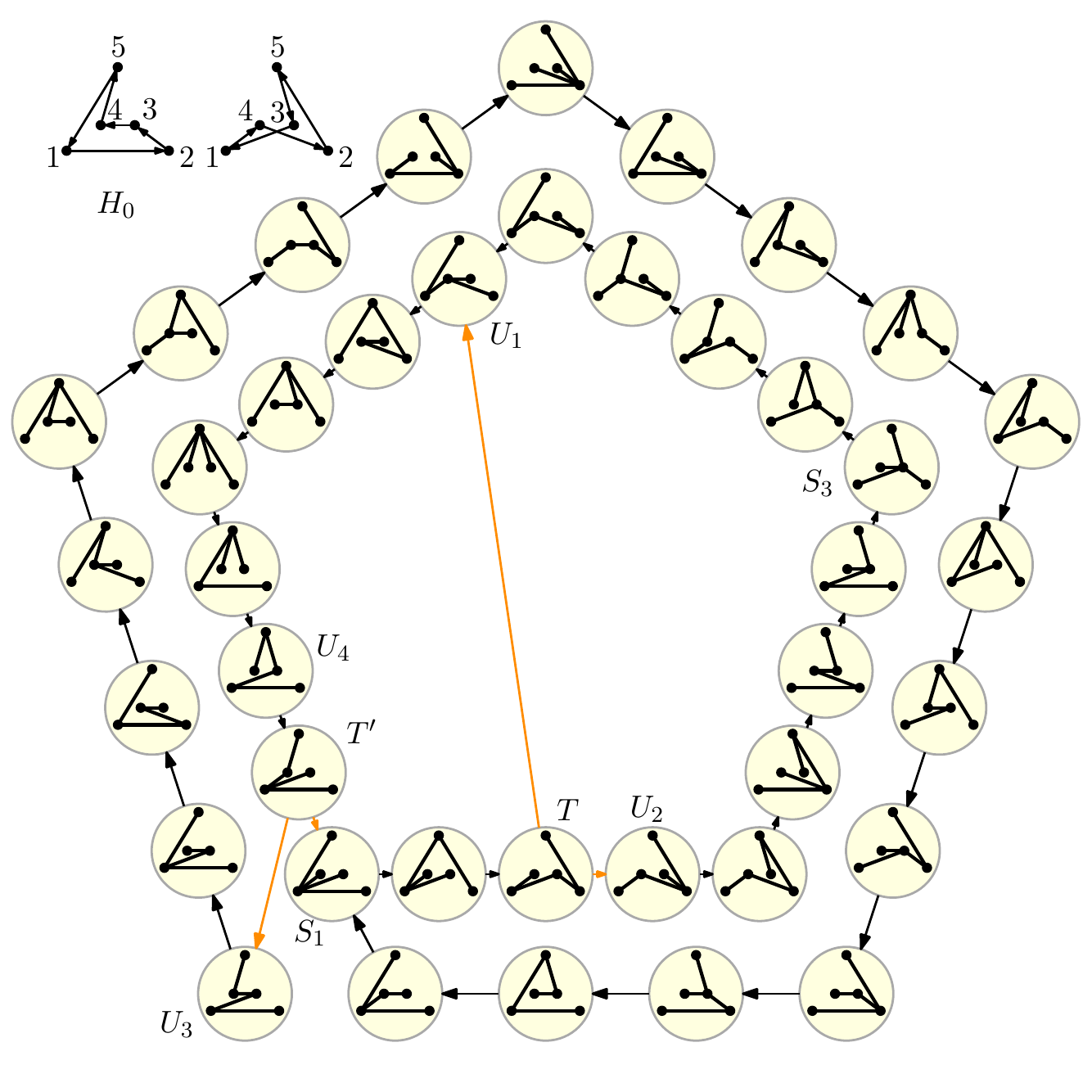}
\caption{Illustration of $1$-, $2$-, $3$-, and $4$-rainbow cycles in $G_X^\tS$, where $X$ is a set of $n=5$ points with exactly three points on the convex hull.
All cycles contain $S_1$, but they use different arcs from $T$ and $T'$.
The $1$-rainbow cycle uses the arcs $(T, U_1)$ and $(T', S_1)$.
The $2$-rainbow cycle uses the arcs $(T, U_2)$ and $(T', S_1)$.
The $3$-rainbow cycle uses the arcs $(T, U_1)$ and $(T', U_3)$.
Finally, the $4$-rainbow cycle uses the arcs $(T, U_2)$ and $(T', U_3)$.
Note that $D_{2,3,4}$ would coincide with the tree $U_4$ and is therefore replaced by $S_3$, which does not occur anywhere else.}
\label{fig:trees-52}
\end{figure}

\end{document}